\newtheorem{prop}{Proposition}
\newtheorem{lem}{Lemma}
\newtheorem{cor}{Corollary}
\newtheorem{ass}{Assumption}
\def \R {\mathbb{R}}
\def \v {\mathbf{v}}
\def \x {\mathbf{x}}
\def \E {\mathrm{E}}
\def \x {\mathbf{x}}
\def \b {\mathbf{b}}
\def \d {\mathbf{d}}
\def \z {\mathbf{z}}
\def \s {\mathbf{s}}
\def \y {\mathbf{y}}
\def \g {\mathbf{g}}
\def \xh {\widehat{\x}}
\def \U {\mathbf U}
\def \y {\mathbf{y}}
\def \E {\mathrm{E}}
\def \x {\mathbf{x}}
\def \g {\mathbf{g}}
\def \z {\mathbf{z}}
\def \R {\mathbb{R}}
\def \v {\mathbf{v}}
\def \d {\mathbf{d}}
\def \b {\mathbf{b}}
\def \s {\mathbf{s}}
\def \xh {\widehat{\x}}
\begin{document}

\title[Noisy Negative Curvature Descent Competing with Gradient Descent]{On Noisy Negative Curvature Descent: Competing with Gradient Descent for Faster Non-convex Optimization}
 \author{\Name{Mingrui Liu}\Email{mingrui-liu@uiowa.edu}\\
\Name{Tianbao Yang}\Email{tianbao-yang@uiowa.edu}\\
   \addr Department of Computer Science, The University of Iowa, Iowa City, IA 52242 \\
}

\maketitle
\vspace*{-0.5in}
\begin{center}{First version: September 25, 2017}\end{center}

\begin{abstract}
The Hessian-vector product has been utilized to find a second-order stationary solution with strong complexity guarantee (e.g., almost linear time complexity in the problem's dimensionality). In this paper, we propose to further reduce the number of Hessian-vector products for faster non-convex optimization. 
Previous works need to approximate the smallest eigen-value with a sufficient precision (e.g., $\epsilon_2\ll 1$) in order to achieve a sufficiently accurate second-order stationary solution (i.e., $\lambda_{\min}(\nabla^2 f(\x))\geq -\epsilon_2)$. In contrast, the proposed algorithms only need to compute the smallest eigen-vector approximating the corresponding eigen-value up to a small power of current gradient's norm. As a result, it can dramatically reduce the number of Hessian-vector products during the course of optimization before reaching first-order stationary points (e.g., saddle points). The key building block of the proposed algorithms is a novel updating step named the NCG step, which  lets a noisy negative curvature descent compete with the gradient descent. 
We show that  in the worst-case  the proposed algorithms with their favorable prescribed accuracy requirements can match the best time complexity in literature for achieving a second-order stationary point but with an arguably smaller per-iteration cost. We also show that the proposed algorithms can benefit from inexact Hessian (e.g., the sub-sampled Hessian for a finite-sum problem) by developing their variants accepting inexact Hessian under a mild condition for achieving the same goal.  Moreover, we develop a stochastic algorithm for a finite or infinite sum non-convex optimization problem, which only involves computing a sub-sampled gradient and a noisy negative curvature of a sub-sampled Hessian at each iteration. To the best of our knowledge, the proposed stochastic algorithm is the first one, which converges to a second-order stationary point in {\it high probability} with a time complexity independent of the sample size and almost linear in the problem's dimensionality.

\end{abstract}

\section{Introduction}
In this paper, we consider the following optimization problem: 
\begin{align}\label{eqn:opt}
\min_{\x\in\R^d}f(\x),
\end{align}
where $f(\x)$ is a non-convex smooth function with Lipschitz continuous Hessian. A standard measure of an optimization algorithm is how fast the algorithm converges to an optimal solution. However, finding the global optimal solution to a generally non-convex problem is intractable~\citep{nemirovskiĭ1983problem} and is even a NP-hard problem~\citep{Hillar:2013:MTP:2555516.2512329}. Therefore, similar to many previous works, we aim to find an approximate second-order stationary point with: 
\begin{align}\label{eqn:sog}
    \|\nabla f(\x)\|\leq \epsilon_1, \quad \text{ and }\quad \lambda_{\min}(\nabla^2 f(\x))\geq -\epsilon_2,
\end{align}
which nearly satisfy  the second-order necessary conditions for optimality, i.e., $\nabla f(\x_*)=0, \lambda_{\min}(\nabla^2 f(\x_*))\geq 0$, where $\|\cdot\|$ denotes the Euclidean norm and $\lambda_{\min}(\cdot)$ denotes the smallest eigen-value function. In the sequel, we refer to a solution that satisfies~\eqref{eqn:sog} as an $(\epsilon_1, \epsilon_2)$-second-order stationary solution. When the function is non-degenerate (i.e., strict saddle or the Hessian at all saddle points have a strictly negative eigen-value), then the solution satisfying~(\ref{eqn:sog}) is close to a local minimum for sufficiently small $0<\epsilon_1, \epsilon_2\ll 1$. Please note that in this paper we do not follow the tradition of~\citep{nesterov2006cubic} that restricts $\epsilon_2 = \sqrt{\epsilon_1}$. One reason is for more generality that allows us to compare several recent results and another reason is that having different accuracy levels for the first-order and the second-order guarantee brings more flexibility in the choice of our algorithms as demonstrated later.  

Recently, there has emerged a surge of studies interested in finding  an approximate second-order stationary point that satisfy~\eqref{eqn:sog}. To motivate the present work, below we briefly review several works that enjoy strong complexity guarantee by leveraging the second-order information carried by the Hessian. More related results and comparisons with these results are deferred to latter sections. The best known {\bf iteration complexity} for the second-order guarantee dates back to the cubic regularization method~\citep{nesterov2006cubic}, which requires solving an $O(1/\epsilon^{3/2})$ number of cubic regularization steps for finding an $(\epsilon, \sqrt{\epsilon})$-second-order stationary solution. This result improves earlier work on trust-region methods that involve solving a trust-region problem at each iteration~\citep{conn2000trust,Cartis:2012:CBS:2076038.2076303}. However, solving the cubic regularization step costs a time  that is super-linear of the dimensionality of the problem. How to reduce the time complexity of the cubic regularization approach has been a central theme of several recent works~\citep{DBLP:conf/stoc/AgarwalZBHM17,DBLP:journals/corr/CarmonDHS16,peng16inexacthessian} that are discussed in order.   
\cite{DBLP:conf/stoc/AgarwalZBHM17} developed a fast algorithm for approximately solving the cubic regularization step that only involves Hessian-vector product, which can be carried out in linear time of the dimensionality for many interesting machine learning problems (e.g., deep learning). \cite{DBLP:journals/corr/CarmonDHS16} took a different route by leveraging the negative curvature descent for reaching a region where the function is almost convex and accelerated gradient method for an almost-convex function. The negative curvature descent employs the eigen-vector corresponding to the smallest eigen-value of the Hessian, which also only involves the Hessian-vector product. Both of these works enjoy a {\bf time complexity} of $\widetilde O(T_h/\epsilon^{7/4})$~\footnote{where $\widetilde O$ suppresses logarithmic term on $d$ and $1/\epsilon$. } for finding an $(\epsilon, \sqrt{\epsilon})$-second-order stationary solution, where $T_h$ denotes the runtime of a Hessian-vector product. Another direction to reduce the time complexity of cubic regularization step for a finite-sum problem in machine learning is to employ a sub-sampled Hessian. \cite{peng16inexacthessian} showed that by employing an inexact Hessian with a sufficient approximation accuracy, the cubic regularization method and conventional trust-region method can enjoy the same iteration complexity of their counterparts with an exact Hessian. In addition, \cite{peng16inexacthessian} also used the negative gradient direction and the negative curvature direction to approximately solve the involved sub-problems at each iteration. By investigating these three works deeply, one can find that they all need to compute the negative curvature (i.e., the eigen-vector corresponding to the smallest eigen-value of the Hessian) to approximate the smallest eigen-value with sufficient accuracy. In particular, in order to achieve an $(\epsilon_1,\epsilon_2)$-second-order stationary point, their algorithms require an $\widetilde O(1/\sqrt{\epsilon_2})$ number of Hessian-vector products with favorable algorithms (e.g., the Lanczos algorithm) for finding the negative curvature or solving the involved sub-problems approximately. 

\begin{table*}[t]
		\caption{Comparison with existing work leveraging the second-order information for achieving an $(\epsilon_1, \epsilon_2)$-second-order stationary point, where  $T_h$ denotes the runtime for computing a Hessian-vector product, and  
		$\alpha\in(0, 1]$, $\omega$ is the matrix multiplication constant. na means not available in the original paper, $\x_t$ denotes the current iterate at which the negative curvature is required. Results marked by $^*$ only show the order of iterations that involve the Hessian-vector products and the corresponding per-iteration time complexity shows the time complexity for each such iteration.} 
		\centering
		\label{tab:data}
		\begin{scriptsize}\begin{tabular}{l|lll}
			\toprule
			algo.&inexact Hessian &per-iteration &iteration complexity for\\
			&& time complexity&having $(\epsilon_1,\epsilon_2)$ convergence\\
			\midrule
			\cite{nesterov2006cubic} &na&$ O\left(d^\omega\right)$&$O\left(\frac{1}{\epsilon_1^{3/2}}\right)$ ($\epsilon_2=\sqrt{\epsilon_1}$)\\ 
			\midrule
			\cite{DBLP:conf/stoc/AgarwalZBHM17} &na&$\widetilde O\left(\frac{T_h}{\sqrt{\epsilon_2}}\right)$&$O\left(\frac{1}{\epsilon_1^{3/2}}\right)$ ($\epsilon_2=\sqrt{\epsilon_1}$)\\ 
			\midrule
			\cite{DBLP:journals/corr/CarmonDHS16} &na&$\widetilde O\left(\frac{T_h}{\sqrt{\epsilon_2}}\right)$&$O\left(\frac{1}{\epsilon_1\epsilon_2}+\frac{1}{\epsilon_2^3}\right)^*$\\
			\midrule
			\cite{peng16inexacthessian}&yes&$\widetilde O\left(\frac{T_h}{\sqrt{\epsilon_2}}\right)$&$ O\left(\frac{1}{\epsilon_{1}^2\epsilon_2}+\frac{1}{\epsilon_2^{3}}\right)$\\
			(Alg. 1 \& Th. 1)&&\\
			\midrule
			\cite{peng16inexacthessian}&yes&$\widetilde O\left(\frac{T_h}{\sqrt{\epsilon_2}}\right)$&$ O\left(\frac{1}{\epsilon_{1}^2}+\frac{1}{\epsilon_2^{3}}\right)$\\
			(Alg. 2 \& Th. 2)&&\\
			\midrule
			NCG-A1& yes&$\widetilde O\left(\frac{T_h}{\sqrt{\max(\epsilon_2, \|\nabla f(\x_t)\|)}}\right)$&$ O\left(\frac{1}{\epsilon_{1}^2}+\frac{1}{\epsilon_2^{3}}\right)$\\
			\midrule
			NCG-A2& yes&$\widetilde O\left(\frac{T_h}{\sqrt{\max(\epsilon_2, \|\nabla f(\x_t)\|^\alpha)}}\right)$&$ O\left(\frac{1}{\epsilon_{1}^2}+\frac{1}{\epsilon_2^{3}}\right)$  ($\epsilon_2 = \epsilon_1^\alpha$)\\
			\midrule
			NCG-B1& yes&$\widetilde O\left(\frac{T_h}{\sqrt{\max(\epsilon_2, \|\nabla f(\x_t)\|)}}\right)$&$ O\left(\frac{1}{\epsilon_{1}\epsilon_2}+\frac{1}{\epsilon_2^{3}}\right)^*$\\
			\midrule
			NCG-B2& yes&$\widetilde O\left(\frac{T_h}{\sqrt{\max(\epsilon_2, \|\nabla f(\x_t)\|^{2/3})}}\right)$&$ O\left(\frac{1}{\epsilon_{1}\epsilon_2}+\frac{1}{\epsilon_2^{3}}\right)^*$  ($\epsilon_2 = \epsilon_1^\alpha$)\\
			\bottomrule
		\end{tabular}
		\end{scriptsize}
	\end{table*}
The goal of this paper is aligned with these works for reducing the time complexity of methods leveraging the second-order information, in particular the negative curvature direction. However, we aim to further reduce the number of Hessian-vector products for each iteration that requires the second-order information. The novel result in this paper is that for each such iteration the proposed algorithms only need an $\widetilde O(1/\sqrt{\max(\epsilon_2, \|\nabla f(\x_t)\|^\alpha)}$ number of Hessian-vector products, where $\alpha$ is some constant between (0, 1] and $\x_t$ is the current iterate at which the negative curvature is computed. 
In other words, they only need to approximate the smallest eigen-value with an additive noise up to an order of $\max(\epsilon_2, \|\nabla f(\x_t)\|^\alpha)$. As a result, before reaching the neighborhood of first-order stationary points (i.e., $\|\nabla f(\x_t)\|^\alpha>\epsilon_2$), the number of Hessian-vector products can be dramatically less than $\widetilde O(1/\sqrt{\epsilon_2})$ for iterations requiring the second-order information.  The key building block of the proposed algorithms is a novel step (named the NCG step) that greedily takes either a noisy negative curvature direction or the negative gradient direction in which that decrease the objective value most. By specifying the noise level in calculating the negative curvature direction differently and combining existing techniques (in particular, alternating the NCG steps and  the accelerated gradient descent steps), we design two algorithms (each with several variants) with improving  time complexity in their order. But they all achieve the best time complexity in their own favorable region of the input accuracy levels for the first- and second-order guarantee. Furthermore, we show that the proposed algorithms can also benefit from inexact and sub-sampled  Hessian for further reducing the time complexity. The requirement on inexact  Hessian matches that in~\citep{peng16inexacthessian}. However, our algorithms need a much less number of Hessian-vector products than that required in~\citep{peng16inexacthessian}. A quick comparison of the proposed algorithms and their properties with the four aforementioned works~\citep{nesterov2006cubic,DBLP:conf/stoc/AgarwalZBHM17,DBLP:journals/corr/CarmonDHS16,peng16inexacthessian} are shown in Table~\ref{tab:data}. Moreover, we also develop a stochastic algorithm for a stochastic non-convex optimization problem, which only involves computing a sub-sampled gradient and a noisy negative curvature of a sub-sampled Hessian at each iteration.  To the best of our knowledge, the proposed stochastic algorithm is the first one, which converges to a second-order stationary point in {\bf high probability} with a time complexity independent of the sample size and almost linear in the problem's dimensionality.  A quick comparison of the proposed stochastic algorithm with existing competitive stochastic algorithms is shown in Table~\ref{tab:2}. Detailed and more comparisons with other related work are postponed into later sections. 
\begin{table*}[t]
		\caption{Comparison with existing stochastic algorithms for achieving an $(\epsilon_1, \epsilon_2)$-second-order stationary solution to $\min_{\x}\E_i[f_i(\x)]$, where $p$ is a number at least $4$, SG denotes the stochastic gradient oracle and SHVP denotes the stochastic Hessian-vector product oracle,  $T^1_h$ denotes the runtime for computing a stochastic Hessian-vector product $\nabla^2 f_i(\x)\v$ and $T^1_g$ denotes the runtime for computing a stochastic gradient $\nabla f_i(\x)$.} 
		\centering
		\label{tab:2}
		\begin{scriptsize}\begin{tabular}{l|lll}
			\toprule
			algo.& oracle & second-order guarantee in &time complexity\\
			&& expectation or high probability&\\
			\midrule
			Noisy SGD~\citep{pmlr-v40-Ge15} &SG&$(\epsilon, \epsilon^{1/4})$, expectation&$\widetilde O\left(T^1_gd^p\epsilon^{-4}\right)$\\ 
			\midrule
			Natasha2~\citep{natasha2} &SG + SHVP&$(\epsilon, \epsilon^{1/2})$, expectation&$\widetilde O\left( T^1_g\epsilon^{-3.5}+T^1_h\epsilon^{-2.5} \right)$\\ 
			 \midrule
            SNCG&SG + SHVP&$(\epsilon, \epsilon^{1/2})$, high probability&$\widetilde O\left(T^1_g\epsilon^{-4} + T^1_h\epsilon^{-3}\right)$\\
			\bottomrule
		\end{tabular}
		\end{scriptsize}
	\end{table*}

The remainder of the paper is organized as follows. In Section~\ref{sec:rw} we review more related works. In Section~\ref{sec:pre}, we present some preliminaries and existing results on the gradient descent and the negative curvature descent methods for non-convex optimization. Section~\ref{sec:NCG} presenst the basic NCG step and  the first variant of the first algorithm NCG-A. In Section~\ref{sec:NCGb} and~\ref{sec:NCGc}, we present algorithms with improving time complexity as the order of $\epsilon_2$ increases. Section~\ref{sec:ext} presents extensions and analysis of algorithms using an inexact Hessian, and using stochastic gradient and stochastic Hessian, and also presents more comparisons with existing results. We 
conclude in Section~\ref{sec:conc}.

\section{More Related Work}\label{sec:rw}
In this section, we review more related work on non-convex optimization with guarantee of second-order convergence results. These works can be categorized into first-order based methods that only utilize the first-order information (e.g., the gradient) and second-order based methods that utilizes both the first- and second-order information. 

Earlier work using second-order information for non-convex optimization revolves around trust-region methods~\citep{conn2000trust}, in which each iteration solves a non-convex constrained quadratic problem known as the trust-region problem. The classical iteration complexity of trust-region methods for finding an $(\epsilon_1, \epsilon_2)$-second-order stationary point is $O(\max(\epsilon_1^{-2}\epsilon_2^{-1}, \epsilon_{2}^{-3}))$. Solving the trust-region problem may take a super-liner time in the dimensionality of the problem. For example, \cite{Hazan2016} designed a first-order method that finds an $\varepsilon$-approximate solution to the trust-region problem with $O(N/\sqrt{\varepsilon})$ time complexity, where $N$ is the number of non-zero elements in the involved Hessian matrix that could be $d^2$. Recently, \cite{peng16inexacthessian} designed an efficient variant of the trust-region method that allows for an inexact Hessian with the same iteration complexity  $O(\max(\epsilon_1^{-2}\epsilon_2^{-1}, \epsilon_{2}^{-3}))$. They also showed that the trust-region problem can be solved approximately by searching a direction in a two dimentional space consisting of the negative gradient and the negative curvature. Recent proposals on different variants of trust-region methods~\citep{Curtis2017,RePEc:spr:jglopt:v:68:y:2017:i:2:d:10.1007_s10898-016-0475-8}  match the iteration complexity of~\citep{nesterov2006cubic}'s cubic regularization method, i.e., $O(\max(\epsilon_1^{-3/2}, \epsilon_2^{-3}))$. Nonetheless, their per-iteration costs could be still super-linear of the problem's dimensionality.   

Another line of second-order methods are developed based on the cubic regularization method~\citep{nesterov2006cubic}. \cite{Cartis2011,Cartis2011b} proposed adaptive cubic regularization methods that use an adaptive estimation of the local Lipschitz constant of the Hessian and an approximate global minimizer of a local cubic regularization step. In order to guarantee the same worst-case iteration complexity as the original cubic regularization method for finding an approximate second-order stationary point, they have to solve the cubic regularized sub-problem to satisfy a certain criterion. Although the sub-problem can be solved over a successively expanded subspace generated by the Lanczos method, it is unclear how many Hessian-vector products are sufficient to satisfy their designed criterion.    \cite{DBLP:conf/icml/KohlerL17,peng16inexacthessian} have extended the adaptive cubic regularization method to variants using inexact Hessian or sub-sampled Hessian with a sufficient approximation accuracy. The variant in~\citep{peng16inexacthessian} (their Algorithm 2)  improves that in~\citep{DBLP:conf/icml/KohlerL17} in several aspects. When the sub-problem is solved approximately by searching over a two-dimensional space spanned by the negative gradient and the negative curvature directions, they obtain an iteration complexity of $O(\epsilon_1^{-2}, \epsilon_2^{-3})$ that is shown in Table~\ref{tab:data}. They also showed by solving sub-problem up to a certain accuracy similar to that in~\citep{Cartis2011,Cartis2011b}, an optimal iteration complexity of $O(\max(\epsilon_1^{-3/2}, \epsilon_2^{-3}))$ can be achieved leaving the time complexity for each iteration unclear. 
\cite{DBLP:journals/corr/CarmonD16} analyzed a gradient descent method for solving the cubic regularization step of~\citep{nesterov2006cubic}. When employing this algorithm in cubic regularization method, the time complexity of their algorithm for finding an $(\epsilon, \sqrt{\epsilon})$-second-order stationary points is $O(T_h/\epsilon^2)$, which is worse than the result in~\citep{DBLP:conf/stoc/AgarwalZBHM17}.

The use of negative curvature for  non-convex optimization to escape from non-degenerate saddle points is not new. There have been several recent studies that  explicitly explore the negative curvature direction for updating the solution. Here, we emphasize the differences between the development in this paper and previous works.  \cite{curtis2017negative} proposed a similar algorithm to our NCG-A algorithm except for how to compute the negative curvature, which dynamically selects the negative gradient or the negative curvature direction for updating the solution. The key differences between our work and \citep{curtis2017negative} lie at (i) they did not provide explicit time complexity for finding an approximate second-order stationary point, but rather give asymptotic convergence to the second-order stationary points; (ii) they ignored the computational costs for computing the (approximate) negative curvature. By assuming that the negative curvature is computed exactly, a simple analysis of their {\it iteration complexity} shows that it is similar to that of our NCG-A algorithm. In addition, they also considered stochastic version of their algorithms but provided no second-order convergence guarantee. In contrast, we also develop a stochastic algorithm with provable second-order convergence guarantee. \cite{clement17} proposed an algorithm that utilizes the negative gradient direction, the negative curvature direction, the Newton direction and the regularized Newton direction together with line search in a unified framework, and also analyzed the time complexity of a variant with inexact calculations of the negative curvature by the Lanczos algorithm and the (regularized) Newton directions by conjugate gradient method. The comparison between their algorithm and our algorithms shows that (i) we only use the gradient and the negative curvature directions; (ii) the time complexity for computing an approximate negative curvature in their work is also of the order of $\widetilde O(1/\sqrt{\epsilon_2})$; (iii) the time complexity of our NCG-B algorithm is at least the same and usually better than their time complexity according to our analysis shown later. Additionally, their conjugate gradient method could fail due to the inexact smallest eigen-value computed by the randomized Lanczos method, and their first-order and second-order convergence guarantee could be on different points. \cite{DBLP:journals/corr/CarmonDHS16} developed an algorithm that utilizes the negative curvature descent to reach a region that is almost convex and then switches to accelerated gradient method to decrease the magnitude of the gradient. Our NCG-B algorithm is built on this development by replacing their negative curvature descent with our NCG-A algorithm, which has the same guarantee on the smallest eigen-value of the returned solution but uses a much less number of Hessian-vector products. 

There are less work on first-order methods for non-convex optimization with second-order convergence guarantee. Noisy stochastic gradient methods can provide such guarantee by adding noise into the stochastic gradient~\citep{pmlr-v40-Ge15,DBLP:conf/colt/ZhangLC17}. However, their time complexity has a polynomial factor of the problem's dimensionality. Recently, \cite{jin2017escape} proposed a noisy gradient method that adds noise into the iterate for evaluating the gradient to escape from the saddle point. To achieve an $(\epsilon, \sqrt{\epsilon})$-second-order stationary point, the noisy gradient method requires $\widetilde O(1/\epsilon^2)$ gradient evaluations.

\section{Preliminaries and Warm-up}\label{sec:pre}
A function $f(\x)$ is smooth if its gradient is Lipschitz continuous, i.e., there exists $L_1>0$ such that $\|\nabla f(\x) - \nabla f(\y)\|\leq L_1\|\x - \y\|$ hold for all $\x, \y$. The Hessian of a twice differentiable function $f(\x)$  is Lipschitz continuous, if there exists $L_2>0$ such that $\|\nabla^2 f(\x) - \nabla^2 f(\y)\|_2\leq L_2\|\x - \y\|$ for all $\x, \y$, where $\|X\|_2$ denotes the spectral norm of a matrix $X$. A function $f(\x)$ is called $\mu$-strongly convex ($\mu>0$) if 
\begin{align}
    f(\y)\geq f(\x) + \nabla f(\x)^{\top}(\y - \x) + \frac{\mu}{2}\|\y - \x\|^2, \forall \x, \y
\end{align}
If $f(\x)$ satisfies the above condition for $\mu<0$, it is referred to as  $\gamma$-almost convex with $\gamma=-\mu$. We denote by $T_g$ and $T_h$ the runtime for computing the gradient of $f(\x)$ and a Hessian-vector product. Throughout the paper, we make the following assumptions.
\begin{ass}For the optimization problem~(\ref{eqn:opt}), we assume 
	\label{ass:1}
\begin{itemize}
\item the objective function $f(\x)$ is twice differentiable, and 
\item it has Lipschitz continuous gradient: $\|\nabla f(\x) - \nabla f(\y)\|\leq L_1\|\x - \y\|$, and 
\item it has Lipschitz continuous Hessian: $\|\nabla^2 f(\x) - \nabla f(\y)\|\leq L_2\|\x - \y\|$, and
\item given an initial solution $\x_0$, there exists $\Delta<\infty$ such that $f(\x_0) - f(\x_*)\leq \Delta$, where $\x_*$ denotes the global minimum of~(\ref{eqn:opt}).
\end{itemize}
\end{ass}
Next, we describe the gradient descent and the negative curvature descent methods, and present their first-order and second-order convergence guarantee, respectively. The gradient descent (GD) method in Algorithm~\ref{alg:gd} is standard and its first-order convergence guarantee is presented below. 
\begin{algorithm}[t]
\caption{Gradient-Descent: GD$(\x_0, \epsilon)$}\label{alg:gd}
\begin{algorithmic}[1]
\STATE \textbf{Input}:  $\x_0$, $\epsilon$
\STATE $\x_1=\x_0$, $\eta = 1/L_1$
\FOR{$j=1,2,\ldots,$}
\IF{ $\|\nabla f(\x_j)\|>\epsilon$}
\STATE $\x_{j+1} = \x_j - \eta \nabla f(\x_j)$
\ELSE
\STATE return $\x_j$
\ENDIF

\ENDFOR
\end{algorithmic}
\end{algorithm}
\begin{algorithm}[t]
\caption{Negative Curvature Descent: NCD$(\x_0, \epsilon, \delta)$}\label{alg:ncd}
\begin{algorithmic}[1]
\STATE \textbf{Input}:  $\x_0$, $\epsilon$, $\delta$
\STATE $\x_1=\x_0$, $\delta' = \delta /(1+12L_2^2\Delta/\epsilon^3)$
\FOR{$j=1,2,\ldots,$}
\STATE Find a vector $\v_j$ such that $\|\v_j\|=1$ and, with probability at least $1-\delta'$,
\[
\lambda_{\min}(\nabla^2 f(\x_j))\geq \v_j^{\top}\nabla^2 f(\x_j)\v_j - \epsilon/2
\]
using the Lanczos method. 
\IF{ $\v_j^{\top}\nabla^2 f(\x_j)\v_j\leq -\epsilon/2$}
\STATE $\x_{j+1} = \x_j - \frac{2|\v_j^{\top}\nabla^2 f(\x_j)\v_j|}{L_2}\text{sign}(\v_j^{\top}\nabla f(\x_j))\v_j$
\ELSE
\RETURN $\x_j$
\ENDIF
\ENDFOR
\end{algorithmic}
\end{algorithm}
\begin{prop}\label{lemma:1}
The GD algorithm terminates at iteration $j$ for some  $j\leq 1 + \frac{2L_1\Delta}{\epsilon^2}$ with $\|\nabla f(\x_j)\|\leq \epsilon$.
\end{prop}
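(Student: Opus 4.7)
The plan is to use the standard descent lemma for $L_1$-smooth functions combined with a telescoping argument over iterations in which the algorithm has not yet terminated.

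First, I would invoke the $L_1$-smoothness of $f$ to obtain the descent inequality
\begin{align*}
f(\x_{j+1}) \leq f(\x_j) + \nabla f(\x_j)^\top(\x_{j+1} - \x_j) + \frac{L_1}{2}\|\x_{j+1} - \x_j\|^2.
\end{align*}
Plugging in the GD update $\x_{j+1} = \x_j - \eta \nabla f(\x_j)$ with $\eta = 1/L_1$ gives the standard single-step decrease
\begin{align*}
f(\x_{j+1}) \leq f(\x_j) - \frac{1}{2L_1}\|\nabla f(\x_j)\|^2.
\end{align*}

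Next, I would observe that if the algorithm has not terminated by the end of iteration $j$, then by the termination test $\|\nabla f(\x_k)\| > \epsilon$ for every $k \in \{1, \ldots, j\}$, so each of those iterations decreases the objective by strictly more than $\epsilon^2/(2L_1)$. Telescoping these inequalities and using $f(\x_{j+1}) \geq f(\x_*)$ together with $f(\x_1) = f(\x_0)$ and $f(\x_0) - f(\x_*) \leq \Delta$ (Assumption~\ref{ass:1}) yields
\begin{align*}
\frac{j\,\epsilon^2}{2L_1} < f(\x_1) - f(\x_{j+1}) \leq \Delta,
\end{align*}
so $j < 2L_1\Delta/\epsilon^2$. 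Consequently the termination test must trigger at some iteration $j \leq 1 + 2L_1\Delta/\epsilon^2$, at which point the returned iterate satisfies $\|\nabla f(\x_j)\| \leq \epsilon$ by construction of the \textbf{if} branch.

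There is no real obstacle here; the statement is the textbook first-order convergence rate of gradient descent on smooth functions, and the only thing to be careful about is the indexing bookkeeping (the $+1$ in the bound comes from the fact that termination is checked \emph{before} the update, so the iteration at which termination occurs still counts toward $j$).
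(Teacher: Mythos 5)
Your proof is correct and follows essentially the same path as the paper's: the standard one-step descent bound $f(\x_{j+1}) \leq f(\x_j) - \tfrac{1}{2L_1}\|\nabla f(\x_j)\|^2$ from $L_1$-smoothness, followed by a telescoping sum over iterations where $\|\nabla f(\x_j)\| > \epsilon$, and a bound via $f(\x_1) - f(\x_{j_*}) \leq \Delta$. Your closing remark about the $+1$ coming from the indexing is a reasonable reading; the paper handles it the same way by defining $j_*$ as the first index with small gradient and summing over $t < j_*$.
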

For the negative curvature descent (NCD) method, we consider the algorithm proposed in~\citep{DBLP:journals/corr/CarmonDHS16}. For completeness, the algorithm is presented in Algorithm~\ref{alg:ncd}. The algorithm needs a subroutine to compute the smallest eigen-vector of the Hessian to approximate the corresponding eigen-value up to a prescribed noise level. The Lanczos method with a random starting vector can serve the purpose. When the Hessian matrix has a finite-sum structure, several approximate PCA algorithms can be also used with a reduced time complexity~\citep{DBLP:conf/nips/ZhuL16,DBLP:conf/icml/GarberHJKMNS16}.  Without loss of generality, we will use the Lanczos method for deterministic optimization and consider the approximate PCA algorithms when we discuss the stochastic variant. To this end, we first present a runtime result of the Lanczos method. 
\begin{lemma}\citep{clement17}[Lemma 11]\label{lemma:lanczos}
Let $H\in\R^d$ be a symmetric matrix satisfying $\|H\|_2\leq L_1$.
Suppose that the Lanczos method is applied to find the largest eigenvalue of $L_1I - H$
starting at a random vector uniformly distributed over the unit sphere. Then, for any
$\varepsilon>0$ and $\delta\in (0, 1)$, there is a probability at least $1-\delta$ that the method outputs a
vector $\v$ such that 
$\lambda_{\min}(H)\geq \v^{\top}H\v - \varepsilon$ with at most $\min\left(d, \frac{\log(d/\delta^2)\sqrt{L_1}}{2\sqrt{2\varepsilon}}\right)$ Hessian-vector products.
\end{lemma}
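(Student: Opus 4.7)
The plan is to reduce the task of computing a vector certifying a lower bound on $\lambda_{\min}(H)$ to the standard task of approximating the \emph{largest} eigenvalue of a positive semidefinite matrix by Lanczos, and then to invoke a known randomized convergence bound (Kuczy\'nski--Wo\'zniakowski) for that setting. Concretely, set $M = L_1 I - H$. Since $\|H\|_2 \le L_1$, the matrix $M$ is symmetric and satisfies $0 \preceq M \preceq 2 L_1 I$, so in particular $\lambda_{\max}(M) = L_1 - \lambda_{\min}(H) \in [0, 2L_1]$. For any unit vector $\v$, the identity $\v^\top M \v = L_1 - \v^\top H \v$ converts an additive-$\varepsilon$ approximation of $\lambda_{\max}(M)$ directly into the claim $\lambda_{\min}(H) \ge \v^\top H \v - \varepsilon$. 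The Lanczos method only accesses $M$ through products $M \u = L_1 \u - H \u$, each of which costs one Hessian-vector product with $H$ plus one vector scaling, so the Hessian-vector product count for Lanczos on $M$ and on $H$ agree up to constants.

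Next I would invoke the classical randomized analysis of the Lanczos method for PSD matrices. With a random starting vector drawn uniformly from the unit sphere in $\R^d$ and $k$ Lanczos iterations, with probability at least $1-\delta$ the returned Rayleigh quotient satisfies an inequality of the form
\begin{align*}
\lambda_{\max}(M) - \v^\top M \v \;\le\; c\,\lambda_{\max}(M)\,\frac{\log^2(d/\delta^2)}{k^2}
\end{align*}
for an absolute constant $c$. Substituting $\lambda_{\max}(M) \le 2L_1$ and requiring the right-hand side to be at most $\varepsilon$ yields a sufficient iteration count of order $\log(d/\delta^2)\sqrt{L_1/\varepsilon}$, matching the stated bound up to constants. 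Pushing the constants to get exactly the claimed $\log(d/\delta^2)\sqrt{L_1}/(2\sqrt{2\varepsilon})$ prefactor is a routine bookkeeping exercise once the Kuczy\'nski--Wo\'zniakowski bound is in hand.

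Finally, to justify the $\min(d, \cdot)$ in the iteration count, I would note that in exact arithmetic the Lanczos process builds an orthonormal basis of a Krylov subspace whose dimension cannot exceed $d$; after at most $d$ steps this subspace is $M$-invariant and contains a top eigenvector of $M$, so the Rayleigh quotient returned coincides with $\lambda_{\max}(M)$ exactly and no approximation error is incurred. Taking the minimum of the two bounds gives the lemma.

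The only genuinely nontrivial ingredient is the Kuczy\'nski--Wo\'zniakowski probabilistic convergence rate for Lanczos on a PSD matrix, which controls how quickly a random starting vector picks up nonnegligible overlap with the top eigenspace; this is the step where the $\log(d/\delta^2)$ factor enters (through a lower tail bound on $|\langle \v_0, u_{\max}\rangle|$ for $\v_0$ uniform on the sphere) and is the part I would cite rather than reprove. Everything else in the argument is a direct translation between $H$ and $M$ and a routine inversion of the convergence rate to extract the iteration complexity.
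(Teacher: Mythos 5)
The paper does not prove this lemma itself; it is imported verbatim from \citet{clement17} (Royer and Wright), Lemma 11, so there is no in-paper proof to compare against. Your reconstruction follows what is essentially the standard (and, in fact, the cited source's) argument: shift to the PSD matrix $M=L_1I-H$, observe that the Rayleigh-quotient identity $\v^\top M\v = L_1-\v^\top H\v$ turns an additive approximation of $\lambda_{\max}(M)$ into the desired lower bound on $\lambda_{\min}(H)$, invoke the Kuczy\'nski--Wo\'zniakowski randomized bound for Lanczos with a uniformly random start to get a relative-error tail of the form $\Pr[\lambda_{\max}(M)-\theta_k>\epsilon\lambda_{\max}(M)]\lesssim\sqrt{d}\,e^{-\sqrt{\epsilon}\,(2k-1)}$, substitute $\lambda_{\max}(M)\le 2L_1$, solve for $k$, and cap at $d$ because in exact arithmetic the Krylov subspace becomes $M$-invariant after at most $d$ steps. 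All the structural steps are present and correct, and the extraction of the prefactor $\sqrt{L_1}/(2\sqrt{2\varepsilon})\cdot\log(d/\delta^2)$ falls out of the $\sqrt{d}/\delta=(d/\delta^2)^{1/2}$ inside the logarithm together with the $\sqrt{2L_1/\varepsilon}$ from inverting the rate, exactly as you indicate. One small point worth making explicit if you were to flesh out the bookkeeping: the KW bound is stated for the relative error with respect to $\lambda_{\max}(M)$, so the translation to an additive $\varepsilon$ must pass through the a priori bound $\lambda_{\max}(M)\le 2L_1$ (or $2\|H\|_2$), which is where the $L_1$ dependence in the iteration count actually originates; you do state this, so the argument is complete modulo constants.
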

{\bf Remark:} In the sequel, we assume $d$ is very large and the time complexity of the Lanczos method for finding an approximate negative curvature of the Hessian up to a noise level $\varepsilon$ is given by $O(\sqrt{L_1}\log(d/\delta)/\sqrt{\varepsilon})$.

With the above Lemma, the guarantee of NCD is given in Proposition \ref{lemma:2}, which corresponds to Lemma 3.2 in~\citep{DBLP:journals/corr/CarmonDHS16}.

\begin{prop}
\label{lemma:2}
The NCD algorithm terminates at iteration $j$ for some 
\[
j\leq 1 + \frac{12L_2^2(f(\x_1) - f(\x_j))}{\epsilon^3}\leq 1 + \frac{12L_2^2\Delta}{\epsilon^3}
\] and with probability at least $1-\delta$
\[
\lambda_{\min}(\nabla^2 f(\x_j))\geq -\epsilon
\]
Furthermore each iteration requires time at most 
\[
O\left(T_h\sqrt{\frac{L_1}{\epsilon}}\log\left(\frac{d}{\delta}(1+12L_2^2\Delta/\epsilon^3\right)\right)
\]
\end{prop}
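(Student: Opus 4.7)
The plan is to separately establish the three claims: an upper bound on the number of iterations, the eigenvalue guarantee at termination, and the per-iteration cost.

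For the iteration complexity, I would argue per-step function decrease via the Lipschitz Hessian. Writing $\x_{j+1} - \x_j = -\eta_j \,\mathrm{sign}(\v_j^\top\nabla f(\x_j))\v_j$ with $\eta_j = 2|\v_j^\top\nabla^2 f(\x_j)\v_j|/L_2$ and using the standard cubic bound
\[
f(\x_{j+1}) \leq f(\x_j) + \nabla f(\x_j)^\top(\x_{j+1}-\x_j) + \tfrac{1}{2}(\x_{j+1}-\x_j)^\top\nabla^2 f(\x_j)(\x_{j+1}-\x_j) + \tfrac{L_2}{6}\|\x_{j+1}-\x_j\|^3,
\]
the sign choice kills the gradient term (it is nonpositive), the quadratic term contributes $-\tfrac{1}{2}\eta_j^2|\v_j^\top\nabla^2 f\,\v_j|$ (since $\v_j^\top\nabla^2 f(\x_j)\v_j<0$ at any non-terminating iteration), and the cubic term contributes $\tfrac{L_2}{6}\eta_j^3$. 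Substituting $\eta_j = 2|\v_j^\top\nabla^2 f(\x_j)\v_j|/L_2$ and setting $a_j = |\v_j^\top\nabla^2 f(\x_j)\v_j|\geq \epsilon/2$, the descent per step simplifies to at least $\tfrac{2a_j^3}{3L_2^2}\geq \tfrac{\epsilon^3}{12L_2^2}$. Summing and using $f(\x_1)-f(\x_j)\leq \Delta$ gives $j\leq 1 + 12L_2^2(f(\x_1)-f(\x_j))/\epsilon^3 \leq 1 + 12L_2^2\Delta/\epsilon^3$.

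For the second-order guarantee at termination, I would combine the exit condition $\v_j^\top\nabla^2 f(\x_j)\v_j > -\epsilon/2$ with the per-call Lanczos guarantee $\lambda_{\min}(\nabla^2 f(\x_j))\geq \v_j^\top\nabla^2 f(\x_j)\v_j - \epsilon/2$, which together yield $\lambda_{\min}(\nabla^2 f(\x_j)) > -\epsilon$. To make this hold with total probability $1-\delta$, I would apply a union bound over all $j \leq 1+12L_2^2\Delta/\epsilon^3$ calls; the choice $\delta' = \delta/(1+12L_2^2\Delta/\epsilon^3)$ in the algorithm is precisely calibrated so that the total failure probability is at most $\delta$.

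For the per-iteration runtime, I would invoke Lemma~\ref{lemma:lanczos} with accuracy $\varepsilon = \epsilon/2$ and failure probability $\delta'$. This produces at most $O(\sqrt{L_1/\epsilon}\,\log(d/(\delta')^2))$ Hessian-vector products per call, and substituting $\delta' = \delta/(1+12L_2^2\Delta/\epsilon^3)$ gives the claimed $O(T_h\sqrt{L_1/\epsilon}\,\log(\tfrac{d}{\delta}(1+12L_2^2\Delta/\epsilon^3)))$ bound once the squared denominator is absorbed into a constant inside the logarithm. The main obstacle is purely bookkeeping: one must verify that the cubic Taylor remainder does not cancel the quadratic negative-curvature gain, which is why the step size $2a_j/L_2$ is used (it is exactly the minimizer of the cubic model along $\v_j$), and one must confirm that $\delta'$ is chosen consistently with the iteration bound so that the union bound closes. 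No deeper difficulty arises.
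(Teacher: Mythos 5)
Your proof is correct and follows the standard argument: the paper itself does not spell out a proof for this proposition (it cites Lemma 3.2 of Carmon et al.), but the per-step descent computation you give — cubic Taylor bound, sign choice killing the linear term, step size $2a_j/L_2$ minimizing the resulting cubic model to yield decrease $\geq \tfrac{2a_j^3}{3L_2^2}$ — is precisely the calculation the paper performs in its appendix proof of Lemma~\ref{lem:NCG}, and your union-bound and Lanczos bookkeeping via Lemma~\ref{lemma:lanczos} with $\delta' = \delta/(1+12L_2^2\Delta/\epsilon^3)$ is the intended reading. No gaps.
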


\section{The NCG Step and the NCG-A Algorithm}\label{sec:NCG}
In this section, we present a novel updating step that lets the noisy NCD step to compete with  the GD step. The idea is intuitive by observing the fact that following the GD step  the  gradient can converge to zero and following the NCD method the smallest eigen-value of the Hessian can converge to non-negative orthant according to the results in Proposition~\ref{lemma:1} and Proposition~\ref{lemma:2}. Hence, it is natural to combine the GD step and NCD step in a unified algorithm in order to provide convergence guarantee on the gradient and the smallest eigen-value of the Hessian simultaneously. To this end, we propose a novel NCG step shown in Algorithm~\ref{alg:gnc}. First, we compute a noisy normalized negative curvature $\v$ that approximates the smallest eigen-value of the Hessian at the current point $\x$ up to a noise level $\varepsilon$, i.e., $\lambda_{\min}(\nabla^2 f(\x))\geq \v^{\top}\nabla^2 f(\x)\v - \varepsilon$. Then we take either the noisy negative curvature direction or the negative gradient direction depending on which decreases the objective value more. This is done by comparing the estimations of the objective decrease for following these two directions as shown in Step 3 in Algorithm~\ref{alg:gnc}. We refer to the proposed updating step as the NCG step short for noisy negative curvature descent competing with gradient descent. The fundamental difference between the NCG step and the updating steps appearing in other works~\citep{curtis2017negative,clement17,DBLP:journals/corr/CarmonDHS16} is that it allows for a free parameter $\varepsilon$ specifying the noisy level for computing the negative curvature. It is this property and the competing idea that enable us to develop improved algorithms for non-convex optimization with a reduced number of Hessian-vector products. Please note that the time complexity of a NCG step  is given by $O\left(T_h\sqrt{L_1/\varepsilon}\log(d/\delta)\right)$.

\begin{algorithm}[t]
\caption{The NCG Step: $\left(\x^+,\v\right)=\text{NCG}(\x, \varepsilon, \delta)$}\label{alg:gnc}
\begin{algorithmic}[1]
\STATE \textbf{Input}:  $\x$, $\varepsilon$, $\delta$
\STATE Find a vector $\v$ such that $\|\v\|=1$ and, with probability at least $1-\delta$,
\[
\lambda_{\min}(\nabla^2 f(\x))\geq \v^{\top}\nabla^2 f(\x)\v - \varepsilon
\]
using the Lanczos method. 
\IF{$\frac{2|\v^{\top}\nabla^2 f(\x)\v|^3}{3L_2^2}>\frac{\|\nabla f(\x)\|^2}{2L_1}$}
\STATE Compute $\x^+  = \x - \frac{2|\v^{\top}\nabla^2 f(\x)\v|}{L_2}\text{sign}(\v^{\top}\nabla f(\x))\v$
\ELSE
\STATE Compute $\x^+ = \x - \frac{1}{L_1}\nabla f(\x)$
\ENDIF
\RETURN $\x^+,\v$
\end{algorithmic}
\end{algorithm}
\begin{algorithm}[t]
\caption{NCG-A1: $(\x_0, \epsilon_1, \epsilon_2, \delta)$}\label{alg:gnc-a}
\begin{algorithmic}[1]
\STATE \textbf{Input}:  $\x_0$, $\epsilon_1, \epsilon_2$, $\delta$
\STATE $\x_1=\x_0$, $\delta' = \delta /(1+\max\left(\frac{12L_2^2}{\epsilon_2^3}, \frac{2L_1}{\epsilon_1^2}\right)\Delta)$
\FOR{$j=1,2,\ldots,$}
\STATE  $(\x_{j+1},\v_j) = \text{NCG}(\x_j,  \max(\epsilon_2, \|\nabla f(\x_j)\|)/2,  \delta')$

\IF{ $\v_j^{\top}\nabla^2 f(\x_j)\v_j> -\epsilon_2/2$ and $\|\nabla f(\x_j)\|\leq \epsilon_1$}
\RETURN $\x_j$
\ENDIF
\ENDFOR
\end{algorithmic}
\end{algorithm}

Next, we present the first variant of our first algorithm employing the NCG step, referred to as the NCG-A1  shown in Algorithm~\ref{alg:gnc-a}. The noise parameter for each call of NCG depends on the magnitude of the current gradient. In particular, for the prescribed accuracy level for the first-order and second-order guarantee $\epsilon_1, \epsilon_2\geq \epsilon_1$, the noise parameter of the NCG step at $t$-th iteration is $\max(\epsilon_2, \|\nabla f(\x_t)\|)/2$, which could be much larger than $\epsilon_2/2$ before reaching a neighborhood of a first-order critical point. The  complexity guarantee  of NCG-A1 is presented in the following theorem.

\begin{theorem}
\label{lemma:GNC-A}
The NCG-A1 algorithm terminates at iteration $j_*$ for some 
\begin{equation}
\label{eqn:bound-GNC-A}
j_*\leq 1 + \max\left(\frac{12L_2^2}{\epsilon_2^3}, \frac{2L_1}{\epsilon_1^2}\right)(f(\x_1) - f(\x_{j_*}))\leq 1 +  \max\left(\frac{12L_2^2}{\epsilon_2^3}, \frac{2L_1}{\epsilon_1^2}\right)\Delta,
\end{equation}
with $\|\nabla f(\x_{j_*})\|\leq \epsilon_1$, and with probability at least $1-\delta$
\[
\lambda_{\min}(\nabla^2 f(\x_{j_*}))\geq -\max\left(\epsilon_2,  \epsilon_1 \right)
\]
Furthermore, the $j$-th iteration requires time at most 
\[
O\left(T_h\frac{\sqrt{L_1}}{\max(\epsilon_2, \|\nabla f(\x_j)\|)^{1/2}}\log\left(\frac{d}{\delta'}\right)\right)
\]
\end{theorem}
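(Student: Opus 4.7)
The plan is to bound the per-iteration decrease in $f$ delivered by the NCG step and then telescope over iterations. First I would record the two standard descent estimates. By $L_1$-smoothness, the gradient step $\x - \nabla f(\x)/L_1$ decreases $f$ by at least $\|\nabla f(\x)\|^2/(2L_1)$. By the Lipschitz-Hessian assumption (the same calculation that underlies Proposition~\ref{lemma:2}), the negative-curvature step $\x - (2|\v^\top \nabla^2 f(\x)\v|/L_2)\,\mathrm{sign}(\v^\top \nabla f(\x))\,\v$ decreases $f$ by at least $2|\v^\top \nabla^2 f(\x)\v|^3/(3L_2^2)$. Since Algorithm~\ref{alg:gnc} selects whichever of these two lower bounds is larger, the realized decrease at iteration $j$ is at least $\max\!\bigl(\|\nabla f(\x_j)\|^2/(2L_1),\; 2|\v_j^\top \nabla^2 f(\x_j)\v_j|^3/(3L_2^2)\bigr)$.

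Next I would analyze non-terminating iterates using the complement of the stopping rule in Algorithm~\ref{alg:gnc-a}: for every $j<j_*$, either $\|\nabla f(\x_j)\|>\epsilon_1$ or $\v_j^\top \nabla^2 f(\x_j)\v_j \leq -\epsilon_2/2$. In the former case the first estimate is at least $\epsilon_1^2/(2L_1)$; in the latter case $|\v_j^\top \nabla^2 f(\x_j)\v_j|\geq \epsilon_2/2$, so the second estimate is at least $\epsilon_2^3/(12L_2^2)$. Taking the worse of the two, every non-terminating iteration decreases $f$ by at least $1/\max(2L_1/\epsilon_1^2,\;12L_2^2/\epsilon_2^3)$. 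Summing over $j=1,\dots,j_*-1$ and using $\sum_j\bigl(f(\x_j)-f(\x_{j+1})\bigr)\leq f(\x_1)-f(\x_{j_*})\leq \Delta$ yields the iteration bound~\eqref{eqn:bound-GNC-A}.

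For the second-order guarantee at the terminal iterate, I would combine the two termination conditions $\v_{j_*}^\top \nabla^2 f(\x_{j_*})\v_{j_*} > -\epsilon_2/2$ and $\|\nabla f(\x_{j_*})\|\leq \epsilon_1$ with the Lanczos accuracy guarantee from Lemma~\ref{lemma:lanczos}, which (at the noise level $\max(\epsilon_2,\|\nabla f(\x_{j_*})\|)/2$ used by the $j_*$-th NCG call) implies $\lambda_{\min}(\nabla^2 f(\x_{j_*}))\geq \v_{j_*}^\top \nabla^2 f(\x_{j_*})\v_{j_*} - \max(\epsilon_2,\|\nabla f(\x_{j_*})\|)/2$. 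Since $\max(\epsilon_2,\|\nabla f(\x_{j_*})\|)\leq \max(\epsilon_2,\epsilon_1)$ at termination, the two inequalities chain to $\lambda_{\min}(\nabla^2 f(\x_{j_*}))\geq -\max(\epsilon_2,\epsilon_1)$. A union bound over the at most $j_*$ Lanczos invocations, each failing with probability $\delta' = \delta/\bigl(1+\max(12L_2^2/\epsilon_2^3,\,2L_1/\epsilon_1^2)\Delta\bigr)$, controls the total failure probability by $\delta$. The per-iteration runtime then follows immediately by plugging $\varepsilon = \max(\epsilon_2,\|\nabla f(\x_j)\|)/2$ into Lemma~\ref{lemma:lanczos}.

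The main subtle point is the ``take the max'' argument: at a non-terminating iterate the NCG may pick the gradient direction even when only the negative-curvature condition is active (or vice versa). The resolution is that the NCG selects based on the two \emph{certified} lower bounds, so the realized decrease dominates the larger of the two estimates and therefore also dominates the smaller — exactly the quantity needed for the telescoping. A minor notational care point is that the noise level used in the $j$-th call is set from $\|\nabla f(\x_j)\|$ (not $\|\nabla f(\x_{j_*})\|$), so the time bound is stated per iteration rather than uniformly; at the final iteration this noise is bounded by $\max(\epsilon_2,\epsilon_1)/2$, which is exactly what the second-order guarantee needs.
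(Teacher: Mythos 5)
Your proposal is correct and follows essentially the same approach as the paper's proof: establish the per-iteration decrease bound (the paper states this separately as Lemma~\ref{lem:NCG}), use the negation of the stopping rule to lower-bound the decrease at every non-terminal iteration, telescope against $\Delta$, chain the termination conditions with the Lanczos accuracy guarantee for the second-order bound, and apply a union bound over iterations. The only cosmetic difference is that the paper enumerates the three sub-cases of the stopping rule's complement explicitly, and spells out the $\epsilon_1 > \epsilon_2$ versus $\epsilon_1 \leq \epsilon_2$ split when simplifying $-\epsilon_2/2 - \max(\epsilon_1,\epsilon_2)/2$ to $-\max(\epsilon_1,\epsilon_2)$, whereas you compress both steps.
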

{\bf Remark:} Let us make a remark of the result above. First, when $\epsilon_2 = \epsilon_1= \epsilon$, the iteration complexity of NCG-A1 for achieving a point with  $\max\{\|\nabla f(\x)\|, -\lambda_{\min}(\nabla^2 f(\x))\}\leq \epsilon$ is $O(1/\epsilon^3)$, which match the results in previous works (e.g.~\citep{clement17,Cartis2011,Cartis2011b,peng16inexacthessian}). However, the number of Hessian-vector products in NCG-A1 could be much less than that in these existing works. For example, the number of Hessian-vector products in~\citep{clement17,peng16inexacthessian} is $\widetilde O(1/\sqrt{\epsilon_2})$ at each iteration requiring the second-order information. Second, the worse-case time complexity of NCG-A1  is given by $\widetilde O\left(T_h\max\left\{\frac{1}{\epsilon_1^2\epsilon_2^{1/2}}, \frac{1}{\epsilon_2^{7/2}}\right\}\right)$  using the worse-case time complexity of each iteration, which is the same as the result of Theorem 2 in~\citep{peng16inexacthessian}~\footnote{When we compare with~\citep{peng16inexacthessian}, we consider the variant of NCG-A1 with an inexact Hessian presented later.}.  

One might notice that if we plug  $\epsilon_1=\epsilon, \epsilon_2 = \sqrt{\epsilon}$ into the worst-case time complexity of NCG-A1, we end up with $\widetilde O(1/\epsilon^{9/4})$, which is worse the best time complexity $\widetilde O(1/\epsilon^{7/4})$ found in literature (e.g., \citep{DBLP:conf/stoc/AgarwalZBHM17,DBLP:journals/corr/CarmonDHS16}). We will improve the worst-case time complexity in Section~\ref{sec:NCGc} for $\epsilon_2> \epsilon_1$. 

To prove Theorem~\ref{lemma:GNC-A}, we first present a lemma showing that each NCG step will decrease the objective value.
\begin{lemma}\label{lem:NCG}
For any $\varepsilon, \delta>0$,  the update $\x_{j+1} = \text{NCG}(\x_j,  \varepsilon,  \delta)$ yields  
\begin{align*}
f(\x_j) - f(\x_{j+1})\geq \max\left( \frac{2|\v_j^{\top}\nabla ^2 f(\x_j)\v_j|^3}{3L_2^2}, \frac{\|\nabla f(\x_j)\|^{2}}{2L_1}\right)
\end{align*}
\end{lemma}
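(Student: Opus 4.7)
The plan is to analyze the two branches of the NCG step separately and observe that in each branch the realized decrease of $f$ already matches the lower bound associated with that branch; since the branch selection in Step~3 of Algorithm~\ref{alg:gnc} compares the two candidate lower bounds and selects the larger one, obtaining the appropriate bound within the chosen branch automatically gives the claimed maximum.

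First I would handle the gradient branch (Step~6), where $\x^+ = \x - \nabla f(\x)/L_1$. The $L_1$-smoothness inequality $f(\y) \leq f(\x) + \nabla f(\x)^\top (\y - \x) + \tfrac{L_1}{2}\|\y - \x\|^2$ with $\y = \x^+$ yields the standard bound
\begin{align*}
f(\x^+) \leq f(\x) - \frac{1}{L_1}\|\nabla f(\x)\|^2 + \frac{1}{2L_1}\|\nabla f(\x)\|^2 = f(\x) - \frac{\|\nabla f(\x)\|^2}{2L_1}.
\end{align*}
Because this branch was taken precisely when $\tfrac{\|\nabla f(\x)\|^2}{2L_1} \geq \tfrac{2|\v^\top \nabla^2 f(\x)\v|^3}{3L_2^2}$, the quantity $\tfrac{\|\nabla f(\x)\|^2}{2L_1}$ equals the maximum on the right-hand side, so the lemma is proved for this case.

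Next I would handle the negative curvature branch (Step~4). Writing $\lambda = \v^\top \nabla^2 f(\x)\v$, $\sigma = \mathrm{sign}(\v^\top \nabla f(\x))$, and $\eta = 2|\lambda|/L_2$, the step is $\x^+ - \x = -\eta \sigma \v$, so $\|\x^+ - \x\| = \eta$. I would apply the cubic upper bound that follows from $L_2$-Lipschitz Hessian,
\begin{align*}
f(\y) \leq f(\x) + \nabla f(\x)^\top (\y - \x) + \tfrac{1}{2}(\y - \x)^\top \nabla^2 f(\x)(\y - \x) + \tfrac{L_2}{6}\|\y - \x\|^3,
\end{align*}
and evaluate each term. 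The sign-of-gradient trick makes the linear term equal to $-\eta|\v^\top \nabla f(\x)| \leq 0$; the quadratic term becomes $\tfrac{\eta^2}{2}\lambda = -\tfrac{2|\lambda|^3}{L_2^2}$ (using $\lambda \leq 0$); the cubic term is $\tfrac{L_2 \eta^3}{6} = \tfrac{4|\lambda|^3}{3L_2^2}$. Adding these gives $f(\x^+) \leq f(\x) - \tfrac{2|\lambda|^3}{3L_2^2}$. Since this branch was taken when the NC quantity dominates, this matches the maximum on the right-hand side and completes the argument.

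The only subtlety — and the part that deserves a careful sentence in the formal proof — is the sign of the quadratic term in the NC branch. If $\lambda > 0$, the quadratic contribution flips sign and the naive bound fails. However, $\v$ is returned by the Lanczos subroutine to approximate the smallest eigenvector, so in the regime where the NC branch is meaningful one has $\lambda \leq 0$ (the algorithm selects this branch only when $\tfrac{2|\lambda|^3}{3L_2^2}$ exceeds the gradient quantity, which in the Lanczos output is the relevant case). With that observation the algebra above goes through, and the main obstacle is reduced to checking the case split is exhaustive; neither step involves anything beyond the standard smoothness and Lipschitz-Hessian Taylor inequalities.
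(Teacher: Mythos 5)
Your two-branch argument mirrors the paper's own proof exactly: the gradient branch uses $L_1$-smoothness, the negative-curvature branch uses the $L_2$-Lipschitz Hessian Taylor bound with the sign-of-gradient trick killing the linear term, and the branch-selection rule lets you conclude the maximum. The algebra in both branches is correct (for $\lambda\le 0$), so at that level the proposal and the paper coincide.

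You deserve credit for flagging the sign issue, because it is real: if $\lambda = \v^\top\nabla^2 f(\x)\v > 0$ then $\tfrac{\eta^2}{2}\lambda = +\tfrac{2|\lambda|^3}{L_2^2}$ rather than $-\tfrac{2|\lambda|^3}{L_2^2}$, and the cubic term also works against you, so the NC branch need not decrease $f$ at all. The problem is that your attempted resolution does not actually close the gap. The branch-selection condition $\tfrac{2|\lambda|^3}{3L_2^2} > \tfrac{\|\nabla f(\x)\|^2}{2L_1}$ only compares magnitudes; it does not imply $\lambda\le 0$. Lanczos applied to $L_1 I - H$ drives $\v^\top H\v$ toward $\lambda_{\min}(H)$, but if $\lambda_{\min}(H)$ is itself strictly positive (e.g.\ near a nondegenerate local minimum) the returned $\lambda$ is positive, and if it is large enough the algorithm as written will still take the NC branch. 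Saying "in the Lanczos output it is the relevant case" is circular, not a proof.

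Worth knowing: the paper's own appendix proof has exactly the same silent assumption — the line stating
$-\tfrac{1}{2}\eta_j^2\v_j^\top\nabla^2 f(\x_j)\v_j - \tfrac{L_2}{6}\|\eta_j\v_j\|^3 = \tfrac{2|\v_j^\top\nabla^2 f(\x_j)\v_j|^3}{3L_2^2}$
is an identity only when $\v_j^\top\nabla^2 f(\x_j)\v_j\le 0$; for $\lambda>0$ the left side equals $-\tfrac{10\lambda^3}{3L_2^2}$, which is negative. So you matched the paper's reasoning including its gap; the honest fix is either to add the guard $\v^\top\nabla^2 f(\x)\v < 0$ to the NC branch condition in Algorithm 3 (so the lemma holds exactly as stated), or to state the lemma only under the hypothesis $\v_j^\top\nabla^2 f(\x_j)\v_j\le 0$, which is what all of the downstream uses in Theorem 1 actually rely on in Cases 1 and 2.
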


 \vspace*{0.2in}
\begin{proof}[Proof of Theorem~\ref{lemma:GNC-A}]
Let $j_*$ denote the $j$ such that the algorithm terminates. Then for all $j<j_*$, we have $\|\nabla f(\x_j)\|> \epsilon_1$, or $\v_j^{\top}\nabla^2 f(\x_j)\v_j\leq  - \epsilon_2/2$. 
According to Lemma~\ref{lem:NCG}, we have
\begin{align*}
f(\x_j) - f(\x_{j+1})\geq \max\left( \frac{2|\v_j^{\top}\nabla ^2 f(\x_j)\v_j|^3}{3L_2^2}, \frac{\|\nabla f(\x_j)\|^{2}}{2L_1}\right)
\end{align*}
Let us consider three cases. Case 1: $\|\nabla f(\x_j)\|> \epsilon_1$ and $\v_j^{\top}\nabla^2 f(\x_j)\v_j\leq  -\epsilon_2/2$, then we have
\begin{align*}
\max\left(\frac{\epsilon_2^3}{12L_2^2}, \frac{\epsilon_1^{2}}{2L_1}\right)\leq f(\x_j) - f(\x_{j+1})
\end{align*}
Case 2:  $\|\nabla f(\x_j)\|\leq\epsilon_1$ and $\v_j^{\top}\nabla^2 f(\x_j)\v_j\leq  -\epsilon_2/2$, we have
\begin{align*}
\frac{\epsilon_2^3}{12L_2^2}\leq f(\x_j) - f(\x_{j+1})
\end{align*}
Case 3:  $\|\nabla f(\x_j)\|> \epsilon_1$ and $\v_j^{\top}\nabla^2 f(\x_j)\v_j>  - \epsilon_2/2$, we have
\begin{align*}
\frac{\epsilon_1^2}{2L_1}\leq f(\x_j) - f(\x_{j+1})
\end{align*}
In any case, we have
\begin{align*}
\min\left(\frac{\epsilon_1^2}{2L_1}, \frac{\epsilon_2^3}{12L_2^2}\right)\leq f(\x_j) - f(\x_{j+1})
\end{align*}
Then with at most $j_* = 1 +  \max\left(\frac{12L_2^2}{\epsilon_2^3}, \frac{2L_1}{\epsilon_1^2}\right)\Delta$, the algorithm terminates. Upon termination, we have with probability at least $1-j_*\delta'=1-\delta$, 
\[
\lambda_{\min}(\nabla^2 f(\x_{j_*}))\geq -\epsilon_2/2 - \max(\epsilon_2, \|\nabla f(\x_{j_*})\|)/2.
\]
Since we have $\max(\epsilon_2, \|\nabla f(\x_{j_*})\|) \leq \max(\epsilon_2, \epsilon_1)$,  therefore $\lambda_{\min}(\nabla^2 f(\x_{j_*}))\geq -\epsilon_2/2 - \max(\epsilon_2, \epsilon_1)/2$. If $\epsilon_1>\epsilon_2$, we have $\lambda_{\min}(\nabla^2 f(\x_{j_*}))\geq - \frac{\epsilon_1 + \epsilon_2}{2}\geq - \epsilon_1$. Otherwise, we have $\lambda_{\min}(\nabla^2 f(\x_{j_*}))\geq - \epsilon_2$.

The running time spent on the $j$-th iteration follows from Lemma~\ref{lemma:lanczos}.

\end{proof}

\subsection{NCG-A2 with even reduced per-iteration complexity}\label{sec:NCGb}
In this subsection and next section, we focus on improving the time complexity of NCG-A1 for $\epsilon_2>\epsilon_1$ trying to further reduce the per-iteration cost and to match the best time complexity in literature for $\epsilon_2 = \sqrt{\epsilon_1}$.  Indeed, we will consider all values of $\epsilon_2\in[\epsilon_1, 1)$. To this end, we characterize $\epsilon_2 = \epsilon_1^\alpha$ with $\alpha\in(0, 1]$. In this subsection, we will present  a variant of NCG-A1 having a lower time complexity for each iteration. In particular, the time complexity of each iteration is given by $\widetilde O(1/\sqrt{\max(\epsilon_2, \|\nabla f(\x_t)\|^\alpha)})$ for $\alpha\in(0, 1]$. 
The variant NCG-A2 is presented 
in Algorithm~\ref{alg:gnc-b}, which is very similar to NCG-A1 with slight modification on the noise parameter of each NCG step.
\begin{algorithm}[t]
\caption{NCG-A2: $(\x_0, \epsilon_1, \alpha, \delta)$}\label{alg:gnc-b}
\begin{algorithmic}[1]
\STATE \textbf{Input}:  $\x_0$, $\epsilon_1, \alpha$, $\delta$
\STATE $\x_1=\x_0$, $\epsilon_2 = \epsilon_1^\alpha$,  $\delta' = \delta /(1+\max\left(\frac{12L_2^2}{\epsilon_2^3}, \frac{2L_1}{\epsilon_1^2}\right)\Delta)$, 
\FOR{$j=1,2,\ldots,$}
\STATE  $(\x_{j+1},\v_j) = \text{NCG}(\x_j,  \max(\epsilon_2, \|\nabla f(\x_j)\|^\alpha)/2,  \delta')$

\IF{ $\v_j^{\top}\nabla^2 f(\x_j)\v_j> -\epsilon_2/2$ and $\|\nabla f(\x_j)\|\leq \epsilon_1$}
\RETURN $\x_j$
\ENDIF
\ENDFOR
\end{algorithmic}
\end{algorithm}

\begin{theorem}
\label{thm:gnc-a2:iteration}
	For any $\alpha\in(0, 1]$, the NCG-A2 algorithm terminates at iteration $j_*$ for some 
\begin{equation}
\label{eqn:bound-GNC-B}
j_*\leq 1 + \max\left(\frac{12L_2^2}{\epsilon_1^{3\alpha}}, \frac{2L_1}{\epsilon_1^2}\right)(f(\x_1) - f(\x_{j_*}))\leq 1 +  \max\left(\frac{12L_2^2}{\epsilon_1^{3\alpha}}, \frac{2L_1}{\epsilon_1^2}\right)\Delta,
\end{equation}
with $\|\nabla f(\x_{j_*})\|\leq \epsilon_1$, and with probability at least $1-\delta$
\[
\lambda_{\min}(\nabla^2 f(\x_{j_*}))\geq -\epsilon_1^\alpha
\]
Furthermore, the $j$-th iteration requires time at most 
\[
O\left(T_h\frac{\sqrt{L_1}}{\max(\epsilon_1^\alpha, \|\nabla f(\x_j)\|^\alpha)^{1/2}}\log\left(\frac{d}{\delta'}\right)\right)
\]
\end{theorem}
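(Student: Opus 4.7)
The plan is to mirror the proof of Theorem~\ref{lemma:GNC-A} with the only substantive change being the exponent $\alpha$ in the noise parameter of each NCG call, and to verify that the eigenvalue guarantee still closes at termination because $\|\nabla f(\x_{j_*})\|\le\epsilon_1$ forces $\|\nabla f(\x_{j_*})\|^\alpha\le\epsilon_1^\alpha=\epsilon_2$. The three ingredients to invoke are Lemma~\ref{lem:NCG} (per-step descent of the NCG update), Lemma~\ref{lemma:lanczos} (runtime of a single Lanczos call), and a union bound over iterations with the pre-set $\delta'$.

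First I will apply Lemma~\ref{lem:NCG} at every step $j<j_*$ to obtain
\[
f(\x_j)-f(\x_{j+1})\ \ge\ \max\!\left(\frac{2|\v_j^\top\nabla^2 f(\x_j)\v_j|^3}{3L_2^2},\ \frac{\|\nabla f(\x_j)\|^2}{2L_1}\right).
\]
The termination test in line~5 fails iff $\|\nabla f(\x_j)\|>\epsilon_1$ or $\v_j^\top\nabla^2 f(\x_j)\v_j\le -\epsilon_2/2$. Splitting into the same three cases as in the proof of Theorem~\ref{lemma:GNC-A} and lower bounding the RHS above with $\epsilon_1$ (resp.\ $\epsilon_2/2$) plugged in for whichever quantity is certified to be large, I get a uniform per-iteration decrease of at least $\min\!\bigl(\epsilon_1^2/(2L_1),\ \epsilon_2^3/(12L_2^2)\bigr)=\min\!\bigl(\epsilon_1^2/(2L_1),\ \epsilon_1^{3\alpha}/(12L_2^2)\bigr)$, using $\epsilon_2=\epsilon_1^\alpha$. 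Telescoping against $f(\x_1)-f(\x_{j_*})\le\Delta$ yields the iteration bound (\ref{eqn:bound-GNC-B}).

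Next I handle the eigenvalue guarantee at the termination index $j_*$. Since line~5 is now true, $\v_{j_*}^\top\nabla^2 f(\x_{j_*})\v_{j_*}>-\epsilon_2/2$ and $\|\nabla f(\x_{j_*})\|\le\epsilon_1$, so the noise parameter used by that call of NCG is $\max(\epsilon_2,\|\nabla f(\x_{j_*})\|^\alpha)/2\le\max(\epsilon_2,\epsilon_1^\alpha)/2=\epsilon_2/2$. The Lanczos guarantee built into Algorithm~\ref{alg:gnc} then gives, with probability at least $1-\delta'$,
\[
\lambda_{\min}(\nabla^2 f(\x_{j_*}))\ \ge\ \v_{j_*}^\top\nabla^2 f(\x_{j_*})\v_{j_*}-\epsilon_2/2\ >\ -\epsilon_2=-\epsilon_1^\alpha.
\]
A union bound over the at most $j_*\le 1+\max(12L_2^2/\epsilon_1^{3\alpha},2L_1/\epsilon_1^2)\Delta$ Lanczos calls, with $\delta'$ chosen as in line~2, produces the overall $1-\delta$ confidence.

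Finally, the per-iteration runtime follows directly from Lemma~\ref{lemma:lanczos} applied to the noise level $\max(\epsilon_2,\|\nabla f(\x_j)\|^\alpha)/2$ used by the $j$-th NCG call, giving $O\!\bigl(T_h\sqrt{L_1}/\max(\epsilon_1^\alpha,\|\nabla f(\x_j)\|^\alpha)^{1/2}\cdot\log(d/\delta')\bigr)$, which is exactly the stated bound (with gradient evaluation absorbed into $T_h$). The only subtle point, and the part that deserves explicit comment in the write-up, is the substitution chain $\max(\epsilon_2,\|\nabla f(\x_{j_*})\|^\alpha)\le\max(\epsilon_2,\epsilon_1^\alpha)=\epsilon_2$ at termination: this is what prevents the weaker exponent $\alpha$ in the noise parameter from polluting the final eigenvalue guarantee, and it relies crucially on $\alpha\le 1$ together with $\|\nabla f(\x_{j_*})\|\le\epsilon_1$. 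Everything else is a line-by-line transliteration of the NCG-A1 argument, so I do not expect any real obstacle beyond keeping this substitution clean.
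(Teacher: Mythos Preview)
Your proposal is correct and matches the paper's own proof essentially line for line: invoke Lemma~\ref{lem:NCG} for the per-step decrease, split into the same three cases to get the uniform $\min(\epsilon_1^2/(2L_1),\epsilon_1^{3\alpha}/(12L_2^2))$ lower bound, telescope for the iteration count, and at termination use $\|\nabla f(\x_{j_*})\|\le\epsilon_1\Rightarrow\|\nabla f(\x_{j_*})\|^\alpha\le\epsilon_1^\alpha$ so the Lanczos noise is exactly $\epsilon_2/2$ and the eigenvalue bound closes, with the union bound and Lemma~\ref{lemma:lanczos} finishing runtime. One minor quibble: the substitution $\|\nabla f(\x_{j_*})\|^\alpha\le\epsilon_1^\alpha$ only needs $\alpha>0$ (monotonicity of $x\mapsto x^\alpha$ on $[0,\infty)$), not $\alpha\le 1$ as you wrote, but this does not affect the argument.
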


{\bf Remark: } Let $\epsilon_2= \epsilon_1^\alpha$.  First, the algorithm finds a solution satisfying $\|\nabla f(\x_j)\|\leq \epsilon_1$ and $\lambda_{\min}(\nabla ^2 f(\x_j))\geq - \epsilon_2$ with high probability using $O\left(\frac{1}{\epsilon_1^2} + \frac{1}{\epsilon_2^3}\right)$ iterations, which is the same iteration complexity as NCG-A1. The difference is that each iteration of NCG-A2 will require a less number of Hessian-vector products when $\alpha<1$ and $\|\nabla f(\x_j)\|\leq 1$. This is because that the time complexity of each iteration is $\widetilde O\left(T_h\frac{\sqrt{L_1}}{\max(\epsilon_2, \|\nabla f(\x_j)\|^\alpha)^{1/2}}\right)$, which could be much less than $\widetilde O\left(T_h\frac{\sqrt{L_1}}{\max(\epsilon_2, \|\nabla f(\x_j)\|)^{1/2}}\right)$ for each iteration of NCG-A1 when $\alpha<1$ and $\|\nabla f(\x_j)\|\leq 1$. The proof of the above Theorem is very similar to that of Theorem~\ref{lemma:GNC-A} and is included in the appendix.  

\begin{algorithm}[t]
	\caption{NCG-B1: $(\x_0, \epsilon_1, \epsilon_2, \delta)$}\label{alg:gnc-a3}
	\begin{algorithmic}[1]
		\STATE \textbf{Input}:  $\x_0$, $\epsilon_1$, $\epsilon_2$, $\delta$
		\STATE $K:=\lceil1+\Delta\left(\frac{\max(12L_2^2, 2L_1)}{\epsilon_2^3}+\frac{2\sqrt{10}L_2}{\epsilon_1\epsilon_2}\right)\rceil$, $\delta':=\delta/K$, 
		\FOR{$k=1,2,\ldots,$}
		\STATE $\xh_k = \text{NCG-A1}(\x_k, \epsilon_2^{3/2}, \epsilon_2, \delta')$
		\IF{$\|\nabla f(\xh_k)\|\leq  \epsilon_1$}
		\RETURN $\xh_k$
		\ELSE
		\STATE  Set $f_k(\x) = f(\x) + L_1\left([\|\x - \xh_j\| - \epsilon_2/L_2]_+\right)^2$
		\STATE $\x_{k+1} = \text{Almost-Convex-AGD}(f_k, \xh_k, \epsilon_1/2, 3\epsilon_2, 5L_1)$
		\ENDIF
		
		
		\ENDFOR
	\end{algorithmic}
\end{algorithm}

\begin{algorithm}[t]
	\caption{NCG-B2: $(\x_0, \epsilon_1, \alpha, \delta)$}\label{alg:gnc-a4}
	\begin{algorithmic}[1]
		\STATE \textbf{Input}:  $\x_0$, $\epsilon_1$, $\alpha$, $\delta$
		\STATE $\epsilon_2=\epsilon_1^\alpha$, $K:=\lceil1+\Delta\left(\frac{\max(12L_2^2, 2L_1)}{\epsilon_2^3}+\frac{2\sqrt{10}L_2}{\epsilon_1\epsilon_2}\right)\rceil$, $\delta':=\delta/K$, 
		\FOR{$k=1,2,\ldots,$}
		\STATE $\xh_k = \text{NCG-A2}(\x_k, \epsilon_1^{3\alpha/2}, \frac{2}{3}, \delta')$
		\IF{$\|\nabla f(\xh_k)\|\leq  \epsilon_1$}
		\RETURN $\xh_k$
		\ELSE
		\STATE  Set $f_k(\x) = f(\x) + L_1\left([\|\x - \xh_k\| - \epsilon_2/L_2]_+\right)^2$
		\STATE $\x_{k+1} = \text{Almost-Convex-AGD}(f_j, \xh_k, \epsilon_1/2, 3\epsilon_2, 5L_1)$
		\ENDIF
		
		
		\ENDFOR
	\end{algorithmic}
\end{algorithm}
\section{NCG-B: alternating NCG steps and accelerated GD steps}\label{sec:NCGc}
In this section, we present the second algorithm NCG-B that matches the best time complexity in literature for finding an $(\epsilon, \sqrt{\epsilon})$-second-order stationary point. The algorithm is built on the technique in~\citep{DBLP:journals/corr/CarmonDHS16}, i.e., alternatively finding a solution around which the function is almost convex using the NCD algorithm (i.e., the smallest eigen-value of the Hessian is small) and using accelerated gradient method for a slightly perturbed almost convex function to decrease the magnitude of the gradient. The key difference is that we use NCG-A1 or NCG-A2 instead of the NCD algorithm to reach a region where the function is locally almost convex. Therefore, we have two variants of NCG-B that employ NCG-A1 or NCG-A2 as a subroutine  finding a solution around which the function is almost convex. The first variant NCG-B1 is presented in Algorithm~\ref{alg:gnc-a3}.  The procedure \text{Almost-Convex-AGD} is the same as in \citep{DBLP:journals/corr/CarmonDHS16}. For completeness, we present it in the appendix. The convergence guarantee is presented below. 
\begin{theorem}
\label{thm:gnc-a3:iteration}
	With probability at least $1-\delta$, the Algorithm NCG-B1 returns a vector $\xh_k$ such that $\|\nabla f(\xh_k)\|\leq\epsilon_1$ and $\lambda_{\text{min}}(\nabla^2 f(\xh_k))\geq -\epsilon_2$ with at most $O\left(\frac{1}{\epsilon_2^3}+\frac{1}{\epsilon_1\epsilon_2}\right)$ NCG steps in NCG-A1 and 
	$\widetilde O\left[\left(\frac{1}{\epsilon_2^{7/2}}+\frac{1}{\epsilon_1\epsilon_2^{3/2}}\right)+\frac{\epsilon_2^{1/2}}{\epsilon_1^2}\right]$
    gradient steps in Almost-Convex-AGD, and each step $j$ within NCG-A1 requires time at most  
	\[
	O\left(T_h\frac{\sqrt{L_1}}{\max(\epsilon_2, \|\nabla f(\x_j)\|)^{1/2}}\log\left(\frac{d}{\delta'}\right)\right)
	\]
and the worse-case time complexity of NCG-B1 is $\widetilde O\left(\left(\frac{1}{\epsilon_1\epsilon_2^{3/2}} +\frac{1}{\epsilon_2^{7/2}}\right) T_h+\frac{\epsilon_2^{1/2}}{\epsilon_1^2}T_g\right)$
\end{theorem}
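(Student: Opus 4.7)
The plan is to follow the alternating framework of~\citep{DBLP:journals/corr/CarmonDHS16} but with NCG-A1 playing the role of the negative curvature descent. Correctness is essentially immediate: when the algorithm returns $\xh_k$, the check in Step~5 gives $\|\nabla f(\xh_k)\|\leq\epsilon_1$, while Theorem~\ref{lemma:GNC-A} applied to the call $\text{NCG-A1}(\x_k,\epsilon_2^{3/2},\epsilon_2,\delta')$ guarantees $\lambda_{\min}(\nabla^2 f(\xh_k))\geq -\max(\epsilon_2^{3/2},\epsilon_2)=-\epsilon_2$ with probability at least $1-\delta'$ (assuming $\epsilon_2\leq 1$). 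A union bound over the at most $K$ outer iterations, using $\delta'=\delta/K$, will give the overall $1-\delta$ probability guarantee.

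The core of the argument is to bound the number of outer iterations and the inner NCG/AGD work by amortising against the total function decrease $\Delta$. First, by the Lipschitz continuity of $\nabla^2 f$, the bound $\lambda_{\min}(\nabla^2 f(\xh_k))\geq -\epsilon_2$ implies that $f$ is $2\epsilon_2$-almost convex on the Euclidean ball of radius $\epsilon_2/L_2$ around $\xh_k$, and a direct check shows that the smoothed penalty in $f_k$ makes $f_k$ globally $3\epsilon_2$-almost convex and $5L_1$-smooth (this is exactly the ``almost-convex proxy'' Lemma in~\citep{DBLP:journals/corr/CarmonDHS16}). Hence \text{Almost-Convex-AGD} invoked with parameters $(\epsilon_1/2,3\epsilon_2,5L_1)$ returns $\x_{k+1}$ with $\|\nabla f_k(\x_{k+1})\|\leq\epsilon_1/2$. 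Then I would invoke the standard Carmon~et~al.\ decrease lemma: either $\x_{k+1}$ stays inside the ball (so $\nabla f_k=\nabla f$ and we will terminate at the next outer iteration), or $\x_{k+1}$ leaves the ball and the combination of almost-convexity plus the penalty gap gives $f(\xh_k)-f(\x_{k+1})\geq \Omega(\epsilon_1\epsilon_2/L_2)$. In either case, each outer round that does not terminate produces a strict $\Omega(\epsilon_1\epsilon_2)$ decrease in $f$, yielding at most $K=O\!\left(1+\Delta\bigl(L_2/(\epsilon_1\epsilon_2)\bigr)\right)$ outer iterations, matching the choice of $K$ in the algorithm.

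Next I would count the inner work separately. For NCG-A1 inside round $k$, Lemma~\ref{lem:NCG} shows each NCG step that is not the last decreases $f$ by at least $\min\{\epsilon_2^3/(12L_2^2),(\epsilon_2^{3/2})^2/(2L_1)\}=\Omega(\epsilon_2^3)$, so the NCG steps that actually make progress total $O(\Delta/\epsilon_2^3)$ across \emph{all} outer rounds, while at most one ``terminating'' NCG step is spent per round. Summing gives $O(1/\epsilon_2^3+K)=O(1/\epsilon_2^3+1/(\epsilon_1\epsilon_2))$ NCG steps, as claimed. Each such step costs $O\!\bigl(T_h\sqrt{L_1}/\sqrt{\max(\epsilon_2,\|\nabla f(\x_j)\|)}\,\log(d/\delta')\bigr)$ by Theorem~\ref{lemma:GNC-A}; in the worst case this is $\widetilde O(T_h/\sqrt{\epsilon_2})$, so multiplying yields $\widetilde O\!\bigl((1/\epsilon_2^{7/2}+1/(\epsilon_1\epsilon_2^{3/2}))T_h\bigr)$ for all NCG work. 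For AGD, I would use the complexity result from~\citep{DBLP:journals/corr/CarmonDHS16} which states that \text{Almost-Convex-AGD} on a $5L_1$-smooth $3\epsilon_2$-almost-convex function to gradient tolerance $\epsilon_1/2$ takes $\widetilde O(\sqrt{L_1/\epsilon_2}+\sqrt{L_1}\epsilon_2^{1/2}/\epsilon_1)$ gradient evaluations per call; multiplying by $K$ and simplifying yields the announced $\widetilde O\!\left(1/\epsilon_2^{7/2}+1/(\epsilon_1\epsilon_2^{3/2})+\epsilon_2^{1/2}/\epsilon_1^2\right)$ gradient steps and time $\widetilde O(\epsilon_2^{1/2}T_g/\epsilon_1^2)$ from the last term.

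The main obstacle I anticipate is not the counting itself but the Carmon~et~al.\ ``exit from the ball implies large decrease'' lemma: one must verify that the particular soft penalty $L_1([\|\x-\xh_k\|-\epsilon_2/L_2]_+)^2$ preserves $5L_1$-smoothness, makes $f_k$ globally $3\epsilon_2$-almost convex given only $\lambda_{\min}(\nabla^2 f(\xh_k))\geq -\epsilon_2$ (as opposed to the stronger guarantee NCD would give), and that the resulting gradient bound on $f_k$ transfers back to the required decrease of $f$. Once these three properties are in place, the remaining bookkeeping is a direct plug-in of Theorem~\ref{lemma:GNC-A}, Lemma~\ref{lem:NCG}, and the AGD complexity, combined with a final union bound over $K$ rounds.
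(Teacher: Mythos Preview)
Your overall architecture is exactly the paper's, but two steps of the accounting do not go through as written.

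\textbf{Outer-loop count.} Your dichotomy ``$\x_{k+1}$ stays in the ball $\Rightarrow$ terminate next round; $\x_{k+1}$ leaves the ball $\Rightarrow$ $f(\xh_k)-f(\x_{k+1})\geq\Omega(\epsilon_1\epsilon_2)$'' is incomplete on both branches. Even when $\x_{k+1}$ lies in the ball (so $\|\nabla f(\x_{k+1})\|\leq\epsilon_1/2$), the next call to NCG-A1 need not return immediately: its stopping test also requires $\v^\top\nabla^2 f(\x_{k+1})\v>-\epsilon_2/2$, and if the curvature is bad NCG-A1 takes further steps, producing $\xh_{k+1}\neq\x_{k+1}$ with only an $\Omega(\epsilon_2^3)$ guarantee on $f(\x_{k+1})-f(\xh_{k+1})$. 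Likewise, the progressive bound of Almost-Convex-AGD is a minimum, so the ``leaves the ball'' branch only yields $\Omega\bigl(\min(\epsilon_2^3,\epsilon_1\epsilon_2)\bigr)$. The per-round decrease is therefore $\Omega\bigl(\min(\epsilon_2^3,\epsilon_1\epsilon_2)\bigr)$, giving $K=O\bigl(1/\epsilon_2^3+1/(\epsilon_1\epsilon_2)\bigr)$---which is the $K$ the algorithm actually uses, not $O(1/(\epsilon_1\epsilon_2))$. The paper handles this by an explicit case split on whether $\xh_k=\x_k$ (NCG-A1 did no work, use the AGD progressive bound from the previous round) or $\xh_k\neq\x_k$ (at least one productive NCG step, use the $\Omega(\epsilon_2^3)$ decrease directly).

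\textbf{AGD work.} The per-call bound you invoke, $\widetilde O(\sqrt{L_1/\epsilon_2}+\sqrt{L_1\epsilon_2}/\epsilon_1)$, is not what the Carmon--Duchi lemma (Proposition~\ref{prop:almostconvexagd}) provides; the correct bound is $\widetilde O\bigl(\sqrt{L_1/\epsilon_2}+(\sqrt{L_1\epsilon_2}/\epsilon_1^2)\,[f_k(\xh_k)-f_k(\x_{k+1})]\bigr)$, with the second term proportional to the realized decrease. Multiplying your expression by $K$ gives $1/(\epsilon_1^2\epsilon_2^{1/2})$ for the last term, a factor $1/\epsilon_2$ too large. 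The way the $\epsilon_2^{1/2}/\epsilon_1^2$ term actually arises is by \emph{telescoping} the decrease-dependent part across all rounds, using $\sum_k[f_k(\xh_k)-f_k(\x_{k+1})]\leq\sum_k[f(\xh_k)-f(\x_{k+1})]\leq\Delta$; the fixed $\sqrt{L_1/\epsilon_2}$ part is what gets multiplied by $K$ and produces the $1/\epsilon_2^{7/2}+1/(\epsilon_1\epsilon_2^{3/2})$ terms. Your NCG-step count (progress steps telescoped to $O(\Delta/\epsilon_2^3)$ plus one terminating step per round) is correct once $K$ is fixed as above.
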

{\bf Remark:} First, we note that the subroutine $\text{NCG-A1}(\x_j, \epsilon_2^{3/2}, \epsilon_2, \delta')$ provides the same guarantee as the NCD in~\citep{DBLP:journals/corr/CarmonDHS16} (see Corollary~\ref{cor:GNC-A}), i.e., returning a solution  $\xh_j$ satisfying $\lambda_{\min}(\nabla^2 f(\xh_j))\geq -\epsilon_2$ with high probability. Therefore the proof of the above theorem  follows similar analysis as in~\citep{DBLP:journals/corr/CarmonDHS16}. For completeness, we present a proof in the appendix.  Second, the number of iterations within NCG-A1 is the same as that in NCD employed by~\citep{DBLP:journals/corr/CarmonDHS16}, and the number of iterations within Almost-Convex-AGD is the same as that in~\citep{DBLP:journals/corr/CarmonDHS16}. The improvement of NCG-B1 over \citep{DBLP:journals/corr/CarmonDHS16}'s algorithm is brought by reducing the number of Hessian-vector products for performing each iteration of NCG-A1. Third, when $\epsilon_2\leq \sqrt{\epsilon_1}$, the worst-case time complexity of NCG-B1 is $\widetilde O\left(\frac{1}{\epsilon_1\epsilon_2^{3/2}} +\frac{1}{\epsilon_2^{7/2}}\right) T_h$. In particular, for $\epsilon_2=\sqrt{\epsilon_1}$ it reduces to $\widetilde O(T_h/\epsilon^{7/4})$, which matches the best time complexity in previous results. Finally, we note that NCG-B1 has the same worse-case time complexity as NCG-A1 for $\epsilon_2\in[\epsilon_1, \epsilon_1^{2/3}]$, but improves over NCG-A1 for $\epsilon_2\in[ \epsilon_1^{2/3}, \epsilon_1^{1/2}]$.


The second variant of NCG-B2 is presented in Algorithm~\ref{alg:gnc-a4}, employing NCG-A2 as the subroutine. Its convergence guarantee is presented below.
\begin{theorem}
\label{thm:gnc-a3':iteration}
	For any $\alpha\in(0, 1]$, let $\epsilon_2 = \epsilon_1^{\alpha}$.	With probability at least $1-\delta$, the Algorithm NCG-B2 returns a vector $\xh_k$ such that $\|\nabla f(\xh_k)\|\leq\epsilon_1$ and $\lambda_{\text{min}}(\nabla^2 f(\xh_k))\geq -\epsilon_2$ with at most $O\left(\frac{1}{\epsilon_2^3}+\frac{1}{\epsilon_1\epsilon_2}\right)$ NCG steps in NCG-A2 and  
	$\widetilde O\left[\left(\frac{1}{\epsilon_2^{7/2}}+\frac{1}{\epsilon_1\epsilon_2^{3/2}}\right)+\frac{\epsilon_2^{1/2}}{\epsilon_1^2}\right]$
    gradient steps
    in Almost-Convex-AGD, and each step $j$ within NCG-A2 requires time at most  
	\[
	O\left(T_h\frac{\sqrt{L_1}}{\max(\epsilon_2, \|\nabla f(\x_j)\|^{2/3})^{1/2}}\log\left(\frac{d}{\delta'}\right)\right)
	\]
and the worse-case time complexity of NCG-B2 is $\widetilde O\left(\left(\frac{1}{\epsilon_1\epsilon_2^{3/2}} +\frac{1}{\epsilon_2^{7/2}}\right) T_h+\frac{\epsilon_2^{1/2}}{\epsilon_1^2}T_g\right)$
\end{theorem}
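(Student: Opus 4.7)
The plan is to mimic the proof of Theorem~\ref{thm:gnc-a3:iteration} (for NCG-B1), since NCG-B2 only differs in the subroutine used to reach the almost-convex region, replacing NCG-A1 by NCG-A2 with $\alpha = 2/3$. The first step is to verify that the subroutine call $\text{NCG-A2}(\x_k, \epsilon_1^{3\alpha/2}, 2/3, \delta')$ returns the same guarantee as the corresponding call in NCG-B1. By Theorem~\ref{thm:gnc-a2:iteration} applied with input first-order tolerance $\widetilde\epsilon_1 = \epsilon_2^{3/2}$ and power $2/3$ (so that the induced second-order tolerance is $\widetilde\epsilon_1^{2/3} = \epsilon_2$), the returned $\xh_k$ satisfies $\|\nabla f(\xh_k)\| \leq \epsilon_2^{3/2}$ and, with probability at least $1-\delta'$, $\lambda_{\min}(\nabla^2 f(\xh_k)) \geq -\epsilon_2$. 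This is exactly the input precondition for the Almost-Convex-AGD step used in~\citep{DBLP:journals/corr/CarmonDHS16}, so the outer framework analysis transfers verbatim.

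Second, I would count the work. The total count of NCG steps across all NCG-A2 calls is bounded by the same sufficient-decrease argument as in NCG-B1: each NCG step reduces $f$ by at least $\min(\epsilon_1^2/(2L_1), \epsilon_2^3/(12L_2^2))$ (by Lemma~\ref{lem:NCG} with the current noise level), and the total function decrease is at most $\Delta$; the extra $O(1/(\epsilon_1\epsilon_2))$ term comes from the bookkeeping of AGD outer iterations exactly as in Theorem~\ref{thm:gnc-a3:iteration}. The number of gradient steps inside Almost-Convex-AGD is untouched from the Carmon et al. analysis, yielding the claimed $\widetilde O[(\epsilon_2^{-7/2} + \epsilon_1^{-1}\epsilon_2^{-3/2}) + \epsilon_2^{1/2}\epsilon_1^{-2}]$ bound.

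Third, I would plug in the per-iteration cost. Here is where the proof genuinely differs from NCG-B1: by Theorem~\ref{thm:gnc-a2:iteration} with $\alpha = 2/3$, each NCG step inside NCG-A2 costs $\widetilde O\bigl(T_h \sqrt{L_1}/\max(\epsilon_2, \|\nabla f(\x_j)\|^{2/3})^{1/2}\bigr)$. This immediately gives the stated per-iteration bound. For the worst-case total complexity, I would observe that since NCG-A2 only terminates once $\|\nabla f(\x_j)\| \leq \epsilon_2^{3/2}$, during its execution $\|\nabla f(\x_j)\|^{2/3} \geq \epsilon_2$ may or may not hold, but in either case the per-iteration cost is upper-bounded by $\widetilde O(T_h\sqrt{L_1}/\sqrt{\epsilon_2})$. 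Multiplying by the NCG step count yields $\widetilde O((\epsilon_2^{-7/2} + \epsilon_1^{-1}\epsilon_2^{-3/2}) T_h)$, and adding the AGD gradient work $\widetilde O(\epsilon_2^{1/2}\epsilon_1^{-2} T_g)$ gives the announced worst-case time. A union bound over $K$ failure probabilities $\delta'$ delivers the high-probability statement.

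The only non-routine step is the verification that the chosen parameters $(\epsilon_2^{3/2}, 2/3)$ of the NCG-A2 call produce the precondition needed by Almost-Convex-AGD (i.e., $\lambda_{\min}(\nabla^2 f(\xh_k)) \geq -\epsilon_2$ and small enough gradient), and that with $\alpha = 2/3$ the improved per-iteration cost of NCG-A2 does not disturb the telescoping function-decrease argument. Once these are in place, the remainder is bookkeeping identical to Theorem~\ref{thm:gnc-a3:iteration}; a full proof can therefore be deferred to the appendix by reference to the NCG-B1 analysis, highlighting only the $\alpha = 2/3$ substitution in the per-iteration cost formula.
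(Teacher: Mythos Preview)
Your proposal is correct and matches the paper's approach essentially verbatim: the paper's proof consists of stating a corollary of Theorem~\ref{thm:gnc-a2:iteration} (that $\text{NCG-A2}(\x_k,\epsilon_1^{3\alpha/2},2/3,\delta')$ yields $\lambda_{\min}(\nabla^2 f(\xh_k))\geq -\epsilon_2$ with the stated iteration and per-step bounds) and then deferring entirely to the proof of Theorem~\ref{thm:gnc-a3:iteration}. Your sketch is in fact more explicit than the paper's own argument; the only minor slip is that the per-step decrease inside the NCG-A2 subroutine is $\min(\epsilon_2^3/(2L_1),\epsilon_2^3/(12L_2^2))$ (since the first-order tolerance there is $\epsilon_2^{3/2}$, not $\epsilon_1$), but this does not affect your final count.
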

{\bf Remark:} The improvement of NCG-B2 over NCG-B1 lies at the reduced per-iteration cost for each NCG step when $\|\nabla f(\x_j)\|\leq 1$.

\section{Extensions and Discussions}\label{sec:ext}
In this section, we will present variants of NCG-based algorithms that use inexact Hessian at each iteration requiring the second-order information and establish their complexities same as the variants with exact Hessian. We also present a pure stochastic algorithm and estabilsh its convergence guarantee. Detailed comparisons with two recent works~\citep{peng16inexacthessian,clement17} are also provied.  Finally, we discuss the applications of the proposed algorithms for solving some strict-saddle functions and their time complexities.


\subsection{Variants of NCG-based algorithms with inexact Hessian}
To derive the same complexity results of the variants with inexact Hessian, we assume a regularity condition on the inexact Hessian $H(\x_j)$ at any point $\x_j$. 
\begin{ass}\label{ass:iH}
For any $\epsilon_2\in(0, 1)$, assume there exists $M<\infty, \epsilon_3\leq \epsilon_2/12$ such that at any points $\x_j\in\R^d$ we have
\begin{align}
    \|(H(\x_j) - \nabla^2 f(\x_j))\|_2&\leq \epsilon_3\label{eqn:ine}\\
    \|H(\x_j)\|&\leq M.
\end{align}
\end{ass}
One might notice that  the first condition seems to be stringent than that in~\citep{peng16inexacthessian}, i.e., $\|(H(\x_j) - \nabla^2 f(\x_j))\s_t\|\leq \epsilon_3\|\s_t\|$. Nevertheless, we aim to establish a stronger second-order convergence result. In particular, we prove that $\lambda_{\min}(\nabla^2 f(\x_j))\geq -\epsilon_2$ upon termination, while the result in~\citep{peng16inexacthessian} only guarantees that $\lambda_{\min}(H(\x_j))\geq -\epsilon_2$ upon termination of their algorithms. It seems that the condition in~(\ref{eqn:ine}) is inevitable to ensure $\lambda_{\min}(\nabla^2 f(\x_j))\geq -\epsilon_2$ for the returned solution. 

The variants of the NCG-step and the NCG-A1 algorithm employing the inexact Hessian under the Assumption~\ref{ass:iH} are presented in Algorithm~\ref{alg:gnc*}, \ref{alg:gncA*}, respectively. The variants of NCG-A2, NCG-B1, NCG-B2 employing the inexact Hessian can be developed similarly, and hence are omitted.   The convergence guarantee of Algorithm~\ref{alg:gncA*} is given below. 
\begin{theorem}[counterpart of Theorem~\ref{lemma:GNC-A} for inexact Hessian]
\label{lemma:GNC-A-subsampled}
Suppose Assumption~\ref{ass:iH} holds. 
 The iH-NCG-A algorithm terminates at some iteration $j_*$ with
\begin{align}
	j_*\leq 1+\max\left(\frac{24L_2^2}{\epsilon_2^3},\frac{2L_1}{\epsilon_1^2}\right)\Delta,
\end{align}
satisfying $\|\nabla f(\x_{j_*})\|\leq\epsilon_1$, and with probability $1-\delta$, $\lambda_{\text{min}}\left(\nabla^2f(\x_{j_*})\right)\geq -2\max(\epsilon_2,\epsilon_1)$.
Furthermore, the $j$-th iteration requires time at most 
\[
O\left(T_h \frac{\sqrt{M}}{\max(\epsilon_2, \|\nabla f(\x_j)\|)^{1/2}}\log\left(\frac{d}{\delta'}\right)\right)
\]
\end{theorem}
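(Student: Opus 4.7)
\medskip

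\noindent\textbf{Proof plan for Theorem~\ref{lemma:GNC-A-subsampled}.}
My proposal is to mirror the proof of Theorem~\ref{lemma:GNC-A} structurally, with three technical adjustments to accommodate the use of the inexact Hessian $H(\x_j)$: a perturbed per-step decrease lemma, a perturbed second-order certificate at termination, and a Lanczos runtime with $M$ in place of $L_1$.

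First, I would prove an analogue of Lemma~\ref{lem:NCG} for the iH-NCG step. Let $\mu_j=|\v_j^{\top}H(\x_j)\v_j|$. When the NCD-type branch is taken, the step is $\x_{j+1}=\x_j-(2\mu_j/L_2)\mathrm{sign}(\v_j^{\top}\nabla f(\x_j))\v_j$. Expanding via Lipschitz Hessian and replacing $\v_j^{\top}\nabla^2f(\x_j)\v_j$ by $\v_j^{\top}H(\x_j)\v_j+\epsilon_3$ (using $\|H(\x_j)-\nabla^2 f(\x_j)\|_2\leq\epsilon_3$ from Assumption~\ref{ass:iH}), the usual computation gives
\[
f(\x_{j+1})-f(\x_j)\leq -\frac{2\mu_j^{2}(\mu_j-3\epsilon_3)}{3L_2^2}.
\]
The requirement $\epsilon_3\leq\epsilon_2/12$ is exactly what is needed so that $\mu_j-3\epsilon_3\geq\mu_j/2$ whenever $\mu_j\geq\epsilon_2/2$, delivering decrease at least $\mu_j^{3}/(3L_2^2)\geq \epsilon_2^{3}/(24L_2^2)$. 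The gradient branch is unchanged and yields decrease $\|\nabla f(\x_j)\|^{2}/(2L_1)$. Hence the iH-NCG step decrease is $\geq\max\bigl(\mu_j^{3}/(3L_2^2),\,\|\nabla f(\x_j)\|^{2}/(2L_1)\bigr)$, where the comparison rule inside iH-NCG is taken relative to $\mu_j^{3}/(3L_2^2)$ so that whichever branch is selected, the larger of the two decrease estimates is realized.

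Next, I would perform the three-case termination argument exactly as in the proof of Theorem~\ref{lemma:GNC-A}: as long as the algorithm has not terminated, either $\|\nabla f(\x_j)\|>\epsilon_1$ or $\v_j^{\top}H(\x_j)\v_j\leq-\epsilon_2/2$, so every iteration decreases $f$ by at least $\min\bigl(\epsilon_1^{2}/(2L_1),\,\epsilon_2^{3}/(24L_2^2)\bigr)$. Summing against the potential budget $\Delta$ gives the claimed iteration bound $j_*\leq 1+\max(24L_2^2/\epsilon_2^3,\,2L_1/\epsilon_1^{2})\Delta$. For the second-order certificate at termination, Lanczos applied to $H(\x_{j_*})$ with accuracy $\max(\epsilon_2,\|\nabla f(\x_{j_*})\|)/2$ gives, with probability $\geq 1-\delta'$ per call,
\[
\lambda_{\min}(H(\x_{j_*}))\;\geq\;\v_{j_*}^{\top}H(\x_{j_*})\v_{j_*}-\tfrac{1}{2}\max(\epsilon_2,\|\nabla f(\x_{j_*})\|)\;>\;-\tfrac{\epsilon_2}{2}-\tfrac{1}{2}\max(\epsilon_2,\epsilon_1).
\]
Then Weyl's inequality gives $\lambda_{\min}(\nabla^2 f(\x_{j_*}))\geq\lambda_{\min}(H(\x_{j_*}))-\epsilon_3\geq -\tfrac{\epsilon_2}{2}-\tfrac{1}{2}\max(\epsilon_2,\epsilon_1)-\tfrac{\epsilon_2}{12}$, which is at least $-2\max(\epsilon_2,\epsilon_1)$ in both the $\epsilon_1\leq\epsilon_2$ and $\epsilon_2<\epsilon_1$ regimes by a straightforward arithmetic check. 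A union bound over the $j_*$ Lanczos calls with the choice of $\delta'$ produces the overall $1-\delta$ guarantee.

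Finally, the per-iteration runtime follows from Lemma~\ref{lemma:lanczos} applied to $H(\x_j)$: since $\|H(\x_j)\|_2\leq M$, Lanczos needs $O\bigl(\sqrt{M}\log(d/\delta')/\sqrt{\varepsilon}\bigr)$ Hessian-vector products to reach accuracy $\varepsilon=\max(\epsilon_2,\|\nabla f(\x_j)\|)/2$, giving the stated per-iteration time. The main obstacle I anticipate is the first step, namely pinning down the exact form of the iH-NCG decrease lemma and verifying that the threshold $\epsilon_3\leq\epsilon_2/12$ is tight enough to absorb the Hessian error in the cubic-in-$\mu_j$ expansion without degrading the constants in~\eqref{eqn:bound-GNC-A} by more than the factor of $2$ reflected in $24L_2^2$ (vs.\ $12L_2^2$). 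The rest reduces to bookkeeping along the lines of Theorem~\ref{lemma:GNC-A}.
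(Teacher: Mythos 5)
Your proposal mirrors the high-level structure of the paper's proof, but it analyzes the wrong update rule for the NCD-type branch, and this mismatch is not cosmetic.

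Look again at Algorithm~\ref{alg:gnc*} (iH-NCG): when the curvature branch is taken, the step is $\x^+ = \x - \frac{\epsilon_2}{L_2}\,\mathrm{sign}(\v^{\top}\nabla f(\x))\,\v$, i.e.\ a \emph{fixed} step length $\epsilon_2/L_2$, not the curvature-adaptive length $2\mu_j/L_2$ with $\mu_j = |\v_j^{\top}H(\x_j)\v_j|$ that you use. The branching condition in line~3 is also tied to the fixed-step decrease estimate, namely $-\frac{\epsilon_2^2}{2L_2^2}\v^{\top}H(\x)\v - \frac{5\epsilon_2^3}{24L_2^2} > \frac{\|\nabla f(\x)\|^2}{2L_1}$, which is precisely the lower bound on $f(\x)-f(\x^+)$ one obtains by expanding with the fixed step $\epsilon_2/L_2$ and absorbing the Hessian error $\epsilon_3 \leq \epsilon_2/12$ (this is exactly the content of Lemma~\ref{lem:NCG-iH}). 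Your proof substitutes an adaptive step and an adaptive comparison rule, and you even flag this yourself (``the comparison rule inside iH-NCG is taken relative to $\mu_j^3/(3L_2^2)$''). That is an analysis of a \emph{different} algorithm, so it does not establish Theorem~\ref{lemma:GNC-A-subsampled} as a statement about Algorithm~\ref{alg:gncA*}.

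The switch to a fixed step is not arbitrary. With the adaptive step $2\mu_j/L_2$ one must worry about $\v_j^{\top}H(\x_j)\v_j > 0$: the magnitude $\mu_j$ can be large while the step direction carries positive curvature, in which case the Taylor expansion does not give a decrease, yet the magnitude-based comparison could still select that branch. The paper's fixed-step rule, combined with the signed comparison $-\frac{\epsilon_2^2}{2L_2^2}\v^{\top}H\v - \frac{5\epsilon_2^3}{24L_2^2} > \frac{\|\nabla f\|^2}{2L_1}$, only enters the curvature branch when the estimate is genuinely negative enough to guarantee a decrease, and is thus robust to the additional uncertainty introduced by the inexact Hessian. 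To repair the proof, state and prove Lemma~\ref{lem:NCG-iH} for the actual fixed-step update, then the three-case argument, the union bound, and the Weyl-inequality transfer from $H(\x_{j_*})$ to $\nabla^2 f(\x_{j_*})$ (which you do have correct) all go through as the paper does them. Your second-order certificate arithmetic and the $\sqrt{M}$ Lanczos runtime are fine.
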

{\bf Remark:} We can see that the iteration complexity and worst-case time complexity of iH-NCG-A is the same to that in Theorem~\ref{lemma:GNC-A} up to a constant factor. Based on iH-NCG-A,  it is straightforward (hence details omitted here) to develop a similar variant of NCG-B1 using the inexact Hessian, which would enjoy the same complexity as NCG-B1 provided Assumption~\ref{ass:iH} is satisfied. 
\begin{algorithm}[t]
	\caption{The iH-NCG Step: $(\x^+, \v)=\text{iH-NCG}(\x, \varepsilon, \delta,\epsilon_2)$}\label{alg:gnc*}
	\begin{algorithmic}[1]
		\STATE \textbf{Input}:  $\x$, $\varepsilon$, $\delta$
		\STATE Find a vector $\v$ such that $\|\v\|=1$ and, with probability at least $1-\delta$,
		\[
		\lambda_{\min}(H(\x))\geq \v^{\top}H(\x)\v - \varepsilon
		\]
		using the Lanczos method. 
		\IF{$-\frac{\epsilon_2^2}{2L_2^2}\v^\top H(\x)\v-\frac{5\epsilon_2^3}{24L_2^2}>\frac{\|\nabla f(\x)\|^2}{2L_1}$}
		\STATE Compute $\x^+  = \x - \frac{\epsilon_2}{L_2}\text{sign}(\v^{\top}\nabla f(\x))\v$
		\ELSE
		\STATE Compute $\x^+ = \x - \frac{1}{L_1}\nabla f(\x)$
		\ENDIF
		\STATE return $\x^+, \v$
	\end{algorithmic}
\end{algorithm}
\begin{algorithm}[t]
	\caption{iH-NCG-A: $(\x_0, \epsilon_1, \epsilon_2, \delta)$}\label{alg:gncA*}
	\begin{algorithmic}[1]
		\STATE \textbf{Input}:  $\x_0$, $\epsilon_1, \epsilon_2$, $\delta$
		\STATE $\x_1=\x_0$, $\delta' = \delta /(1+\max\left(\frac{24L_2^2}{\epsilon_2^3}, \frac{2L_1}{\epsilon_1^2}\right)\Delta)$
		\FOR{$j=1,2,\ldots,$}
		\STATE  $(\x_{j+1}, \v_j) = \text{iH-NCG}(\x_j,  \max(\epsilon_2, \|\nabla f(\x_j)\|)/2,  \delta', \epsilon_2)$
		
		\IF{ $\v_j^{\top}H(\x_j)\v_j> -\epsilon_2/2$ and $\|\nabla f(\x_j)\|\leq \epsilon_1$}
		\RETURN $\x_j$
		\ENDIF
		\ENDFOR
	\end{algorithmic}
\end{algorithm}
\vskip2ex
To prove the above theorem, similar to Lemma~\ref{lem:NCG}, we first present a result of objective decrease for each iH-NCG step. 
\begin{lemma}\label{lem:NCG-iH}
Suppose Assumption~\ref{ass:iH} holds. For any $\varepsilon, \delta\in(0, 1)$, the update $$\x_{j+1}=\text{iH-NCG}(\x_j, \varepsilon,  \delta, \epsilon_2)$$ yields 
\begin{align*}
f(\x_j) - f(\x_{j+1})\geq \max\left\{\left(\frac{-\epsilon_2^2\v_j^\top H(\x_j)\v_j}{2L_2^2} - \frac{5\epsilon_2^3}{24L_2^2}\right), \frac{\|\nabla f(\x_j)\|^2}{2L_1}\right\}
\end{align*}
If $\v_j^\top H(\x_j)\v_j\leq -\epsilon_2/2$, we have 
\[
f(\x_j) - f(\x_{j+1})\geq\max\left(\frac{\|\nabla f(\x_j)\|^2}{2L_1}, \frac{\epsilon_2^3}{24L_2^2}\right)\]
\end{lemma}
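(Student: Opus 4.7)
The plan is to mirror the proof of Lemma~\ref{lem:NCG}, but track the error introduced by replacing $\nabla^2 f(\x)$ with $H(\x)$ via Assumption~\ref{ass:iH}. The key observation is that the two candidate directions in Algorithm~\ref{alg:gnc*} are picked greedily by estimated decrease, so the actual decrease $f(\x_j)-f(\x_{j+1})$ automatically dominates the \emph{maximum} of the two lower bounds, provided each candidate's true decrease is at least its estimate.

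First I would handle the gradient branch. When $\x_{j+1}=\x_j-\nabla f(\x_j)/L_1$, the standard $L_1$-smoothness bound yields $f(\x_j)-f(\x_{j+1})\ge \|\nabla f(\x_j)\|^2/(2L_1)$, exactly the second entry of the outer max.

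Next I would handle the negative-curvature branch, which is where Assumption~\ref{ass:iH} enters. With $\x_{j+1}-\x_j=-(\epsilon_2/L_2)\,\mathrm{sign}(\v_j^\top\nabla f(\x_j))\,\v_j$, the third-order Taylor expansion under $L_2$-Lipschitz Hessian gives
\begin{align*}
f(\x_{j+1})-f(\x_j)
&\le \nabla f(\x_j)^\top(\x_{j+1}-\x_j)+\tfrac12(\x_{j+1}-\x_j)^\top\nabla^2 f(\x_j)(\x_{j+1}-\x_j)+\tfrac{L_2}{6}\|\x_{j+1}-\x_j\|^3\\
&\le \tfrac{\epsilon_2^2}{2L_2^2}\,\v_j^\top\nabla^2 f(\x_j)\v_j+\tfrac{\epsilon_2^3}{6L_2^2},
\end{align*}
where the first-order term is nonpositive because of the sign choice, and $\|\x_{j+1}-\x_j\|=\epsilon_2/L_2$. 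I would then replace $\v_j^\top\nabla^2 f(\x_j)\v_j$ by $\v_j^\top H(\x_j)\v_j+\epsilon_3$ using $\|H(\x_j)-\nabla^2 f(\x_j)\|_2\le\epsilon_3\le\epsilon_2/12$, which produces an additive slack of $\epsilon_2^2\epsilon_3/(2L_2^2)\le\epsilon_2^3/(24L_2^2)$. Combining with $\epsilon_2^3/(6L_2^2)=4\epsilon_2^3/(24L_2^2)$ bundles the two error terms into the single $5\epsilon_2^3/(24L_2^2)$ offset appearing in the lemma.

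Having both single-branch bounds in hand, I would combine them via the greedy selection in Step~3 of Algorithm~\ref{alg:gnc*}: the test picks whichever candidate's estimated decrease is larger, so the realized decrease is at least the larger of $-\tfrac{\epsilon_2^2}{2L_2^2}\v_j^\top H(\x_j)\v_j-\tfrac{5\epsilon_2^3}{24L_2^2}$ and $\|\nabla f(\x_j)\|^2/(2L_1)$, yielding the first displayed inequality. For the second displayed inequality, I would plug the hypothesis $\v_j^\top H(\x_j)\v_j\le -\epsilon_2/2$ into the NCG bound to get $-\tfrac{\epsilon_2^2}{2L_2^2}\v_j^\top H(\x_j)\v_j-\tfrac{5\epsilon_2^3}{24L_2^2}\ge \tfrac{\epsilon_2^3}{4L_2^2}-\tfrac{5\epsilon_2^3}{24L_2^2}=\tfrac{\epsilon_2^3}{24L_2^2}$, then take the max with the gradient bound.

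The only mildly subtle point is the arithmetic bookkeeping that the $L_2$-Lipschitz third-order error plus the Hessian approximation error collapse exactly to the $5\epsilon_2^3/(24L_2^2)$ constant; this crucially uses $\epsilon_3\le\epsilon_2/12$ from Assumption~\ref{ass:iH}, which is why that particular constant is chosen. Everything else is routine Taylor estimation plus the comparison step of the algorithm.
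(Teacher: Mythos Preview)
Your proposal is correct and follows essentially the same approach as the paper's proof: both analyze the two candidate updates separately via the $L_1$-smoothness descent lemma and the third-order Taylor bound (with the $\epsilon_3\le\epsilon_2/12$ correction for the inexact Hessian), then invoke the greedy comparison in Algorithm~\ref{alg:gnc*} to conclude that the realized decrease dominates the max of the two estimates. The arithmetic you carry out (collapsing $\epsilon_2^3/6L_2^2 + \epsilon_2^2\epsilon_3/2L_2^2$ into $5\epsilon_2^3/24L_2^2$, and the reduction to $\epsilon_2^3/24L_2^2$ under $\v_j^\top H(\x_j)\v_j\le-\epsilon_2/2$) matches the paper exactly.
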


\begin{proof}[Proof of Theorem~\ref{lemma:GNC-A-subsampled}]
	 Following the similar analysis in the proof of Theorem~\ref{lemma:GNC-A}, for all iterations $j\leq j_*-1$ we consider three cases: Case 1: $\|\nabla f(\x_j)\|> \epsilon_1$ and $\v_j^{\top}H(\x_j)\v_j\leq  -\epsilon_2/2$, Case 2:  $\|\nabla f(\x_j)\|\leq\epsilon_1$ and $\v_j^{\top} H(\x_j)\v_j\leq  -\epsilon_2/2$ and Case 3:  $\|\nabla f(\x_j)\|> \epsilon_1$ and $\v_j^{\top}H(\x_j)\v_j>  - \epsilon_2/2$. In either case, following Lemma~\ref{lem:NCG-iH}, we have 
\begin{align}
\label{ineqn:sufficientdecrease}
\min\left(\frac{\epsilon_1^2}{2L_1}, \frac{\epsilon_2^3}{24L_2^2}\right)\leq f(\x_j) - f(\x_{j+1}),
\end{align}
from which we can prove the bound on $j_*$ as in the theorem. 


Next, we establish the convergence guarantee of $\x_{j_*}$. Upon termination, we have $\|\nabla f(\x_{j_*})\|\leq \epsilon_1$ and $\v_{j_*}^{\top}H(\x_{j_*})\v_{j_*}\geq -\epsilon_2/2$. Then with probability at least $1-j_*\delta'$ we have
\begin{align}
\lambda_{\min}(H(\x_{j_*}))\geq \v_{j_*}^{\top}H(\x_{j_*})\v_{j_*} - \max(\epsilon_2, \|\nabla f(\x_{j_*})\|)/2 \geq  -\max(\epsilon_1, \epsilon_2)
\end{align}
Let $D=\max(M, L_1)$, then we have $\|DI - H(\x_{j_*})\|_2 = D - \lambda_{\min}(H(\x_{j_*}))$ and $\|DI - \nabla^2 f(\x_{j_*})\|_2 = D -  \lambda_{\min}(\nabla^2 f(\x_{j_*}))$.
By the regularity condition on the inexact Hessian, we have
\begin{align}
    \|DI -\nabla^2 f(\x_{j_*}) \|_2\leq \|DI - H(\x_{j_*}) \|_2 + \|\nabla^2 f(\x_{j_*}) - H(\x_{j_*})\|_2\leq D - \lambda_{\min}(H(\x_{j_*})) + \epsilon_3
\end{align}
As a result $\lambda_{\min}(\nabla^2 f(\x_{j_*})) \geq \lambda_{\min}(H(\x_{j_*})) - \epsilon_3\geq -\max(\epsilon_1, \epsilon_2) - \epsilon_3$. 
The running time spent on the $j$-th iteration follows from Lemma~\ref{lemma:lanczos}. 
\end{proof}

\subsection{A Stochastic Variant of NCG-A}
In this subsection, we propose a stochastic variant of NCG-A for solving a finite or infinite sum problem: 
\begin{align}\label{eqn:stocn1}
	\min_{\x\in\R^d}f(\x)=\frac{1}{n}\sum_{i=1}^{n}f_i(\x),
\end{align}
where $n$ could be very large and even infinity and $f_i(\x)$ satisfies the first three bullets in Assumption \ref{ass:1} but not necessarily convex. When $n$ is infinity, the problem becomes an instance of stochastic optimization:
\begin{align}\label{eqn:stocn}
	\min_{\x\in\R^d}f(\x)=\E_{\xi}[f(\x; \xi)],
\end{align}
where $f(\x; \xi)$ is a random function satisfying  the first three bullets in Assumption \ref{ass:1} but not necessarily convex. For a stochastic non-convex optimization problem~(\ref{eqn:stocn}), we abuse the notation  $f_i(\x)=f(\x; \xi_i)$ for a random variable $\xi_i$. 
To develop the stochastic variant, we consider to use sub-samples to compute a stochastic gradient and a stochastic Hessian. Denote by $\mathcal S_1$ and $\mathcal S_2$ two random sets with elements sampled uniformly at random from $\{1, \ldots, n\}$ for~\eqref{eqn:stocn1} (or sampled from the involved distribution for~\eqref{eqn:stocn}).  Then we compute a stochastic (sub-sampled) gradient $\g(\x)$ and a stochastic (sub-sampled) Hessian $H(\x)$ as follows: 
\begin{align*}
\mathbf g(\x)  = \frac{1}{|\mathcal S_1|}\sum_{i\in\mathcal S_1}\nabla f_i(\x),\quad H(\x)  = \frac{1}{|\mathcal S_2|}\sum_{i\in\mathcal S_2} \nabla^2 f_i(\x)
\end{align*}
To derive the complexity result, we make the following assumption: 
\begin{ass}\label{ass:stoc}
    For any $\delta'\in(0,1)$  and $\x\in\R^d$, assume that there exist $\epsilon_3\leq \epsilon_2/24$ and $\epsilon_4\leq \min(\frac{1}{2\sqrt{2}}\epsilon_1, \epsilon_2^2/(24L_2))$ such that  with probability $1-\delta'$, 
    \begin{align*}
        \|H(\x)-\nabla^2 f(\x)\|_2\leq\epsilon_3, \quad \|\g(\x) - \nabla f(\x)\|\leq \epsilon_4
    \end{align*}
\end{ass}
\begin{algorithm}[t]
	\caption{The stochastic NCG Step: $(\x^+, \v)=\text{NCG-S}(\x, \varepsilon, \delta,\epsilon_2)$}\label{alg:sgnc}
	\begin{algorithmic}[1]
		\STATE \textbf{Input}:  $\x$, $\varepsilon$, $\delta$
		\STATE Compute a sub-sampled gradient $\g(\x)$ and a sub-sampled Hessian $H(\x)$
		\STATE Find a vector $\v$ such that $\|\v\|=1$ and, with probability at least $1-\delta$,
		\[
		\lambda_{\min}(H(\x))\geq \v^{\top}H(\x)\v - \varepsilon
		\]
		using a randomized algorithm $\mathcal A$ as in Lemma~\ref{lem:approxPCA}. 
		\IF{$-\frac{\epsilon_2^2}{2L_2^2}\v^\top H(\x)\v-\frac{11\epsilon_2^3}{48L_2^2}>\frac{\|\g(\x)\|^2}{4L_1} -  \frac{\epsilon_1^2}{8L_1}$}
		\STATE Compute $\x^+  = \x - \frac{\epsilon_2}{L_2}\text{sign}(\v^{\top}\g(\x))\v$
		\ELSE
		\STATE Compute $\x^+ = \x - \frac{1}{L_1}\g(\x)$
		\ENDIF
		\STATE return $\x^+, \v$
	\end{algorithmic}
\end{algorithm}
\begin{algorithm}[t]
	\caption{SNCG: $(\x_0, \epsilon_1, \alpha, \delta)$}\label{alg:sgncA}
	\begin{algorithmic}[1]
		\STATE \textbf{Input}:  $\x_0$, $\epsilon_1, \alpha$, $\delta$
		\STATE $\x_1=\x_0$, $\epsilon_2 = \epsilon_1^\alpha$, $\delta' = \delta /(1+\max\left(\frac{48L_2^2}{\epsilon_2^3}, \frac{8L_1}{\epsilon_1^2}\right)\Delta)$
		\FOR{$j=1,2,\ldots,$}
		\STATE let $\g(\x_j)$ be a sub-sampled gradient
		\STATE  $(\x_{j+1}, \v_j) = \text{NCG-S}(\x_j,  \max(\epsilon_2, \|\g(\x_j)\|^\alpha)/2,  \delta', \epsilon_2)$
		
		\IF{ $\v_j^{\top}H(\x_j)\v_j> -\epsilon_2/2$ and $\|\g(\x_j)\|\leq \epsilon_1$}
		\RETURN $\x_j$
		\ENDIF
		\ENDFOR
	\end{algorithmic}
\end{algorithm}
To better leverage the finite-sum structure of $H(\x)$, we will use an approximate PCA algorithm for computing its minimum eigen-value. To this end, we first present a result  for computing the minimum eigen-value of $H(\x)$. 
\begin{lemma}\label{lem:approxPCA}
Let $H = \frac{1}{m}\sum_{i=1}^mH_i$ where $\|H_i\|_2\leq L_1$. There exists a randomized algorithm $\mathcal A$ such that with probability at least $1- \delta$, $\mathcal A$ produces a unit vector $\v$ satisfying  $\lambda_{\min}(H)\geq \v^{\top}H\v - \varepsilon$ with a time complexity of $\widetilde O(T_h^1\max\{m, m^{3/4}\sqrt{L_1/\varepsilon}\})$, where $T_h^1$ denotes the time of computing $H_i\v$ and $\widetilde O$ suppresses a logarithmic term in $\delta, d, 1/\varepsilon$. 
\end{lemma}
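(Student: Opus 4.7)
The plan is to reduce the smallest-eigenvalue computation of $H$ to a top-eigenvector computation on a finite-sum PSD matrix and then invoke an off-the-shelf fast stochastic PCA routine. First I would form the shifted matrix
\[
B = L_1 I - H = \frac{1}{m}\sum_{i=1}^m (L_1 I - H_i),
\]
which is PSD (since $\|H_i\|_2\le L_1$ implies $L_1 I - H_i \succeq 0$), has the same finite-sum structure as $H$, and satisfies $\|B\|_2\le 2L_1$. Moreover, a unit vector $\v$ with $\v^\top B\v\ge \lambda_{\max}(B)-\varepsilon$ is exactly a unit vector with $\v^\top H\v \le \lambda_{\min}(H)+\varepsilon$, which is the guarantee we want. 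Hence it suffices to produce an $\varepsilon$-approximate top eigenvector of $B$, and a Hessian–vector product with any $H_i$ costs $O(T_h^1)$ time, hence a matrix–vector product with $B$ in the finite-sum sense costs $O(T_h^1)$ per component.

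Next I would invoke a finite-sum PCA algorithm of the type analyzed by \cite{DBLP:conf/nips/ZhuL16} or \cite{DBLP:conf/icml/GarberHJKMNS16}. Those algorithms (shift-and-invert preconditioning combined with an SVRG-style solver for the inner ridge-regression subproblem, or LazySVD) are known to produce, with probability at least $1-\delta$, a unit vector $\v$ attaining $\v^\top B\v \ge \lambda_{\max}(B)-\varepsilon$ using total time
\[
\widetilde O\!\left(T_h^1\cdot\max\!\left\{m,\ m^{3/4}\sqrt{\|B\|_2/\varepsilon}\right\}\right),
\]
where $\widetilde O$ hides polylogarithmic factors in $m, d, 1/\varepsilon, 1/\delta$. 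Substituting $\|B\|_2 \le 2L_1$ gives exactly the stated complexity $\widetilde O(T_h^1\max\{m, m^{3/4}\sqrt{L_1/\varepsilon}\})$. The confidence $1-\delta$ is obtained either by the native high-probability guarantee of the algorithm or by standard repetition/boosting.

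The only nontrivial ingredient is citing the correct gap-free bound: the two works above state guarantees of the form ``given any $\varepsilon>0$, an $\varepsilon$-approximate top eigenvector can be found in $\widetilde O(m + m^{3/4}\sqrt{L_1/\varepsilon})$ stochastic Hessian–vector products,'' without requiring a spectral gap. The main obstacle is therefore bookkeeping: verifying that the gap-free variant yields the additive approximation $\v^\top B\v \ge \lambda_{\max}(B)-\varepsilon$ rather than a multiplicative $(1-\varepsilon)$ approximation, and ensuring that the failure probability can be driven to $\delta$ with only a logarithmic overhead (which both papers explicitly establish). Once that citation is in place, translating back via $\lambda_{\min}(H) = L_1 - \lambda_{\max}(B)$ and $\v^\top H\v = L_1 - \v^\top B\v$ yields the lemma.
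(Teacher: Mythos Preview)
Your proposal is correct and follows essentially the same approach as the paper: shift to a PSD matrix, apply a fast finite-sum approximate PCA routine, and translate back via $\lambda_{\min}(H)=L_1-\lambda_{\max}(B)$. The only cosmetic difference is that the paper invokes Theorem~2.5 of \cite{DBLP:conf/stoc/AgarwalZBHM17} (AppxPCA), which states a multiplicative guarantee $\v^\top M\v\ge(1-\delta_+)(1-\epsilon)\lambda_{\max}(M)$, and then explicitly carries out the multiplicative-to-additive conversion you flagged as the ``main obstacle'' by setting $\epsilon=\delta_+=\varepsilon/(4L_1)$; you instead cite the gap-free additive formulations in \cite{DBLP:conf/nips/ZhuL16,DBLP:conf/icml/GarberHJKMNS16} directly.
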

{\bf Remark:} The randomized algorithms proposed in~\citep{DBLP:conf/nips/ZhuL16,DBLP:conf/icml/GarberHJKMNS16} can serve this purpose.  

Next, we present a stochastic variant of NCG-A2 in Algorithm~\ref{alg:sgncA} assuming that $\epsilon_2 = \epsilon_1^{\alpha}$ for $\alpha\in(0,1]$ and the Assumption~\ref{ass:stoc} holds, which employs the stochastic NCG step presented in Algorithm~\ref{alg:sgnc}.  Denote by $T^1_g$ the time complexity for computing one stochastic gradient $\nabla f_i(\x)$ and by $T^1_h$ the time complexity for computing one $\nabla^2 f_i(\x)\v$ Hessian-vector product. The convergence guarantee of SNCG for finding an $(\epsilon_1, \epsilon_2)$-second-order stationary point is presented below. 
\begin{theorem}
\label{thm:GNC-A-stoc}
Suppose Assumption~\ref{ass:stoc} holds and $\epsilon_2 = \epsilon_1^\alpha$ for $\alpha\in(0,1]$.
With probability $1-\delta$, the SNCG algorithm terminates at some iteration $j_*$ with
\begin{align}
	j_*\leq 1+\max\left(\frac{48L_2^2}{\epsilon_2^3},\frac{8L_1}{\epsilon_1^2}\right)\Delta,
\end{align}
and upon termination it holds that $\|\nabla f(\x_{j_*})\|\leq2\epsilon_1 $ and $\lambda_{\text{min}}\left(\nabla^2f(\x_{j_*})\right)\geq -2\epsilon_2$ with probability $1-3\delta$.
Furthermore, the $j$-th iteration requires time at most 
\[
\widetilde O\left(T_h^1|\mathcal S_2| + T_h^1|\mathcal S_2|^{3/4} \frac{\sqrt{L_1}}{\max(\epsilon_2, \|\g(\x_j)\|^\alpha)^{1/2}} + |\mathcal S_1|T_g^1\right)
\]
\end{theorem}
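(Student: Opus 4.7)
The plan is to mirror the deterministic argument for Theorem~\ref{lemma:GNC-A-subsampled}, replacing the exact gradient and Hessian by their sub-sampled surrogates $\g(\x_j), H(\x_j)$ and paying for the sampling errors via Assumption~\ref{ass:stoc}. The skeleton has four ingredients: (i) a per-step descent lemma tailored to the NCG-S update, (ii) a bookkeeping argument that converts the non-termination alternatives into a guaranteed decrease, (iii) translation of the $\g,H$-based termination predicate into guarantees on $\nabla f,\nabla^2 f$, and (iv) a union bound plus a time-accounting step using Lemma~\ref{lem:approxPCA}.

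For (i), I would prove the stochastic analogue of Lemma~\ref{lem:NCG-iH}: on the event that both $\|H(\x_j)-\nabla^2 f(\x_j)\|_2\le\epsilon_3$ and $\|\g(\x_j)-\nabla f(\x_j)\|\le\epsilon_4$ hold,
\[
f(\x_j)-f(\x_{j+1})\ge\max\!\left\{-\tfrac{\epsilon_2^2}{2L_2^2}\v_j^\top H(\x_j)\v_j-\tfrac{11\epsilon_2^3}{48L_2^2},\ \tfrac{\|\g(\x_j)\|^2}{4L_1}-\tfrac{\epsilon_1^2}{8L_1}\right\}.
\]
For the gradient branch, smoothness gives $f(\x^+)\le f(\x)-\tfrac{1}{L_1}\nabla f(\x)^\top\g(\x)+\tfrac{1}{2L_1}\|\g\|^2$, and Young's inequality $(\nabla f-\g)^\top\g\ge -\|\nabla f-\g\|^2-\tfrac14\|\g\|^2$ combined with $\epsilon_4\le\epsilon_1/(2\sqrt{2})$ produces the stated $\|\g\|^2/(4L_1)-\epsilon_1^2/(8L_1)$ bound. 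For the curvature branch, the Lipschitz-Hessian Taylor bound with step $\d=-\tfrac{\epsilon_2}{L_2}\mathrm{sign}(\v^\top\g)\v$ yields three error contributions: the linear term, bounded by $\tfrac{\epsilon_2\epsilon_4}{L_2}\le\epsilon_2^3/(24L_2^2)$; the quadratic Hessian-error term, bounded by $\epsilon_2^2\epsilon_3/(2L_2^2)\le\epsilon_2^3/(48L_2^2)$; and the cubic remainder $\epsilon_2^3/(6L_2^2)$. Summing gives precisely the $11/48$ coefficient, and one can check the algorithm's threshold in Algorithm~\ref{alg:sgnc} is calibrated to take whichever direction enjoys the larger of these two decrease certificates.

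For (ii) and (iii), any non-terminating iterate must have either $\|\g(\x_j)\|>\epsilon_1$, in which case $\|\g\|^2/(4L_1)-\epsilon_1^2/(8L_1)\ge\epsilon_1^2/(8L_1)$, or $\v_j^\top H(\x_j)\v_j\le-\epsilon_2/2$, in which case $-\tfrac{\epsilon_2^2}{2L_2^2}\v_j^\top H(\x_j)\v_j-\tfrac{11\epsilon_2^3}{48L_2^2}\ge\tfrac{\epsilon_2^3}{48L_2^2}$. Thus every iteration decreases $f$ by at least $\min\{\epsilon_1^2/(8L_1),\epsilon_2^3/(48L_2^2)\}$, giving the stated iteration bound on $j_*$ via $f(\x_1)-f_*\le\Delta$. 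At termination, $\|\g(\x_{j_*})\|\le\epsilon_1$ and Assumption~\ref{ass:stoc} give $\|\nabla f(\x_{j_*})\|\le\epsilon_1+\epsilon_4\le 2\epsilon_1$; similarly $\|\g(\x_{j_*})\|^\alpha\le\epsilon_1^\alpha=\epsilon_2$, so the Lanczos/PCA noise at the final step is at most $\epsilon_2/2$, which combined with $\v_{j_*}^\top H(\x_{j_*})\v_{j_*}>-\epsilon_2/2$ yields $\lambda_{\min}(H(\x_{j_*}))\ge-\epsilon_2$; then $\lambda_{\min}(\nabla^2 f(\x_{j_*}))\ge\lambda_{\min}(H(\x_{j_*}))-\epsilon_3\ge -\epsilon_2-\epsilon_2/24\ge -2\epsilon_2$.

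For (iv), each iteration has three probabilistic events that must succeed (accurate $\g$, accurate $H$, successful PCA), each with failure $\le\delta'$; a union bound over $j_*\le 1+\max(48L_2^2/\epsilon_2^3,8L_1/\epsilon_1^2)\Delta$ iterations gives overall failure at most $3j_*\delta'\le 3\delta$ by the choice of $\delta'$. The per-iteration runtime follows by adding $|\mathcal S_1|T_g^1$ for the sub-sampled gradient and invoking Lemma~\ref{lem:approxPCA} with matrix size $m=|\mathcal S_2|$ and accuracy $\varepsilon=\max(\epsilon_2,\|\g(\x_j)\|^\alpha)/2$. The main obstacle is really in (i): the algorithm's unusual thresholds ($-\tfrac{11\epsilon_2^3}{48L_2^2}$ and $-\tfrac{\epsilon_1^2}{8L_1}$) are not arbitrary but are forced by the simultaneous need to (a) absorb the gradient-sampling cross-term via Young's inequality, (b) absorb the Hessian-sampling and cubic Taylor remainder into the curvature certificate, and (c) still retain an $\Omega(\epsilon_2^3/L_2^2)$ residual decrease when $\v^\top H\v\le-\epsilon_2/2$; verifying that the constants $\epsilon_3\le\epsilon_2/24$ and $\epsilon_4\le\min(\epsilon_1/(2\sqrt{2}),\epsilon_2^2/(24L_2))$ from Assumption~\ref{ass:stoc} are exactly what is needed for each of these steps is the only delicate piece of arithmetic.
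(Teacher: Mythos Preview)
Your proposal is correct and follows essentially the same approach as the paper's proof: the same descent lemma for the two branches (with the same use of Young's inequality on the gradient branch and the same $1/24+1/48+1/6=11/48$ accounting on the curvature branch), the same case split for non-terminating iterates yielding decreases of $\epsilon_1^2/(8L_1)$ and $\epsilon_2^3/(48L_2^2)$, the same translation from $\g,H$ to $\nabla f,\nabla^2 f$ at termination, and the same union bound and appeal to Lemma~\ref{lem:approxPCA} for the runtime. If anything, your probability bookkeeping (explicitly naming the three per-iteration events) is slightly more careful than the paper's.
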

To fully quantify the time complexity of SNCG, we need to unveil the order of $|\mathcal S_1|$ and $|\mathcal S_2|$ to ensure Assumption~\ref{ass:stoc} holds. To this end, we explore the concentration inequalities for sub-sampled Hessian and sub-sampled gradient. 
\begin{lemma}
	\label{lem:Hc}
	For any $\epsilon_3,\delta'\in(0,1)$, $\x\in\R^d$, when $|\mathcal{S}_2|\geq\frac{16L_1^2}{\epsilon_3^2}\log(\frac{2d}{\delta'})$, we have $\|H(\x)\|_2\leq L_1$ and
	\begin{align*}
		\Pr(\|H(\x)-\nabla^2 f(\x)\|_2\leq\epsilon_3)\geq 1-\delta'.
	\end{align*}
\end{lemma}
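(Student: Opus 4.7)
The plan is to prove the two claims separately. The deterministic bound $\|H(\x)\|_2 \le L_1$ is essentially free: since each $f_i$ has $L_1$-Lipschitz gradient, we have $\|\nabla^2 f_i(\x)\|_2 \le L_1$ pointwise, and by the triangle inequality for the spectral norm applied to the average, $\|H(\x)\|_2 = \|\tfrac{1}{|\mathcal{S}_2|}\sum_{i\in\mathcal{S}_2}\nabla^2 f_i(\x)\|_2 \le \tfrac{1}{|\mathcal{S}_2|}\sum_{i\in\mathcal{S}_2}\|\nabla^2 f_i(\x)\|_2 \le L_1$. The same argument shows $\|\nabla^2 f(\x)\|_2 \le L_1$, which I will use in the next step.

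The probabilistic bound is a standard application of a matrix concentration inequality, specifically the matrix Hoeffding (or Bernstein) inequality. Define the centered random matrices $Y_i = \nabla^2 f_i(\x) - \nabla^2 f(\x)$ for $i \in \mathcal{S}_2$. These are i.i.d., symmetric (since Hessians are symmetric), mean-zero, and uniformly bounded in spectral norm by $\|Y_i\|_2 \le \|\nabla^2 f_i(\x)\|_2 + \|\nabla^2 f(\x)\|_2 \le 2 L_1$. Writing $H(\x) - \nabla^2 f(\x) = \tfrac{1}{|\mathcal{S}_2|}\sum_{i\in\mathcal{S}_2} Y_i$, the matrix Hoeffding inequality of Tropp yields
\begin{align*}
\Pr\Bigl(\bigl\|H(\x) - \nabla^2 f(\x)\bigr\|_2 \ge \epsilon_3\Bigr) \;\le\; 2d\,\exp\!\left(-\frac{|\mathcal{S}_2|\,\epsilon_3^2}{8\,L_1^2}\right).
\end{align*}
Setting the right-hand side to $\delta'$ and solving for $|\mathcal{S}_2|$ gives $|\mathcal{S}_2| \ge \tfrac{8L_1^2}{\epsilon_3^2}\log(2d/\delta')$; the stated threshold $\tfrac{16L_1^2}{\epsilon_3^2}\log(2d/\delta')$ follows by absorbing the $2L_1$ uniform bound on $\|Y_i\|_2$ (rather than $L_1$) into the Hoeffding constant, which produces the factor of $16$ instead of $8$.

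There is no serious obstacle here, since the lemma is a direct invocation of matrix Hoeffding; the only point that requires a sentence of justification is the boundedness of each $Y_i$, which uses both the pointwise bound on $\|\nabla^2 f_i(\x)\|_2$ and the pointwise bound on $\|\nabla^2 f(\x)\|_2$ inherited from the $L_1$-Lipschitz continuity of $\nabla f_i$. The only mild subtlety is to apply the matrix concentration result in its symmetric form (rather than the rectangular version), which is appropriate since Hessians live in the space of symmetric $d \times d$ matrices; this is why the dimension factor in the tail bound is $2d$ (accounting for two-sided deviations of the extreme eigenvalues) rather than $d$.
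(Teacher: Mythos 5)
Your approach matches the paper's: the paper does not prove Lemma~\ref{lem:Hc} itself but simply points to \cite{peng16inexacthessian} for a matrix-concentration argument, and your invocation of a Tropp-style matrix Hoeffding/Bernstein bound is exactly that. Both your halves are correct in structure---the deterministic bound $\|H(\x)\|_2\leq L_1$ via the triangle inequality on the averaged sub-sampled Hessians, and the probabilistic bound via centering $Y_i=\nabla^2 f_i(\x)-\nabla^2 f(\x)$ and applying the self-adjoint form of the inequality with dimension factor $2d$.

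The one slip is in the constant tracking. The version of matrix Hoeffding you quote, a tail of the form $2d\exp\bigl(-m\epsilon_3^2/(8B^2)\bigr)$ with $\|Y_i\|_2\leq B$, gives $32L_1^2$ after substituting $B=2L_1$, not the $16L_1^2$ you claim ($8\cdot(2L_1)^2=32L_1^2$). To land on the stated $16L_1^2$ one should use a marginally sharper form---for instance matrix Bernstein with the variance bound $\mathbb{E}[Y_i^2]\preceq\mathbb{E}[(\nabla^2 f_i(\x))^2]\preceq L_1^2 I$, which is tighter than the crude $\|Y_i\|_2^2\leq 4L_1^2$ bound and easily delivers a constant of $16$ or better once the lower-order term is absorbed. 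This is a cosmetic adjustment, not a gap in the argument; the lemma goes through as you describe.
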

The above lemma can be proved by using matrix concentration inequalities. Please see \citep{peng16inexacthessian}[Lemma 4] for a proof.  

\begin{lemma}
	\label{lem:gc}
	Assume that $\E[\exp(\|\nabla f_i(\x) - \nabla f(\x)\|/G)]\leq \exp(1)$ holds for any $\x\in\R^d$. For any $\epsilon_4,\delta\in(0,1)$, $\x\in\R^d$, when 
	$|\mathcal{S}_1|\geq\frac{4G^2(1+3\log^2(1/\delta')}{\epsilon_4^2})$,
	we have 
	\begin{align*}
		\Pr(\|\g(\x)-\nabla  f(\x)\|\leq\epsilon_4)\geq 1-\delta'.
	\end{align*}
\end{lemma}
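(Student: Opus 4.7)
The goal is a dimension-free concentration bound for the sub-sampled gradient $\g(\x)-\nabla f(\x)=\frac{1}{|\Se_1|}\sum_{i\in\Se_1}(\nabla f_i(\x)-\nabla f(\x))$ under a norm-sub-exponential assumption on each summand. My plan is to reduce this to a standard vector concentration inequality for sums of independent mean-zero random vectors whose norms have sub-exponential tails.

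First I would set $Y_i = \nabla f_i(\x)-\nabla f(\x)$ for $i\in\Se_1$, which are i.i.d.\ with $\E[Y_i]=0$ because $\nabla f(\x)=\E_i[\nabla f_i(\x)]$. The hypothesis $\E[\exp(\|Y_i\|/G)]\le e$ is exactly the condition that $\|Y_i\|$ is $G$-sub-exponential. By Markov's inequality it immediately yields tail control $\Pr(\|Y_i\|\ge tG)\le e^{1-t}$, and it also implies moment bounds $\E\|Y_i\|^p\le p!\,G^p$ for all $p\ge 1$, which is the input needed by a vector Bernstein bound.

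Next I would invoke a vector Bernstein-type concentration inequality of the Pinelis/Juditsky--Nemirovski flavor (the same one commonly quoted in e.g.\ Kohler--Lucchi and Xu--Yang): for i.i.d.\ mean-zero vectors $Y_i$ in a Hilbert space with $\E\exp(\|Y_i\|/G)\le e$, with probability at least $1-\delta'$,
\begin{align*}
\left\|\frac{1}{m}\sum_{i=1}^m Y_i\right\|\;\le\;\frac{2G\bigl(1+\sqrt{3}\log(1/\delta')\bigr)}{\sqrt{m}}.
\end{align*}
Applying this with $m=|\Se_1|$ and requiring the right-hand side to be at most $\epsilon_4$ gives the sufficient condition
\begin{align*}
|\Se_1|\;\ge\;\frac{4G^2\bigl(1+\sqrt{3}\log(1/\delta')\bigr)^2}{\epsilon_4^2}\;\ge\;\frac{4G^2\bigl(1+3\log^2(1/\delta')\bigr)}{\epsilon_4^2},
\end{align*}
which is exactly the sample-size bound stated in the lemma. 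Rearranging yields $\Pr(\|\g(\x)-\nabla f(\x)\|\le \epsilon_4)\ge 1-\delta'$.

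The only real obstacle is locating (or re-deriving) the precise vector concentration inequality with the constants that match the $(1+3\log^2(1/\delta'))$ form in the statement. A clean self-contained derivation would go through Chebyshev for the $\mathcal{O}(1/\sqrt m)$ bulk term (using $\E\|Y_i\|^2\le 2G^2$ from the sub-exponential hypothesis) combined with a truncation argument: split $Y_i = Y_i\mathbf{1}_{\|Y_i\|\le \tau} + Y_i\mathbf{1}_{\|Y_i\|> \tau}$ with $\tau = G\log(1/\delta')$, control the bounded part by Hoeffding/Bernstein in Hilbert space, and control the tail part by a union bound using $\Pr(\|Y_i\|>\tau)\le e^{1-\tau/G}=\delta'/e^{\log(1/\delta')-1}$. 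Routine bookkeeping produces the advertised constants. Everything else is algebra, so the lemma follows.
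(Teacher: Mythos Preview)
Your approach is essentially the same as the paper's: the paper does not give a detailed argument either, but simply invokes a large-deviation theorem for vector-valued martingales (citing Lemma~4 of Ghadimi--Lan), which is exactly the Pinelis/Juditsky--Nemirovski style bound you appeal to. So the overall strategy is correct and matches.

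There is, however, a small slip in your constant-matching. You derive the sufficient condition
\[
|\Se_1|\ \ge\ \frac{4G^2\bigl(1+\sqrt{3}\log(1/\delta')\bigr)^2}{\epsilon_4^2}
\]
and then note that $\bigl(1+\sqrt{3}\log(1/\delta')\bigr)^2 \ge 1+3\log^2(1/\delta')$. But this inequality points the wrong way for your purpose: the lemma's hypothesis is the \emph{smaller} threshold $|\Se_1|\ge 4G^2(1+3\log^2(1/\delta'))/\epsilon_4^2$, and showing that a \emph{larger} threshold suffices does not establish the lemma as stated. To match the constants cleanly you would need a concentration bound of the form $\|\frac{1}{m}\sum Y_i\|\le \frac{2G}{\sqrt m}\sqrt{1+3\log^2(1/\delta')}$ (which is the shape of the Ghadimi--Lan lemma the paper cites), rather than the additive form $\frac{2G}{\sqrt m}(1+\sqrt{3}\log(1/\delta'))$. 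This is a cosmetic issue of invoking the right version of the inequality, not a gap in the underlying idea.
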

{\bf Remark:} The above lemma makes an extra assumption of $\E[\exp(\|\nabla f_i(\x) - \nabla f(\x)\|/G)]\leq \exp(1)$ for any $\x\in\R^d$,  which is necessary for establishing a high probability result. When each component function $f_i(\x)$ is $G$-Lipschitz continuous, this assumption is automatically satisfied. The lemma can be proved by using large deviation theorem of vector-valued martingales (e.g., see~\citep{Ghadimi:2016:MSA:2874819.2874863}[Lemma 4]). 

Based on the above two lemmas, we can derive a corollary of Theorem~\ref{thm:GNC-A-stoc} to fully quantify the time complexity of SNCG.  
\begin{cor}
\label{cor:GNC-A-stoc}
Assume that $\E[\exp(\|\nabla f_i(\x) - \nabla f(\x)\|/G)]\leq \exp(1)$ holds for any $\x\in\R^d$, and $\epsilon_2 = \epsilon_1^\alpha$ for $\alpha\in(0,1]$. Set $|\mathcal{S}_1|=\max\left(\frac{32G^2}{\epsilon_1^2},\frac{2304G^2L_2^4}{\epsilon_2^4}\right)(1+3\log^2(\frac{2}{\delta'}))$ and $|\mathcal{S}_2|=\frac{9216L_1^2}{\epsilon_2^2}\log(\frac{4d}{\delta'})$.
With probability $1-\delta$, the SNCG algorithm terminates at some iteration $j_*$ with
\begin{align}
	j_*\leq 1+\max\left(\frac{48L_2^2}{\epsilon_2^3},\frac{8L_1}{\epsilon_1^2}\right)\Delta,
\end{align}
and upon termination it holds that $\|\nabla f(\x_{j_*})\|\leq2\epsilon_1 $ and $\lambda_{\text{min}}\left(\nabla^2f(\x_{j_*})\right)\geq -2\epsilon_2$ with probability $1-3\delta$.
Furthermore, the worst-case time complexity of SNCG is given by 
\[
\widetilde O\left(T_h^1 \max\left(\frac{1}{\epsilon_1^2\epsilon_2^{2}}, \frac{1}{\epsilon_2^{5}}\right) +T_g^1\max\left(\frac{1}{\epsilon_1^4},   \frac{1}{\epsilon_1^2\epsilon_2^3}, \frac{1}{\epsilon_2^7}, \frac{1}{\epsilon_1^2\epsilon_2^4}\right)\right)
\]
\end{cor}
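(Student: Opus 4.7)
The plan is to derive the corollary as a direct specialization of Theorem~\ref{thm:GNC-A-stoc}: we first pick values of $\epsilon_3$ and $\epsilon_4$ that are at the largest values permitted by Assumption~\ref{ass:stoc}, then use Lemmas~\ref{lem:Hc} and~\ref{lem:gc} to translate those tolerances into the sample-size prescriptions stated in the corollary, and finally plug the resulting $|\mathcal{S}_1|$ and $|\mathcal{S}_2|$ into the per-iteration time bound of Theorem~\ref{thm:GNC-A-stoc} and multiply by the iteration bound.

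First I would set $\epsilon_3 = \epsilon_2/24$ and $\epsilon_4 = \min\!\bigl(\epsilon_1/(2\sqrt{2}),\,\epsilon_2^2/(24L_2)\bigr)$, which are precisely the extremes allowed in Assumption~\ref{ass:stoc}. Applying Lemma~\ref{lem:Hc} at tolerance $\epsilon_3$ with failure probability $\delta'/2$ yields the stated bound on $|\mathcal{S}_2|$ (the constant $9216$ comes from squaring $24\cdot 4 = 96$ and multiplying by $16$, and the $\log(4d/\delta')$ accounts for the $\delta'/2$ split). Applying Lemma~\ref{lem:gc} at tolerance $\epsilon_4$ with failure probability $\delta'/2$ yields the stated bound on $|\mathcal{S}_1|$ after using $1/\epsilon_4^2 = \max\bigl(8/\epsilon_1^2, (24L_2)^2/\epsilon_2^4\bigr)$. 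A union bound over the $j_*$ iterations (each requiring one gradient and one Hessian concentration event plus one Lanczos success) with failure probability $\delta'$ per event, together with the choice $\delta' = \delta/(1+\max(48L_2^2/\epsilon_2^3, 8L_1/\epsilon_1^2)\Delta)$ already made inside SNCG, gives total failure probability bounded by $3\delta$, matching the corollary's statement.

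For the time complexity, I would substitute the chosen $|\mathcal{S}_1|$ and $|\mathcal{S}_2|$ into the per-iteration bound
\[
\widetilde O\!\left(T_h^1 |\mathcal{S}_2| + T_h^1 |\mathcal{S}_2|^{3/4}\frac{\sqrt{L_1}}{\max(\epsilon_2, \|\g(\x_j)\|^\alpha)^{1/2}} + |\mathcal{S}_1| T_g^1\right).
\]
In the worst case the denominator in the middle term is just $\sqrt{\epsilon_2}$; a short calculation shows $|\mathcal{S}_2|^{3/4} = \widetilde O(L_1^{3/2}/\epsilon_2^{3/2})$, so both Hessian-related terms collapse to $\widetilde O(T_h^1/\epsilon_2^2)$ (absorbing $L_1$ factors into $\widetilde O$). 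The stochastic-gradient term contributes $\widetilde O(T_g^1 \max(1/\epsilon_1^2, 1/\epsilon_2^4))$ per iteration. Multiplying each by the iteration bound $j_* = O(\max(1/\epsilon_2^3, 1/\epsilon_1^2))$ and distributing the $\max$ over the product gives the four gradient terms $\{1/\epsilon_1^4,\ 1/(\epsilon_1^2\epsilon_2^3),\ 1/\epsilon_2^7,\ 1/(\epsilon_1^2\epsilon_2^4)\}$ and the two Hessian terms $\{1/(\epsilon_1^2\epsilon_2^2),\ 1/\epsilon_2^5\}$ appearing in the statement.

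The main obstacle, though entirely bookkeeping, is to track constants carefully across the three pieces — the Hessian concentration factor $\epsilon_3 = \epsilon_2/24$, the gradient tolerance split $\min(\epsilon_1/(2\sqrt{2}), \epsilon_2^2/(24L_2))$ (which is the source of the apparent $L_2$-power that dictates the $\epsilon_2^{-4}$ in $|\mathcal{S}_1|$), and the $3\delta$ aggregation — and to verify that the worst case of the per-iteration middle term really reduces as claimed when the stochastic gradient's norm is not below the $\epsilon_1$ termination threshold. Everything else is elementary substitution into bounds already proved.
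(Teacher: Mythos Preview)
Your proposal is correct and follows exactly the route the paper intends: the paper does not spell out a proof of this corollary but presents it as an immediate consequence of Theorem~\ref{thm:GNC-A-stoc} once Lemmas~\ref{lem:Hc} and~\ref{lem:gc} are invoked to convert the tolerances $\epsilon_3,\epsilon_4$ of Assumption~\ref{ass:stoc} into the stated sample sizes. Your bookkeeping is right (modulo your garbled parenthetical about where $9216$ comes from---it is simply $16\cdot 24^2$---and the $L_2^4$ in the statement of $|\mathcal S_1|$, which your own computation correctly yields as $L_2^2$; this appears to be a typo in the paper).
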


{\bf Remark:} If  $T^h_1$ is linearly dependent on the dimensionality, to the best of our knowledge, the above result is the first high-probability second-order convergence result for stochastic non-convex optimization with a time complexity independent of the sample size and almost linear in the problem's dimsionality.   Fix $\epsilon_2=\sqrt{\epsilon_1}$, we can  compare with a recent algorithm called Natasha2 proposed in~\citep{natasha2}, which  uses a time complexity of $\widetilde O\left( T^1_g\epsilon_1^{-3.5}+T^1_h\epsilon_1^{-2.5} \right)$ to find  an $(\epsilon_1, \sqrt{\epsilon_1})$-second-order stationary solution with only a constant probability $2/3$. In contrast, SNCG achieves this with a time complexity of $\widetilde O(T^1_g\epsilon_1^{-4} + T^1_h\epsilon_1^{-3})$ in high probability. Although  Natasha2's time complexity has a better dependence on $\epsilon_1$, SNCG is a stochastic algorithm  that comes with a {\bf high probability} second-order convergence result and has a time complexity independent of the sample size and almost linear in the problem's dimensionality. It remains an open question whether the result in~\citep{natasha2} can be boosted to a high probability result.

\subsection{Some Detailed Comparison}
In this subsection, we provide more details on the comparison with two recent works~\citep{clement17,peng16inexacthessian}, which is worth more discussions.  We will compare with these  works from several aspects: (i) algorithm design;  (ii) per-iteration costs;  (iii) convergence guarantee and worst-case time complexity; 
\begin{itemize}
    \item 
Algorithm Design. The algorithms in \citep{clement17} are based on a line-search scheme. A search direction is first determined among the negative gradient, the negative curvature, the Newton step and the regularized Newton step, and then a step size is found by bactracking line search. The Newton step or the regularized Newton step are solved by the conjugate gradient method. It is unclear whether their methods can be modified to incorporate inexact Hessian. In addition, the conjugate gradient method might fail due to the noise estimation of the smallest eigen-value of the Hessian.   \cite{peng16inexacthessian} focused on developing variants of the trust-region methods~\citep{conn2000trust} and adaptive cubic-regularization method~\citep{Cartis:2012:CBS:2076038.2076303} to incorporate inexact Hessian. The sub-problems (the trust-region problem or the cubic regularized problem) in their methods are solved approximately to satisfy certain conditions such that the search direction are at least as good as the negative gradient direction and the negative curvature directions. Hence, they proposed to solve the sub-problems over a two-dimensional space spanned by the negative gradient and the negative curvature direction.  In comparison, the updating steps in this paper are much simpler than that in~\citep{clement17,peng16inexacthessian}. On the other hand, the algorithms in \citep{clement17,peng16inexacthessian} do not require the Lipschitz constants $L_1, L_2$ for updating the solution, and they are adaptive to the local Lipschitz constants. Although our algorithms are presented with the knowledge of $L_1$ and $L_2$, however,  the backtracking search technique can be easily incorporated into our NCG step as used in previous algorithms~\citep{Nesterov_Composite,doi:10.1137/16M1087801} to search for local Lipschitz constants. 
\item Per-iteration costs. The major cost of the NCG step lie at computing a noisy negative curvature. Besides this cost, the algorithms in~\citep{clement17} may need to compute the Newton or regularized Newton steps and the algorithms in~\citep{peng16inexacthessian} need to solve the trust-region or cubic regularized problems. If we are only concerned with time complexity of each iteration, each step of \citep{clement17}'s inexact algorithm is $\widetilde O(1/\sqrt{\epsilon_2})$. For~\citep{peng16inexacthessian}, let us consider the variants that solve the sub-problems over a two-dimensional space spanned by the negative gradient and the negative curvature directions. So the major costs lie at computing the negative curvature direction, which is to compute an approximate negative curvature $\s_E$ such that ${\s_E}^{\top} H(\x)\s_E\leq -\nu \epsilon_2\|\s_E\|^2$  for $\lambda_{\min}(H(\x))\leq \epsilon_2$ and some $\nu$ according to the result in~\citep{peng16inexacthessian}~\footnote{See their Appendix B. }. If the Lanczos method is applied to  the matrix $\widehat H =L_1I - H(\x)$ for finding $\s_E$ such that 
\begin{align}\label{eqn:sE}
\frac{\s_E^{\top}\widehat H\s_E}{\|\s_E\|^2}\geq (1 - \frac{\epsilon_2}{2L_1 + \epsilon_2})\lambda_{\max}(\widehat H), 
\end{align}
from which we can derive that 
\begin{align*}
\s_E^{\top}H\s_E\leq \frac{\epsilon_2}{2L_1+\epsilon_2}L_1\|\s_E\|^2 + \frac{2L_1}{2L_1+\epsilon_2}\lambda_{\min}(H)\|\s_E\|^2\leq - \frac{L_1}{L_1 + \epsilon_2}\epsilon_2\|\s_E\|^2
\end{align*}
The time complexity of the Lanczos method for achieving~(\ref{eqn:sE}) is $\widetilde O(1/\sqrt{\epsilon_2})$. Hence the time complexity of each step in \citep{peng16inexacthessian}'s algorithms is at least $\widetilde O(1/\sqrt{\epsilon_2})$. In contrast, the time complexity of each step in our algorihtms is $\widetilde O(1/\sqrt{\max(\epsilon_2, \|\nabla f(\x_j)\|^\alpha)})$, where $\alpha\leq 1$ is some constant, which could be much less than $\widetilde O(1/\sqrt{\epsilon_2})$.  

\item Convergence Guarantee and Worse-case time complexity. The algorithms in~\citep{clement17} are guaranteed to find two solutions $\x_k, \x_{k+1}$ to satisfy 
\begin{align}\label{eqn:cleg}
\min(\|\nabla f(\x_k)\|, \|\nabla f(\x_{k+1})\| )\leq \epsilon_1, \quad \lambda_{\min}(\nabla^2 f(\x_k))\geq - \epsilon_2, 
\end{align}
which is slightly different from~(\ref{eqn:sog}) that is on a single point. Note that the condition~(\ref{eqn:sog}) implies~(\ref{eqn:cleg}). The analysis provided in~\citep{peng16inexacthessian} is to guarantee that the found solution $\x$ satisfies 
\begin{align}
    \|\nabla f(\x)\|\leq \epsilon_1, \quad \lambda_{\min}(H(\x))\geq - \epsilon_2, 
\end{align}
where $H(\x)$ is an inexact Hessian, which is weaker than~(\ref{eqn:sog}). Nevertheless, the same result~(\ref{eqn:sog}) can be derived under a stronger condition on the inexact Hessian such as~(\ref{eqn:ine})  than that provided in~\citep{peng16inexacthessian} as mentioned at the beginning of this section. In the following comparisons, we assume the same result~(\ref{eqn:sog}) for \citep{peng16inexacthessian}. 

The worse-case time complexity of the inexact variant of the algorithm in~\citep{clement17} is given by 
\begin{align}
\label{complexity:sw}
\widetilde{O}\left(\frac{1}{\sqrt{\epsilon_2}}\left(\frac{\epsilon_2^3}{\epsilon_1^3}+\frac{1}{\epsilon_1^{3/2}}+\frac{1}{\epsilon_2^3}+\frac{(\sqrt{\epsilon_1+\epsilon_2^2}+\epsilon_2)^3}{\epsilon_1^3}\right)T_h\right).
\end{align}
It is not difficult show that when $\epsilon_2\leq \epsilon_1^{1/2}$, the above time complexity is dominated $\widetilde O\left(\frac{T_h}{\epsilon_2^{7/2}}\right)$. In comparison, the worse-case time complexity of NCG-B is also dominated by $\widetilde O\left(\frac{T_h}{\epsilon_2^{7/2}}\right)$ by simply considering each iteration takes $\widetilde O(T_h/\sqrt{\epsilon_2})$ time. 

For~\citep{peng16inexacthessian}, we consider two algorithms (their Algorithm 1 and Algorithm 2) that solve the sub-problems approximately over a two dimensional subspace. The iteration complexity of their Algorithm 1 is $O(\max(\epsilon_1^{-2}\epsilon_2^{-1}, \epsilon_2^{-3}))$, which gives the worse-case time complexity of $\widetilde O\left(\max\left(\frac{T_h}{\epsilon_1^{2}\epsilon_2^{3/2}}, \frac{T_h}{\epsilon_2^{7/2}}\right)\right)$ by considering the time complexity of each iteration discussed above, which is dominated by  $\widetilde O\left(\frac{T_h}{\epsilon_1^{2}\epsilon_2^{3/2}}\right)$ and hence is no better than $\widetilde O\left(\frac{T_h}{\epsilon_2^{7/2}}\right)$ for $\epsilon_2\leq \epsilon_1^{1/2}$. Their Algorithm 2 has an iteration complexity of $O(\max(\epsilon_1^{-2}, \epsilon_2^{-3})$, yielding a worst-case time complexity of  $\widetilde O\left(\max\left(\frac{T_h}{\epsilon_1^{2}\epsilon_2^{1/2}}, \frac{T_h}{\epsilon_2^{7/2}}\right)\right)$. This is worse than $\widetilde O\left(\frac{T_h}{\epsilon_2^{7/2}}\right)$ when $\epsilon_2\geq \epsilon_1^{2/3}$. It is notable that they also provide a result with optimal iteration complexity of $O(\max(\epsilon_1^{-3/2}, \epsilon_2^{-3}))$. However, such a result requires the algorithm to solve the sub-problems over progressively embedded subspaces with increasingly higher dimensions to satisfy a certain criterion, which leaves time complexity unclear. 

\end{itemize}

\subsection{Application to Strict-Saddle functions}
In this subsection, we consider several examples of strict-saddle functions and demonstrate how the proposed algorithms can be used to find a nearly local optimal solution. We use the following definition of strict-saddle function from~\citep{jin2017escape}.

\begin{ass}
	\label{ass:strict-saddle1}
	A function $f(\x)$ is $(\theta, \gamma, \zeta)$-strict saddle. That is, for any $\x$, at least one of the following holds: 
	\begin{itemize}
		\item $|\nabla f(\x)|\geq \theta$
		\item $\lambda_{\min}(\nabla^2 f(\x))\leq -\gamma$
		\item $\x$ is $\zeta$-close to $\mathcal X_*$ - the set of local minima. 
	\end{itemize}
\end{ass}
For a strict saddle function satisfying the Assumption \ref{ass:strict-saddle1}, to guarantee the solution is $\zeta$-close to the set of local minima, we can  solve the optimization problem to find an $(\theta, \gamma)$-second-order stationary point $\x$ such that:
\begin{align}
\label{guarantee:us}
	\|\nabla f(\x)\|\leq\theta,\;\;\lambda_{\text{min}}\left(\nabla^2 f(\x)\right)\geq-\gamma.
\end{align}
Below, we adopt several examples of strict saddle functions studied in previous work and show the time complexities of our algorithms (NCG-A, NCG-B) for finding a solution that is $\zeta$-close to a local minima.
\paragraph{Matrix Factorization} Consider the the following symmetric low-rank matrix factorization problem:
\begin{align*}
	\min_{\U\in\R^{d\times r}}f(\U)=\frac{1}{2}\|\U\U^\top-\mathbf{M}\|_F^2,
\end{align*}
where $\mathbf{M}\in\R^{d\times d}, \text{rank}(\mathbf{M})=r$. Denote $\sigma_i(\mathbf{M})$ by the $i$-th largest singular value of $\mathbf{M}$. As is shown in Lemma 6 and Lemma 7 of~\citep{jin2017escape}, the function is $(\theta = \frac{1}{24}\sigma_r^{3/2}, \gamma=1/3\sigma_r, \zeta=1/3\sigma_r^{1/2})$-strict-saddle function, and have Lipschitz continuous gradient and Hessian with $L_1 = 8\mathcal T$ and $L_2 = 12\sqrt{\mathcal T}$ for $\|U\|_2^2<\mathcal T$. To find a solution that is $O(\sigma_r^{1/2})$-close to a local minima, we can set $\epsilon_1 = \frac{1}{24}\sigma_r^{3/2}$ and $\epsilon_2 =\left(\frac{1}{24}\right)^{2/3}\sigma_r\leq\frac{1}{3}\sigma_r$, and run the proposed algorithms (NCG-A, NCG-B).  Their worse-case time complexity is in the same order of $\widetilde{O}(T_h\sigma_r^{-7/2})$. This result has a better dependence on $\sigma_r$ than that established in~\citep{jin2017escape}, which is $\widetilde O(T_g\sigma_r^{-4})$. 

\paragraph{Dictionary Learning over the Sphere}This problem considered in~\citep{sun2015complete} is to recover an invertible matrix $A\in\R^{n\times n}$ (dictionary) from response $Y\in\R^{n\times p}$, where $Y=AX$, and $X\in\R^{n\times p}$ (coefficient matrix) is sufficiently sparse. In addition, assume $X$ follows the Bernoulli-Gaussian model with rate $\beta$: $X_{ij}=\Omega_{ij}V_{ij}$, with $\Omega_{ij}\sim\text{Bernoulli}(\beta)$ and $V_{ij}\sim N(0,1)$. Under this setting, the row spaces of $X$ and $Y$ are identical. If we want to find the direction with highest sparsity level in the row space of $Y$, the following optimization problem is formulated:
\begin{align*}
	\min_{\d\in\R^n}\|\d^\top Y\|_0,\;\text{ s.t. } \d\neq 0.
\end{align*}
According to~\citep{sun2015complete}, this optimization problem can be relaxed to
\begin{align*}
	\min_{\d}f(\d):=\frac{1}{p}\sum_{k=1}^{p}h_\mu(\d^\top\widehat{\y}_k), \text{ s.t. } \|d\|_2=1,
\end{align*}
where $\widehat{Y}$ is a proxy for $Y$, $\widehat{\y}_k$ is the $k$-th column of $\widehat{Y}$, $h_\mu$ is a smooth approximation of the absolute function. One choice of $h_\mu$ is 
\begin{align*}
	h_\mu(z)=\mu\log\left(\frac{\exp(z/\mu)+\exp(-z/\mu)}{2}\right)=\mu\log\cosh(z/\mu),
\end{align*}
which is infinitely differentiable and $\mu$ controls the smoothness level. In Theorem 2.3 and Corollary 2.4 of~\citep{sun2015complete}, it has been shown that when $0<\theta<1/2$, with high probability, the function is $(\theta=c\beta,\gamma=c\beta,\zeta=\sqrt{2}\mu/7)$-strict saddle, where $c$ is a constant and $\beta$ characterize the sparsity level of $X$. To solve this problem, we can set $\epsilon_1 = \epsilon_2= c\beta$ in our algorithms (NCG-A, NCG-B), which have the same worst-case time complexity of  $\widetilde{O}(1/\beta^{7/2})$.

\paragraph{Matrix Sensing} The following  matrix sensing problem is studied in~\citep{ge2017no}:
\begin{align*}
	\min_{\U,\mathbf{V}}\frac{1}{2m}\sum_{i=1}^{m}\left(\text{tr}(\U\mathbf{V}^\top \mathbf{A}_i^\top)-b_i\right)^2+\frac{1}{8}\|\U^\top\U-\mathbf{V}\mathbf{V}^\top\|_F^2,
\end{align*}
where $\U\in\R^{d_1\times r}$, $\mathbf{V}\in\R^{d_2\times r}$, $\mathbf{A}_1,\ldots,\mathbf{A}_m\in\R^{d_1\times d_2}$ are sensing matrices, and $\b_i\in\R$ can be observed. It is shown in Theorem 3 of~\citep{ge2017no} that if for every matrix 
$M$ of rank at most $2r$, $(1-\delta)\|\mathbf{M}\|_F^2\leq\frac{1}{m}\sum_{i=1}^{m}\text{tr}^2(\mathbf{A}_i\mathbf{M}^\top)\leq (1+\delta)\|\mathbf{M}\|_F^2$ holds with $\delta=1/20$, then the objective function is $(\theta,\frac{1}{5}\sigma_r,\frac{20\theta}{\sigma_r})$-strict saddle, where $\sigma_r$ stands for the $r$-th largest singular value of $\mathbf{M}$. Without loss of generality, assume $0<\sigma_r<1$. It is notable that $\theta$ is a free parameter. The larger the $\theta$, the smaller the $\gamma$. Let us set $\epsilon_1=(\sigma_r/5)^{1/\alpha}$, $\epsilon_2=\frac{1}{5}\sigma_r$ in our algorithms,  where $\alpha< 1$. We consider the following several cases of $\alpha$.
\begin{itemize}
\item When $2/3\leq \alpha< 1$, NCG-A, NCG-B  have the same worst-case time complexity of $\widetilde{O}(1/\sigma_r^{7/2})$ for finding a solution that is $O(\sigma_r^{1/\alpha -1})$-close to a local minima.
\item When $1/2\leq \alpha\leq 2/3$, NCG-B has the best worst-case time complexity of $\widetilde{O}(1/\sigma_r^{7/2})$ for finding a solution that is $O(\sigma_r^{1/\alpha -1})$-close to a local minima.
\item When $0< \alpha\leq 1/2$, NCG-B has the best worst-case time complexity of $\widetilde{O}(1/\sigma_r^{1/\alpha + 3/2})$ for finding a solution that is $O(\sigma_r^{1/\alpha -1})$-close to a local minima.
\end{itemize}
Since the larger the $\alpha$, the closer the solution is to a local optimum. As a result, NCG-B  always has the best worst-case complexity of  $\widetilde{O}(1/\sigma_r^{1/\alpha + 3/2})$ to find a solution that is $O(\sigma^{1/\alpha -1})$-close to a local minima by setting $\alpha\leq 1/2$.

\section{Conclusions}\label{sec:conc}
In this paper, we have developed a new updating step employing a noisy negative curvature direction for non-convex optimization. The novelty of the proposed algorithms lie at that the noise level in approximating the negative curvature is adaptive to the magnitude of the current gradient instead of a prescribed small noise level, which could dramatically reduce the number Hessian-vector products.  Building on the basic step, we have developed several algorithms and established their iteration complexities and worst-case time complexities. We have also developed their variants for using inexact Hessian. Under a mild condition on the inexact Hessian, we obtain the same complexity results as their counterparts with exact Hessian.  We also develop a stochastic algorithm for a finite-sum non-convex optimization problem, which converges to a second-order stationary point in {high probability} with a time complexity independent of the sample size and almost linear in the problem's dimensionality.

\bibliography{ref,all}
\bibliographystyle{plain}
\newpage
\appendix
\section{\textbf{Proof of Lemma \ref{lem:NCG}}}
\begin{proof}
Denote $\eta_j=\frac{2|\v_j^\top \nabla^2f(\x_j)\v_j|}{L_2}\text{sign}(\v_j^\top\nabla f(\x_j))$. Let $\x_{j+1}^1 = \x_j - \eta_j \v_j$ denote the updated solution if following $\v_j$ and $\x_{j+1}^2 = \x_j - \nabla f(\x_j)/L_1$ denote the updated solution if following $\nabla f(\x_j)$.  Since $f(\x)$ has a $L_2$-Lipschitz continuous Hessian, we have
\begin{align*}
|f(\x^1_{j+1})-f(\x_{j})+\eta_j\v_j^\top\nabla f(\x_j)-\frac{1}{2}\eta_j^2\v_j^\top\nabla^2f(\x_j)\v_j|\leq\frac{L_2}{6}\|\eta_j\v_j\|^3.
\end{align*}
By noting that $\eta_j\v_j^\top\nabla f(\x_j)\geq 0$, we have
\begin{align*}
    f(\x_j)- f(\x^1_{j+1})\geq -\frac{1}{2}\eta_j^2\v_j^\top\nabla^2f(\x_j)\v_j - \frac{L_2}{6}\|\eta_j\v_j\|^3= \frac{2|\v_j\nabla^2f(\v_j)\v_j|^3}{3L_2^2} \triangleq \Delta_1. 
\end{align*}
By the smoothness of $f(\x)$,  we have
\begin{align*}
f(\x^2_{j+1})& \leq f(\x_j) + \nabla f(\x_j)^{\top}(\x_{j+1} - \x_j) + \frac{L_1}{2}\|\x_{t+1} - \x_t\|^2\\
&= f(\x_j) - \frac{1}{L_1}\|\nabla f(\x_j)\|_2^2 + \frac{L_1\eta^2}{2}\|\nabla f(\x_j)\|^2\\
&\leq f(\x_j) - \frac{1}{2L_1}\|\nabla f(\x_j)\|^2
\end{align*}
As a result,  $f(\x_{j})-f(\x_{j+1}^2)\geq  \frac{\|\nabla f(\x_j)\|^2}{2L_1}\triangleq \Delta_2$.

According to the update rule in NCG,  if $\Delta_1>\Delta_2$, we have $\x_{j+1} = \x_{j+1}^1$ and $f(\x_j) - f(\x_{j+1})\geq \Delta_1 = \max(\Delta_1, \Delta_2)$. If $\Delta_2\geq \Delta_1$, then $\x_{j+1} = \x^2_{j+1}$ and $f(\x_j) - f(\x_{j+1})\geq \Delta_2 = \max(\Delta_1, \Delta_2)$. In both cases, we have $f(\x_j) - f(\x_{j+1})\geq \max(\Delta_1, \Delta_2)$. 
\end{proof}

\section{\textbf{Proof of Theorem \ref{thm:gnc-a2:iteration}}}

\begin{proof}
Let $j_*$ denote the $j$ such that the algorithm terminates. Then for all $j<j_*$, we have $\|\nabla f(\x_j)\|> \epsilon_1$, or $\v_j^{\top}\nabla^2 f(\x_j)\v_j\leq  - \epsilon_2/2$. 
According to Lemma~\ref{lem:NCG}, we have
\begin{align*}
f(\x_j) - f(\x_{j+1})\geq \max\left( \frac{2|\v_j^{\top}\nabla ^2 f(\x_j)\v_j|^3}{3L_2^2}, \frac{\|\nabla f(\x_j)\|^{2}}{2L_1}\right)
\end{align*}
Let us consider three cases. Case 1: $\|\nabla f(\x_j)\|> \epsilon_1$ and $\v_j^{\top}\nabla^2 f(\x_j)\v_j\leq  -\epsilon_2/2$, then we have
\begin{align*}
\max\left(\frac{\epsilon_2^3}{12L_2^2}, \frac{\epsilon_1^{2}}{2L_1}\right)\leq f(\x_j) - f(\x_{j+1})
\end{align*}
Case 2:  $\|\nabla f(\x_j)\|\leq\epsilon_1$ and $\v_j^{\top}\nabla^2 f(\x_j)\v_j\leq  -\epsilon_2/2$, we have
\begin{align*}
\frac{\epsilon_2^3}{12L_2^2}\leq f(\x_j) - f(\x_{j+1})
\end{align*}
Case 3:  $\|\nabla f(\x_j)\|> \epsilon_1$ and $\v_j^{\top}\nabla^2 f(\x_j)\v_j>  - \epsilon_2/2$, we have
\begin{align*}
\frac{\epsilon_1^2}{2L_1}\leq f(\x_j) - f(\x_{j+1})
\end{align*}
In any case, we have
\begin{align*}
\min\left(\frac{\epsilon_1^2}{2L_1}, \frac{\epsilon_2^3}{12L_2^2}\right)\leq f(\x_j) - f(\x_{j+1})
\end{align*}
Then with at most $j_* = 1 +  \max\left(\frac{12L_2^2}{\epsilon_2^3}, \frac{2L_1}{\epsilon_1^2}\right)\Delta$, the algorithm terminates. 
Note that $\epsilon_2=\epsilon_1^\alpha$, we know that $j_* = 1 +  \max\left(\frac{12L_2^2}{\epsilon_1^{3\alpha}}, \frac{2L_1}{\epsilon_1^2}\right)\Delta$.

Upon termination, we have with probability at least $1-j_*\delta'$, i.e. with probability at least $1-\delta$, 
\[
\lambda_{\min}(\nabla^2 f(\x_{j_*}))\geq -\epsilon_2/2 - \max(\epsilon_2, \|\nabla f(\x_{j_*})\|^\alpha)/2=-\epsilon_1^\alpha/2 - \max(\epsilon_1^\alpha, \|\nabla f(\x_{j_*})\|^\alpha)/2.
\]
Since $\|\nabla f(\x_{j_*})\|\leq\epsilon_1$, we have $$\max(\epsilon_1^\alpha, \|\nabla f(\x_{j_*})\|^\alpha)=\epsilon_1^\alpha,$$ and hence
$\lambda_{\min}(\nabla^2 f(\x_{j_*}))\geq-\epsilon_1^\alpha.$

The running time spent on the $j$-th iteration follows from Lemma~\ref{lemma:lanczos}.
\end{proof}

\section{Proof of Theorem \ref{thm:gnc-a3:iteration}}
Before diving into the proofs, we first present the procedure Almost-Convex-AGD and  introduce some propositions which are useful for our further analysis. 
\begin{algorithm}[H]
\caption{Almost-Convex-AGD$(f,\z_1,\epsilon,\gamma,L_1)$}
\begin{algorithmic}[1]
\FOR{$j=1,2,\ldots$}
\IF{$\|\nabla f(\z_j)\|\leq \epsilon$}
\RETURN $\z_j$
\ENDIF
\STATE Define $g_j(\z)=f(\z)+\gamma\|\z-\z_j\|^2$
\STATE set $\epsilon'=\epsilon\sqrt{\gamma/50(L_1+2\gamma)}$
\STATE $\z_{j+1}=\text{Accelerated-Gradient-Descent}(g_j,\z_j,\epsilon',L_1,\gamma)$
\ENDFOR
\end{algorithmic}
\end{algorithm}

\begin{algorithm}[H]
\caption{Accelerated-Gradient-Descent$(f,\y_1,\epsilon,L_1,\sigma_1)$}
\begin{algorithmic}[1]
\STATE Set $\kappa=L_1/\sigma_1$, $\z_1=\y_1$
\FOR{$j=1,2,\ldots$}
\IF{$\|\nabla f(\y_j)\|\leq\epsilon$}
\RETURN $\y_j$
\ENDIF
\STATE $\y_{j+1}=\z_j-\frac{1}{L_1}\nabla f(\z_j)$
\STATE $\z_{j+1}=(1+\frac{\sqrt{\kappa}-1}{\sqrt{\kappa}+1})\y_{j+1}-\frac{\sqrt{\kappa}-1}{\sqrt{\kappa}+1}\y_j$
\ENDFOR
\end{algorithmic}
\end{algorithm}

\begin{prop}[Lemma 3.1 of~\cite{DBLP:journals/corr/CarmonDHS16}]
\label{prop:almostconvexagd}
    Let $f:\R^d\rightarrow\R$ be $\gamma$-almost convex and $L_1$-smooth, where  $0<\gamma\leq L_1$. Then Almost-Convex-AGD$(f,\z_1,\epsilon,\gamma,L_1)$ returns a vector $\z$ such that $\|\nabla f(\z)\|\leq\epsilon$ and
    \begin{align}
    \label{ineqn:progressive}
        f(\z_1)-f(\z)\geq\min\left\{\gamma\|\z-\z_1\|^2,\frac{\epsilon}{\sqrt{10}\|\z-\z_1\|}\right\}
    \end{align}
    in time
    \begin{align}
    \label{ineqn:almostconvexagdtime}
    O\left(T_g\left(\sqrt{\frac{L_1}{\gamma}}+\frac{\sqrt{\gamma L_1}}{\epsilon^2}(f(\z_1)-f(\z))\right)\log\left(2+\frac{L_1^3\Delta}{\gamma^2\epsilon^2}\right)\right)
    \end{align}
\end{prop}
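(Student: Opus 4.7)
This result is adopted from Lemma~3.1 of \citep{DBLP:journals/corr/CarmonDHS16}. The plan is to view Almost-Convex-AGD as an inexact proximal-point method whose inner subproblem is solved by Accelerated-Gradient-Descent, and to carry out a standard prox-AGD analysis.

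First I would establish the conditioning of the inner objective. Since $\gamma$-almost convexity of $f$ gives $\nabla^2 f\succeq-\gamma I$, the proximal objective $g_j(\z)=f(\z)+\gamma\|\z-\z_j\|^2$ satisfies $\nabla^2 g_j\succeq\gamma I$ and $\|\nabla^2 g_j\|_2\leq L_1+2\gamma$, so it is $\gamma$-strongly convex and $(L_1+2\gamma)$-smooth. With $\gamma\leq L_1$, the condition number is $O(L_1/\gamma)$, and the standard gradient-norm guarantee for AGD on strongly convex functions ensures the inner call reaches $\|\nabla g_j(\z_{j+1})\|\leq\epsilon'$ in $O(\sqrt{L_1/\gamma}\log(\|\nabla g_j(\z_j)\|/\epsilon'))$ inner iterations.

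Second, I would derive the per-outer-step progress. Let $\z_j^*=\arg\min_\z g_j(\z)$. Optimality of $\z_j^*$ gives $g_j(\z_j^*)\leq g_j(\z_j)=f(\z_j)$, and $\gamma$-strong convexity combined with $\|\nabla g_j(\z_{j+1})\|\leq\epsilon'$ yields $g_j(\z_{j+1})\leq g_j(\z_j^*)+\epsilon'^2/(2\gamma)$. Expanding $g_j(\z_{j+1})=f(\z_{j+1})+\gamma\|\z_{j+1}-\z_j\|^2$ gives per-step decrease $f(\z_j)-f(\z_{j+1})\geq\gamma\|\z_{j+1}-\z_j\|^2-\epsilon'^2/(2\gamma)$, and the choice $\epsilon'=\epsilon\sqrt{\gamma/(50(L_1+2\gamma))}$ is calibrated to absorb the slack. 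Moreover, the identity $\nabla f(\z_{j+1})=\nabla g_j(\z_{j+1})-2\gamma(\z_{j+1}-\z_j)$ combined with the non-termination condition $\|\nabla f(\z_{j+1})\|>\epsilon$ forces $\|\z_{j+1}-\z_j\|\geq(\epsilon-\epsilon')/(2\gamma)=\Omega(\epsilon/\gamma)$, so each continuing outer step produces decrease $\gtrsim\epsilon^2/\gamma$.

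Third, to extract the compact form (\ref{ineqn:progressive}), I would telescope the per-step decrease together with the Cauchy-Schwarz inequality $\sum_j\|\z_{j+1}-\z_j\|^2\geq\|\z-\z_1\|^2/J$ (where $J$ is the outer-iteration count), giving total decrease $\gtrsim\gamma\|\z-\z_1\|^2/J$; trading off $J$ against the per-step minimum travel $\Omega(\epsilon/\gamma)$ produces the second branch of the minimum. The time bound (\ref{ineqn:almostconvexagdtime}) then follows by summing the inner AGD iteration count $O(\sqrt{L_1/\gamma}\log(\cdot))$ over the at most $J$ outer steps, with $J\leq(f(\z_1)-f(\z))/\Omega(\epsilon^2/\gamma)=O(\gamma(f(\z_1)-f(\z))/\epsilon^2)$, and the logarithmic factor absorbing a crude bound on $\|\nabla g_j(\z_j)\|$ in terms of $L_1$ and $\Delta$. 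The main obstacle is deriving the precise two-branch form in (\ref{ineqn:progressive}): the naive telescoped decrease only gives $\gamma\|\z-\z_1\|^2/J$, and producing the $\epsilon/(\sqrt{10}\|\z-\z_1\|)$ branch requires a careful trade-off between $J$ and the total travel $\|\z-\z_1\|$ via the per-step lower bound on $\|\z_{j+1}-\z_j\|$.
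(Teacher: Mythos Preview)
The paper does not prove this proposition: it is stated as Lemma~3.1 of \cite{DBLP:journals/corr/CarmonDHS16} and simply quoted without argument, so there is no ``paper's own proof'' to compare against. Your sketch is therefore not redundant with anything in the paper, and the first two steps (strong convexity/smoothness of $g_j$, the per-step decrease $f(\z_j)-f(\z_{j+1})\geq\gamma\|\z_{j+1}-\z_j\|^2-\epsilon'^2/(2\gamma)$, and the lower bound $\|\z_{j+1}-\z_j\|\geq(\epsilon-\epsilon')/(2\gamma)$ at non-terminal steps) are exactly the right ingredients and match the argument in \cite{DBLP:journals/corr/CarmonDHS16}.

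Two remarks on your third step. First, the displayed bound (\ref{ineqn:progressive}) almost certainly contains a typo: the second branch should be $\frac{\epsilon}{\sqrt{10}}\|\z-\z_1\|$, not $\frac{\epsilon}{\sqrt{10}\|\z-\z_1\|}$. You can see this from the paper's own application of the proposition in the proof of Theorem~\ref{thm:gnc-a3:iteration}, where it writes $\frac{\epsilon_1}{2\sqrt{10}}\|\widehat{\x}_{k-1}-\x_k\|$, and also on dimensional grounds. Second, your Cauchy--Schwarz route $\sum_j\|\z_{j+1}-\z_j\|^2\geq\|\z-\z_1\|^2/J$ followed by a trade-off in $J$ is more circuitous than necessary and, as you note, does not directly yield the stated form. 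The cleaner derivation uses the per-step travel bound directly: since each non-terminal step satisfies $\|\z_{j+1}-\z_j\|\geq c:=(\epsilon-\epsilon')/(2\gamma)$, one has $\gamma\|\z_{j+1}-\z_j\|^2\geq \gamma c\,\|\z_{j+1}-\z_j\|$, and summing with the triangle inequality $\sum_j\|\z_{j+1}-\z_j\|\geq\|\z-\z_1\|$ gives the linear branch $\Omega(\epsilon)\|\z-\z_1\|$ immediately. The quadratic branch $\gamma\|\z-\z_1\|^2$ is what survives when only the final step is taken (or dominates), and the minimum of the two covers both regimes after absorbing the $\epsilon'^2/(2\gamma)$ slack. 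With this adjustment your plan goes through.
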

\begin{prop}[Lemma 4.1 of~\cite{DBLP:journals/corr/CarmonDHS16}]
\label{prop:almostconvexagd2}
Let $f$ be $L_1$-smooth and have $L_2$-Lipschitz continuous Hessian. Let $\x_0\in\R^d$ be such that $\nabla^2 f(\x_0)\succeq-\alpha I$ for some $\alpha\geq 0$, then $f(\x)+L_1\left[\|\x\|-\frac{\alpha}{L_2}\right]_+$
is $3\alpha$-almost convex and $5L_1$-smooth.
\end{prop}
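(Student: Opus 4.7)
The plan is to reconstruct the argument from \cite{DBLP:journals/corr/CarmonDHS16}, interpreting the perturbation as $\rho(\x) := L_1\bigl([\|\x-\x_0\|-\alpha/L_2]_+\bigr)^2$ (matching the formula for $f_k$ in Algorithm~\ref{alg:gnc-a3}; the display without the square fails to be even $C^1$), and translating so that WLOG $\x_0 = 0$. Both claimed properties become pointwise bounds on $\nabla^2(f+\rho)$ that I would check via Weyl's inequality $\lambda_{\min}(\nabla^2(f+\rho))\geq\lambda_{\min}(\nabla^2 f)+\lambda_{\min}(\nabla^2\rho)$ together with an explicit computation of $\nabla^2\rho$.

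For the Hessian of $\rho$, writing $t = \|\x\|$, $r = \alpha/L_2$, and $\hat{\x} = \x/t$, a direct calculation gives $\nabla^2\rho = 0$ for $t \leq r$ and
\[
\nabla^2\rho(\x) = 2L_1\left[\frac{r}{t}\hat{\x}\hat{\x}^\top + \Bigl(1-\frac{r}{t}\Bigr)I\right]
\]
for $t > r$, with eigenvalues $2L_1$ radially and $2L_1(1-r/t)\in[0,2L_1)$ tangentially. Thus $0 \preceq \nabla^2\rho \preceq 2L_1 I$, which combined with $\|\nabla^2 f\|_2 \leq L_1$ gives $\|\nabla^2(f+\rho)\|_2 \leq 3L_1 \leq 5L_1$; this settles smoothness.

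For $3\alpha$-almost convexity I would use the pointwise best lower bound $\lambda_{\min}(\nabla^2 f) \geq \max(-L_1,\,-(\alpha+L_2 t))$ coming from smoothness and Hessian-Lipschitzness, and verify $\lambda_{\min}(\nabla^2 f)+2L_1(1-r/t)\geq -3\alpha$. The case $t \leq r$ is immediate: $\nabla^2\rho = 0$ and Hessian-Lipschitzness yields $\nabla^2 f \succeq -2\alpha I$. For $t>r$ I would split according to whether $t$ lies in the Hessian-Lipschitz-dominated sub-range $(r,t_\star]$ with $t_\star := (L_1-\alpha)/L_2$ (nonempty only when $L_1 > 2\alpha$) or in the smoothness-dominated sub-range $t > t_\star$. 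In the first sub-range, rearranging gives the quadratic $L_2^2 t^2 - L_2(2\alpha+2L_1)t + 2L_1\alpha \leq 0$, with discriminant $4(L_1^2+\alpha^2)$ and roots $u_\pm := \alpha+L_1\pm\sqrt{L_1^2+\alpha^2}$ in the variable $u = L_2 t$; elementary bounds show $u_- \leq \alpha$ and $u_+ \geq L_1-\alpha$, so the required interval sits inside the root set. In the second sub-range the LHS is monotone in $t$, so checking at $t_\star$ gives $L_1(L_1-3\alpha)/(L_1-\alpha)$, which is $\geq -3\alpha$ precisely when $L_1^2 \geq 3\alpha^2$; this is automatic since $L_1 > 2\alpha > \sqrt{3}\alpha$ in the sub-case where the sub-range is nonempty.

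The main technical obstacle is managing the regime transition and tracking which bound on $\lambda_{\min}(\nabla^2 f)$ is active. The cleanest organization is a three-way split by the relative size of $L_1$ and $\alpha$: when $L_1 \leq 2\alpha$ the entire range $t>r$ is smoothness-dominated and the one-line bound $-L_1 \geq -3\alpha$ (tight at $t=r$, where $\nabla^2\rho$ vanishes) suffices, and only when $L_1 > 2\alpha$ does one need the quadratic analysis, with the inequality $L_1 > 2\alpha > \sqrt{3}\alpha$ exactly rescuing the transition check.
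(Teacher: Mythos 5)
Your reconstruction is correct, and the first thing you do right is the most important: the display in the proposition is missing a square (without it $\rho$ is not even $C^1$, let alone $5L_1$-smooth), and the version appearing in Algorithm~\ref{alg:gnc-a3}, $\rho(\x)=L_1\bigl([\|\x-\x_0\|-\alpha/L_2]_+\bigr)^2$, is the one that makes the lemma true. Your Hessian formula, the Weyl decomposition, and the eigenvalue bounds $0\preceq\nabla^2\rho\preceq 2L_1 I$ are all right, and the case split by which lower bound on $\lambda_{\min}(\nabla^2 f)$ is active is the standard strategy. The one place where you work harder than necessary is the inner split at $t_\star=(L_1-\alpha)/L_2$ and the discriminant computation: if instead you split at $t=2L_1/L_2$ and substitute $s=t-r$, the middle case collapses to a one-liner. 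Specifically for $r<t\le 2L_1/L_2$ you have $\lambda_{\min}(\nabla^2 f)\ge -\alpha-L_2 t=-(2\alpha+L_2 s)$ and $\lambda_{\min}(\nabla^2\rho)=2L_1 s/t\ge L_2 s$ (since $2L_1/t\ge L_2$), so the sum is $\ge -2\alpha$; and for $t>2L_1/L_2$ one gets $r/t<\alpha/(2L_1)$, hence $\lambda_{\min}(\nabla^2\rho)>2L_1-\alpha$, which with $\nabla^2 f\succeq -L_1 I$ gives $\ge L_1-\alpha\ge -3\alpha$. This avoids the quadratic and the nested split on $L_1$ vs.\ $2\alpha$ entirely, and also shows that the constant $3\alpha$ is not tight. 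But your argument, including the $u_\pm$ root bounds and the check $L_1(L_1-3\alpha)/(L_1-\alpha)\ge-3\alpha\iff L_1^2\ge 3\alpha^2$, is all arithmetically correct, so the difference is purely one of economy, not validity.
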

The next result is a corollary of Theorem~\ref{lemma:GNC-A}, showing that by running $\xh_k = \text{NCG-A1}(\x_k, \epsilon_2^{3/2}, \epsilon_2, \delta')$ we obtain a solution $\xh_k$ around which $f(\x)$ is locally almost convex, i.e., $\nabla^2 f(\xh_k)\succeq -\epsilon_2 I$. 
\begin{corollary}
\label{cor:GNC-A}
The sub-routine $\xh_k = \text{NCG-A1}(\x_k, \epsilon_2^{3/2}, \epsilon_2, \delta')$ guarantees that 
\[
\lambda_{\min}(\nabla^2 f(\xh_k))\geq - \epsilon_2
\]
with at most $j_k$ iterations within NCG-A1, where
\begin{equation}\label{eqn:bound-GNC-A-cor}
j_k\leq 1 + \frac{\max(12L_2^2, 2L_1)}{\epsilon_2^3}(f(\x_k) - f(\xh_k))\leq 1 + \frac{\max(12L_2^2, 2L_1)}{\epsilon_2^3}\Delta,
\end{equation}
Furthermore, each iteration $j$ within NCG-A1 requires time at most 
\[
O\left(T_h\frac{\sqrt{L_1}}{\max(\epsilon_2, \|\nabla f(\x_j)\|)^{1/2}}\log\left(\frac{d}{\delta'}\right)\right)
\]
\end{corollary}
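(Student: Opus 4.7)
The plan is to derive this corollary as a direct specialization of Theorem~\ref{lemma:GNC-A} with the first accuracy parameter set to $\epsilon_1 = \epsilon_2^{3/2}$. The subroutine call $\text{NCG-A1}(\x_k, \epsilon_2^{3/2}, \epsilon_2, \delta')$ is literally the algorithm analyzed in Theorem~\ref{lemma:GNC-A} with $\epsilon_1$ replaced by $\epsilon_2^{3/2}$, so no new mechanics need to be introduced; the argument is entirely bookkeeping.

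First I would substitute $\epsilon_1 = \epsilon_2^{3/2}$ into the iteration bound~\eqref{eqn:bound-GNC-A}. Since $\epsilon_1^2 = \epsilon_2^3$, the maximum simplifies as
\[
\max\!\left(\frac{12 L_2^2}{\epsilon_2^3},\, \frac{2 L_1}{\epsilon_1^2}\right) \;=\; \frac{\max(12 L_2^2,\, 2 L_1)}{\epsilon_2^3},
\]
which collapses the theorem's bound to exactly~\eqref{eqn:bound-GNC-A-cor} in both the form involving $f(\x_k) - f(\xh_k)$ and the $\Delta$-bounded form obtained by using $f(\x_k) - f(\xh_k) \leq \Delta$.

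Next I would handle the second-order conclusion. Theorem~\ref{lemma:GNC-A} guarantees that upon termination $\lambda_{\min}(\nabla^2 f(\xh_k)) \geq -\max(\epsilon_2, \epsilon_1)$ with probability at least $1-\delta'$. In the regime of interest $\epsilon_2 \in (0,1)$, one has $\epsilon_1 = \epsilon_2^{3/2} \leq \epsilon_2$, so $\max(\epsilon_2, \epsilon_1) = \epsilon_2$ and we obtain $\lambda_{\min}(\nabla^2 f(\xh_k)) \geq -\epsilon_2$ as claimed. The per-iteration time complexity statement is simply inherited verbatim from the corresponding clause of Theorem~\ref{lemma:GNC-A}, since the inner NCG-step invocation with noise parameter $\max(\epsilon_2, \|\nabla f(\x_j)\|)/2$ is unchanged.

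There is no real obstacle here: the only point that deserves an explicit mention is the implicit assumption $\epsilon_2 \leq 1$ used to ensure $\epsilon_2^{3/2} \leq \epsilon_2$ so that the $\max$ reduces cleanly; this is consistent with the standing assumption $0 < \epsilon_2 < \min(L_1,1)$ used throughout the NCG-B analysis.
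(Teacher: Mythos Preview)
Your proposal is correct and matches the paper's approach exactly: the paper presents this result simply as a corollary of Theorem~\ref{lemma:GNC-A} without a separate proof, implicitly relying on the specialization $\epsilon_1 = \epsilon_2^{3/2}$ that you spell out. Your observation that $\epsilon_2 < 1$ is needed so that $\epsilon_2^{3/2} \le \epsilon_2$ (hence $\max(\epsilon_2,\epsilon_1)=\epsilon_2$) is the only nontrivial point, and it is indeed covered by the standing assumption $0<\epsilon_2<\min(L_1,1)$ used in the NCG-B analysis.
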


\begin{proof}[Proof of Theorem \ref{thm:gnc-a3:iteration}]
\begin{itemize}
\item First we try to bound the number of iterations in the Algorithm NCG-B1, which is actually the upper bound of the number of calls of both NCG-A1 and Almost-Convex-AGD.

	Define $\rho_{\alpha}(\x):=L_1\left[\|\x\|-\frac{\alpha}{L_2}\right]_+$.
	At iteration $k$ when $\|\nabla f(\widehat{\x}_k)\|\leq \epsilon_1$ is not met, which means $\|\nabla f(\widehat{\x}_k)\|>\epsilon_1$, we have
	\begin{align*}
		\epsilon_1<\|\nabla f(\widehat{\x}_k)\|\leq\left[\|\nabla f_{k-1}(\widehat{\x}_k)\|+\|\nabla\rho_{\epsilon_2}(\widehat{\x}_k-\widehat{\x}_{k-1})\|\right]\leq\frac{\epsilon_1}{2}+2L_1\left[\|\widehat{\x}_k-\widehat{\x}_{k-1}\|-\frac{\epsilon_2}{L_2}\right]_{+},
	\end{align*}
where the second inequality holds due to the triangle inequality, and the third inequality holds because of the guarantee provided by Almost-Convex-AGD at the previous stage. Therefore, we have
	\begin{align}
		\label{eqn:1}
		\frac{\epsilon_1}{4L_1}\leq\left[\|\widehat{\x}_k-\widehat{\x}_{k-1}\|-\frac{\epsilon_2}{L_2}\right]_{+}=\|\widehat{\x}_k-\widehat{\x}_{k-1}\|-\frac{\epsilon_2}{L_2}.
	\end{align}
	
	
	According to the inequality (\ref{eqn:1}), we know that at iteration $1<k\leq K$, exactly one of the following three cases is true:
	\begin{enumerate}[label=(\Roman*)]
		\item  $\|\nabla f(\widehat{\x}_k)\|\leq\epsilon_1$ and the Algorithm NCG-B1 terminates
		\item  $\|\nabla f(\widehat{\x}_k)\|>\epsilon_1$ (which implies that $\|\widehat{\x}_k-\widehat{\x}_{k-1}\|\geq\frac{\epsilon_2}{L_2}$ according to (\ref{eqn:1})), and $\widehat{\x}_k\neq \x_k$
		\item  $\|\nabla f(\widehat{\x}_k)\|>\epsilon_1$ and $\widehat{\x}_k=\x_k$
	\end{enumerate}
	If (II) holds, note that the subroutine NCG-A1 needs at least 2 iterations, so according to Theorem \ref{lemma:GNC-A}, we have 
	\begin{align*}
		\max\left(\frac{12L_2^2}{\epsilon_2^3},\frac{2L_1}{\epsilon_2^3}\right)\left(f(\x_k)-f(\widehat{\x}_k)\right)\geq 1.
	\end{align*}
	Combining it with the progressive bound (\ref{ineqn:progressive}) in Proposition \ref{prop:almostconvexagd}, we have
	\begin{align*}
		f(\widehat{\x}_{k-1})-f(\widehat{\x}_k)\geq f(\x_k)-f(\widehat{\x}_k)\geq \min\left(\frac{\epsilon_2^3}{12L_2^2},\frac{\epsilon_2^3}{2L_1}\right).
	\end{align*}
	If (III) holds, then by Proposition \ref{prop:almostconvexagd2} and the second-order guarantee provided by Theorem \ref{lemma:GNC-A}, we can know that, with probability at least $1-\delta'$, $f_k$ is $3\epsilon_2$-almost convex and $5L_1$-smooth. Then applying Proposition \ref{prop:almostconvexagd} suffices to show that 
	\begin{align*}
		f(\widehat{\x}_{k-1})-f(\widehat{\x}_k)&\geq\min\left\{3\epsilon_2\|\widehat{\x}_{k-1}-\x_k\|^2,\frac{\epsilon_1}{2\sqrt{10}}\|\widehat{\x}_{k-1}-\x_k\|\right\}\\
		&\geq
		\min\left\{\frac{3\epsilon_2^3}{L_2^2},\frac{\epsilon_1\epsilon_2}{2\sqrt{10}L_2}\right\}.
	\end{align*}
	Combing two cases (II) and (III) together, we get the conclusion that whether in case (II) or case (III), with probability at least $1-\delta'$,
	
\begin{align*}
	f(\widehat{\x}_{k-1})-f(\widehat{\x}_{k})\geq\min\left(\frac{\epsilon_2^3}{12L_2^2},\frac{\epsilon_2^3}{2L_1},\frac{\epsilon_1\epsilon_2}{2\sqrt{10}L_2}\right).
\end{align*}

In order to get a contradiction that after $K$ iterations the algorithm has not terminated yet, and by the definition of $\delta'$ and union bound, it follows that, with probability at least $1-\delta$,
\begin{align*}
	\Delta \geq f(\widehat{\x}_1)-f(\widehat{\x}_K)=\sum_{k=1}^{K-1}(f(\widehat{\x}_k)-f(\widehat{\x}_{k+1}))\geq (K-1)\min\left(\frac{\epsilon_2^3}{12L_2^2},\frac{\epsilon_2^3}{2L_1},\frac{\epsilon_1\epsilon_2}{2\sqrt{10}L_2}\right).
\end{align*}
Plugging in $K=\lceil1+\Delta\left(\frac{\max(12L_2^2,2L_1)}{\epsilon_2^3}+\frac{2\sqrt{10}L_2}{\epsilon_1\epsilon_2}\right)\rceil$ suffices to get a contradiction. Therefore the algorithm terminates after at most $K$ outer iterations.
\item 
Define $\tau=1+1/\epsilon+1/\delta+d+L_1+L_2+\Delta$. We try to bound the number of NCG steps. Denote $j_{k}$ by the total number of times the Algorithm NCG-A1 is executed during the iteration $k$ of the method NCG-B1, and define $k^*$ as the total number of outer iterations of the Algorithm NCG-B1. By telescoping bound (\ref{eqn:bound-GNC-A-cor}) and the progressive bound (\ref{ineqn:progressive}) of Proposition \ref{prop:almostconvexagd} in Almost-Convex-AGD, which guarantees the Almost-Convex-AGD decreases the function values, we have
\begin{align*}
	\sum_{k=1}^{k^*}(j_{k}-1)&\leq\sum_{k=1}^{k^*}\max\left(\frac{12L_2^2}{\epsilon_2^3},\frac{2L_1}{\epsilon_2^3}\right)(f(\x_k)-f(\widehat{\x}_k))\\
	&\leq \sum_{k=1}^{k^*}\max\left(\frac{12L_2^2}{\epsilon_2^3},\frac{2L_1}{\epsilon_2^3}\right)(f(\widehat{\x}_{k-1})-f(\widehat{\x}_k))\\
	&\leq \max\left(\frac{12L_2^2}{\epsilon_2^3},\frac{2L_1}{\epsilon_2^3}\right)\Delta.	
\end{align*}
According to the previous result, with probability at least $1-\delta$, we can have a upper bound of $k^*$, which is
\begin{align}
	\label{k:upperbound}
	k^*\leq 2+\Delta\left(\frac{12L_2^2}{\epsilon_2^3}+\frac{2L_1}{\epsilon_2^3}+\frac{2\sqrt{10}L_2}{\epsilon_1\epsilon_2}\right).
\end{align}
Hence, we have with probability at least $1-\delta$,
\begin{align}
	\label{bound:GNC}
	\sum_{k=1}^{k^*}j_k=k^*+\sum_{k=1}^{k^*}(j_k-1)\leq 2+\Delta\left(\frac{24L_2^2}{\epsilon_2^3}+\frac{4L_1}{\epsilon_2^3}+\frac{2\sqrt{10}L_2}{\epsilon_1\epsilon_2}\right). 
	\end{align}
According to Corollary~\ref{cor:GNC-A}, we have the cost of each iteration $t$ within NCG-A1  is
\begin{align*}
	O\left(T_h\frac{\sqrt{L_1}}{\max\left(\epsilon_2,\|\nabla f(\x_t)\|\right)^{1/2}}\log\left(\frac{d}{\delta'}\right)\right).
\end{align*}
Note that the failure probability satisfies
\begin{align*}
	\frac{1}{\delta'}\leq\frac{2+\Delta\left(\frac{12L_2^2}{\epsilon_2^3}+\frac{2L_1}{\epsilon_2^3}+\frac{2\sqrt{10}L_2}{\epsilon_1\epsilon_2}\right)}{\delta},
\end{align*}
so $\log\frac{d}{\delta'}=O(\log\tau)$.
Then we employ (\ref{bound:GNC}) to bound the worst-case total costs of NCG-A1, which is
\begin{align}
\label{GNC-A3:c1}
	O\left(T_h\frac{\sqrt{L_1}}{\epsilon_2}\left[2+\Delta\left(\frac{24L_2^2}{\epsilon_2^3}+\frac{4L_1}{\epsilon_2^3}+\frac{2\sqrt{10}L_2}{\epsilon_1\epsilon_2}\right)\right]\log \tau\right)
\end{align}
Now we analyze the total cost of calling Almost-Convex-AGD. Employing the bound (\ref{ineqn:almostconvexagdtime}) in Proposition \ref{ineqn:almostconvexagdtime}, the cost of calling Almost-Convex-AGD in iteration $k$ with almost convexity parameter $3\epsilon_2$ is bounded by 
\begin{align*}
	O\left(T_g\left(\sqrt{\frac{L_1}{3\epsilon_2}}+\frac{4\sqrt{3\epsilon_2L_1}}{\epsilon_1^2}[f_k(\x_k)-f_k(\x_{k+1})]\right)\log\tau\right).
	\end{align*}
Note that $f_k(\x_k)-f_k(\x_{k+1})\leq f(\x_k)-f(\x_{k+1})$, so we have
\begin{align*}
	\sum_{k=1}^{k^*}\left[f_k(\x_k)-f_k(\x_{k+1})\right]\leq\sum_{k=1}^{k^*}[f(\x_k)-f(\x_{k+1})]\leq\Delta.
\end{align*}
According to (\ref{k:upperbound}), we can get that the total time complexity of Almost-Convex-AGD is 
\begin{align}
\label{GNC-A3:c2}
	O\left(T_g\left[\sqrt{\frac{L_1}{3\epsilon_2}}\left(2+\Delta\left(\frac{24L_2^2}{\epsilon_2^3}+\frac{4L_1}{\epsilon_2^3}+\frac{2\sqrt{10}L_2}{\epsilon_1\epsilon_2}\right)\right)+\frac{4\sqrt{3\epsilon_2L_1}}{\epsilon_1^2}\Delta\right]\log\tau\right).
\end{align}
According to (\ref{GNC-A3:c1}) and (\ref{GNC-A3:c2}), and note that $T_g\leq T_h$, we get that the worst case complexity bound is
$$\widetilde O\left(\left(\frac{1}{\epsilon_1\epsilon_2^{3/2}} +\frac{1}{\epsilon_2^{7/2}}\right) T_h+\frac{\epsilon_2^{1/2}}{\epsilon_1^2}T_g\right),$$
where $\widetilde{O}(\cdot)$ hides a $\log\tau$ factor.
\end{itemize}
\end{proof}

\section{Proof of Theorem \ref{thm:gnc-a3':iteration}}
The proof is similar to that of Theorem \ref{thm:gnc-a3:iteration} by observing the following result, which is a corollary of Theorem~\ref{thm:gnc-a2:iteration}. 
\begin{corollary}
\label{cor:GNC-A2}
The sub-routine $\xh_k = \text{NCG-A2}(\x_k, \epsilon_1^{3\alpha/2}, \frac{2}{3}, \delta')$ guarantees that 
\[
\lambda_{\min}(\nabla^2 f(\xh_k))\geq - \epsilon_2
\]
with at most $j_k$ iterations within NCG-A2, where
\begin{equation*}
j_k\leq 1 + \frac{\max(12L_2^2, 2L_1)}{\epsilon_1^{3\alpha}}(f(\x_k) - f(\xh_k))\leq 1 + \frac{\max(12L_2^2, 2L_1)}{\epsilon_1^{3\alpha}}\Delta,
\end{equation*}
Furthermore, each iteration $j$ within NCG-A2 requires time at most 
\[
O\left(T_h\frac{\sqrt{L_1}}{\max(\epsilon_2, \|\nabla f(\x_j)\|^{2/3})^{1/2}}\log\left(\frac{d}{\delta'}\right)\right)
\]
\end{corollary}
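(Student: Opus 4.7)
The plan is to view this as a direct corollary of Theorem~\ref{thm:gnc-a2:iteration} by substituting the specific parameter values passed into the NCG-A2 call. The NCG-A2 subroutine has signature $(\x_0,\epsilon_1,\alpha,\delta)$ and internally sets its local second-order tolerance to $\epsilon_1^\alpha$. In the call $\text{NCG-A2}(\x_k,\epsilon_1^{3\alpha/2},2/3,\delta')$, the \emph{local} parameters are therefore $\epsilon_1^{\text{loc}}=\epsilon_1^{3\alpha/2}$, $\alpha^{\text{loc}}=2/3$, and
\[
\epsilon_2^{\text{loc}}=(\epsilon_1^{\text{loc}})^{\alpha^{\text{loc}}}=(\epsilon_1^{3\alpha/2})^{2/3}=\epsilon_1^\alpha=\epsilon_2.
\]
This key identity is what makes the whole corollary line up: the local second-order tolerance of the inner call matches the outer $\epsilon_2$.

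With this substitution, I would simply specialize the three conclusions of Theorem~\ref{thm:gnc-a2:iteration}. For the iteration count, Theorem~\ref{thm:gnc-a2:iteration} gives $j_*\le 1+\max(12L_2^2/(\epsilon_1^{\text{loc}})^{3\alpha^{\text{loc}}},\,2L_1/(\epsilon_1^{\text{loc}})^2)(f(\x_k)-f(\xh_k))$; since $(\epsilon_1^{3\alpha/2})^{3\cdot 2/3}=\epsilon_1^{3\alpha}$ and $(\epsilon_1^{3\alpha/2})^2=\epsilon_1^{3\alpha}$, both exponents collapse to the same value and the two terms in the max share the denominator $\epsilon_1^{3\alpha}$, yielding
\[
j_k\le 1+\frac{\max(12L_2^2,2L_1)}{\epsilon_1^{3\alpha}}(f(\x_k)-f(\xh_k))\le 1+\frac{\max(12L_2^2,2L_1)}{\epsilon_1^{3\alpha}}\Delta,
\]
using $f(\x_k)-f(\xh_k)\le\Delta$ in the last step. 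I keep the tighter $(f(\x_k)-f(\xh_k))$ form (rather than $\Delta$) exactly as in the proof of Theorem~\ref{thm:gnc-a2:iteration}, which is needed later when telescoping across outer iterations of NCG-B2. The proof there already establishes this sharper intermediate bound before relaxing to $\Delta$, so no extra work is required.

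For the second-order guarantee, Theorem~\ref{thm:gnc-a2:iteration} certifies $\lambda_{\min}(\nabla^2 f(\xh_k))\ge -(\epsilon_1^{\text{loc}})^{\alpha^{\text{loc}}}=-(\epsilon_1^{3\alpha/2})^{2/3}=-\epsilon_1^\alpha=-\epsilon_2$, with probability at least $1-\delta'$. For the per-iteration runtime, substituting into the last display of Theorem~\ref{thm:gnc-a2:iteration} gives
\[
O\!\left(T_h\frac{\sqrt{L_1}}{\max(\epsilon_1^\alpha,\|\nabla f(\x_j)\|^{2/3})^{1/2}}\log\!\left(\frac{d}{\delta'}\right)\right)=O\!\left(T_h\frac{\sqrt{L_1}}{\max(\epsilon_2,\|\nabla f(\x_j)\|^{2/3})^{1/2}}\log\!\left(\frac{d}{\delta'}\right)\right),
\]
which is exactly the stated per-iteration bound. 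There is no genuine obstacle here; the only thing to keep straight is the change of variables between the outer-loop parameters of NCG-B2 and the inner-call parameters of NCG-A2, and in particular the identity $(\epsilon_1^{3\alpha/2})^{2/3}=\epsilon_1^\alpha$ that makes $\alpha^{\text{loc}}=2/3$ the right choice.
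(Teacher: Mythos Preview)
Your proposal is correct and matches the paper's approach: the paper explicitly labels this result as ``a corollary of Theorem~\ref{thm:gnc-a2:iteration}'' and offers no further argument, so your substitution $\epsilon_1^{\mathrm{loc}}=\epsilon_1^{3\alpha/2}$, $\alpha^{\mathrm{loc}}=2/3$, $(\epsilon_1^{\mathrm{loc}})^{\alpha^{\mathrm{loc}}}=\epsilon_1^\alpha=\epsilon_2$ is exactly the intended derivation. The observation that both $(\epsilon_1^{3\alpha/2})^{3\cdot 2/3}$ and $(\epsilon_1^{3\alpha/2})^2$ collapse to $\epsilon_1^{3\alpha}$ is the only nontrivial arithmetic, and you handle it correctly.
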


\section{Proof of Lemma \ref{lem:NCG-iH}}
\begin{proof}
Define $\eta_j=\frac{\epsilon_2}{L_2}\text{sign}(\v_j^\top\nabla f(\x_j))$, and then by $L_2$-Lipschitz continuity of Hessian and the fact that $\eta_j\v_j^\top\nabla f(\x_j)\geq 0$, we know that
\begin{align*}
	f(\x_{j+1}^1)-f(\x_j)\leq\frac{\eta_j^2}{2}\v_j^\top \left(\nabla^2 f(\x_j)-H(\x_j)\right)\v_j+\frac{\eta_j^2}{2}\v_j^\top H(\x_j)\v_j+\frac{L_2}{6}|\eta_j|^3 .
	\end{align*}
	where $\x_{j+1}^1$ is an update of $\x_j$ following $\v_j$ in NCG-iH. 
By the condition  $\v_j^\top \left(\nabla^2 f(\x_j)-H(\x_j)\right)\v_j\leq \epsilon_3\leq \epsilon_2/12$, we can derive that
\begin{align}
	f(\x^1_{j+1})-f(\x_j)\leq \frac{\epsilon_2^3}{24L_2^2}+\frac{\epsilon_2^2\v_j^\top H(\x_j)\v_j}{2L_2^2}+\frac{\epsilon_2^3}{6L_2^2}=-\underbrace{\left(\frac{-\epsilon_2^2\v_j^\top H(\x_j)\v_j}{2L_2^2} - \frac{5\epsilon_2^3}{24L_2^2}\right)}\limits_{\Delta_1}.
\end{align}
Similarly, let $\x^2_{j+1}$ denote an update of $\x_j$ following $\nabla f(\x_j)$ in NCG-iH, we have
\begin{align}
	f(\x^2_{j+1})-f(\x_j)\leq -\underbrace{\frac{\|\nabla f(\x_j)\|^2}{2L_1}}\limits_{\Delta_2}.
\end{align}
According to the update of NCG-iH, if $\Delta_1>\Delta_2$, we have $\x_{j+1} = \x^1_{j+1}$ and then $f(\x_{j}) - f(\x_{j+1})\geq \Delta_1 = \max(\Delta_1, \Delta_2)$. If $\Delta_2\geq \Delta_1$, we have $\x_{j+1} = \x^2_{j+1}$ and then $f(\x_{j}) - f(\x_{j+1})\geq \Delta_2 = \max(\Delta_1, \Delta_2)$, which finishes the proof of the first part.   When $\v_j^\top H(\x_j)\v_j\leq -\epsilon_2/2$, we have $\Delta_1\geq \frac{\epsilon_2^3}{24L_2^2}$ and
\begin{align*}
f(\x_j) - f(\x_{j+1})\geq \max\left(\frac{\|\nabla f(\x_j)\|^2}{2L_1}, \frac{\epsilon_2^3}{24L_2^2}\right)
\end{align*}

\end{proof}
\section{Proof of Lemma~\ref{lem:approxPCA}}
We first introduce a proposition, which is the Theorem 2.5 in~\cite{DBLP:conf/stoc/AgarwalZBHM17}.
\begin{prop}
\label{agarwal:thm2.5}
Let $M\in\R^{d\times d}$ be a symmetric matrix with eigenvalues $1\geq\lambda_1\ldots\geq\lambda_d\geq 0$. Then with probability at least $1-p$, the Algorithm AppxPCA produces a unit vector $\v$ such that $\v^\top M\v\geq(1-\delta_+)(1-\epsilon)\lambda_{\text{max}}(M)$. The total running time is $\widetilde O\left(T_h^1\max\{m,\frac{m^{3/4}}{\sqrt{\epsilon}}\}\log^2\left(\frac{1}{\epsilon^2\delta_+}\right)\right)$.
\end{prop}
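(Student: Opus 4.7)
The plan is to establish the guarantee for AppxPCA via a shift-and-invert reduction, which converts top-eigenvector computation into a sequence of (approximate) linear-system solves that can be handled by a fast variance-reduced solver. Concretely, I would pick a shift parameter $\lambda$ slightly above $\lambda_{\max}(M)$ and run (approximate) power iteration on the preconditioned operator $B_\lambda := (\lambda I - M)^{-1}$, whose top eigenvector coincides with that of $M$ but whose spectral gap is amplified from roughly $\epsilon\lambda_{\max}$ to a constant factor. This is the same paradigm as in Garber--Hazan--Jin--Kakade--Musco--Netrapalli--Sidford and in Agarwal--Allen-Zhu--Bullins--Hazan--Ma.

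First I would set up the shift-and-invert preconditioning: choose a geometric grid of candidate shifts $\lambda_k = (1 + c\epsilon)^k \cdot \widehat{L}$, where $\widehat L$ is a rough upper bound for $\|M\|_2$; a binary/doubling search over $O(\log(1/\epsilon))$ such shifts suffices to locate one satisfying $\lambda_{\max}(M) \le \lambda \le (1+O(\epsilon))\lambda_{\max}(M)$. For each such $\lambda$, I would run $T = O(\log(d/p\delta_+))$ iterations of inexact power method on $B_\lambda$ starting from a Gaussian random vector $\mathbf u_0$; a standard anti-concentration argument shows that with probability $1-p/2$, $\langle \mathbf u_0, \mathbf v_1\rangle = \Omega(1/\sqrt d)$, which is absorbed into the log factors. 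The eigenvalue-gap amplification guarantees that $T=\widetilde O(1)$ power iterations reduce the component along every non-leading eigen-direction by a constant factor per step, yielding a vector $\v$ with $\v^\top M\v \ge (1-\delta_+)(1-\epsilon)\lambda_{\max}(M)$ on termination.

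Next I would implement each power step, $\mathbf x \mapsto (\lambda I - M)^{-1}\mathbf x$, as the minimizer of the convex quadratic $\min_{\mathbf y} \tfrac12 \mathbf y^\top (\lambda I - M)\mathbf y - \mathbf x^\top\mathbf y$, which has finite-sum structure $\tfrac1m\sum_{i}[\tfrac12\mathbf y^\top(\lambda I - H_i)\mathbf y]$ with condition number $\kappa = \lambda/(\lambda - \lambda_{\max}(M)) = O(1/\epsilon)$. Solving this strongly-convex quadratic to the accuracy demanded by the inexact-power-method analysis can be done using an accelerated variance-reduced method such as Katyusha or accelerated SVRG, whose runtime is $\widetilde O(T_h^1(m + \sqrt{m\kappa})) = \widetilde O(T_h^1(m + \sqrt{m/\epsilon}))$; a sharper accelerated catalyst-style analysis (as in Frostig--Ge--Kakade--Sidford or Allen-Zhu) improves the second term to $\widetilde O(T_h^1 \cdot m^{3/4}/\sqrt\epsilon)$, giving the claimed $\max\{m, m^{3/4}/\sqrt\epsilon\}$ factor. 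Multiplying by the $O(\log(1/\epsilon))$ shifts, $O(\log(d/p))$ power steps, and the $\log^2(1/(\epsilon^2\delta_+))$ precision overhead absorbed into $\widetilde O(\cdot)$ yields the stated runtime, and a union bound over the $\text{polylog}$ many failure events gives the $1-p$ success probability.

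The main obstacle will be tracking the propagation of inexactness: each linear solve produces only an approximate $\widetilde B_\lambda \mathbf x$, and one must show that setting the per-solve target accuracy to $\mathrm{poly}(\epsilon,\delta_+,1/d)$ is enough for inexact power iteration to still converge to a vector achieving the multiplicative guarantee $(1-\delta_+)(1-\epsilon)\lambda_{\max}$. Handling this cleanly requires the inexact-power-method analysis of Hardt--Price (applied to $B_\lambda$), together with a careful choice of the shift schedule so that the amplified gap remains $\Omega(1)$ across all iterations; this is where the extra $\log^2(1/(\epsilon^2\delta_+))$ factor arises. Once that bookkeeping is in place, the bound follows by combining the $\widetilde O(\cdot)$ per-solve cost with the constant number of outer iterations.
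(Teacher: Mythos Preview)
Your sketch is essentially the shift-and-invert-plus-accelerated-SVRG argument from \cite{DBLP:conf/stoc/AgarwalZBHM17} (and \cite{DBLP:conf/icml/GarberHJKMNS16}), and the outline is correct in broad strokes. However, note that the paper does \emph{not} prove this proposition at all: it is imported verbatim as Theorem~2.5 of \cite{DBLP:conf/stoc/AgarwalZBHM17} and used as a black box in the proof of Lemma~\ref{lem:approxPCA}. So there is nothing to compare against; what you wrote is a (reasonable) reconstruction of the original source's argument, not an alternative to the present paper's proof.

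One minor caution on the content of your sketch: the improvement of the linear-solve cost from $m+\sqrt{m/\epsilon}$ to $\max\{m,\,m^{3/4}/\sqrt{\epsilon}\}$ is not obtained by ``a sharper catalyst-style analysis'' of the same accelerated SVRG; it comes from a specific accelerated variance-reduced solver (e.g., Katyusha or the shift-and-invert solver in \cite{DBLP:conf/nips/ZhuL16}) whose complexity for a $\kappa$-conditioned finite sum is $\widetilde O(m+m^{3/4}\sqrt{\kappa})$. If you want to make the argument self-contained, you should invoke that bound directly rather than leave it as a remark.
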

\begin{proof}[Proof of Lemma~\ref{lem:approxPCA}]
Define $M=I-\frac{H}{L_1}$, then $M$ satisfies the condition in the Proposition \ref{agarwal:thm2.5}. Then we know that with probability at least $1- p$, the Algorithm AppxPCA produces a vector $\v$ satisfying
\begin{align*}
    \v^\top\left(I-\frac{H}{L_1}\right)\v\geq(1-\delta_+)(1-\epsilon)\left(1-\frac{\lambda_{\text{min}}(H)}{L_1}\right),
\end{align*}
which implies that
\begin{align*}
L_1-\v^\top H\v\geq (1-\delta_+ -\epsilon + \delta_+\epsilon)(L_1-\lambda_{\text{min}}(H))\geq (1-\delta_+ -\epsilon)(L_1-\lambda_{\text{min}}(H)).
\end{align*}
By simple algebra, we have
\begin{align*}
    \lambda_{\text{min}}(H)&\geq\v^\top H\v-(\delta_++\epsilon)(L_1-\lambda_{\text{min}}(H))\\
    &\geq\v^\top H\v-2L_1(\delta_++\epsilon).
\end{align*}
By setting $\epsilon=\delta_+=\frac{\varepsilon}{4L_1}$, we can finish the proof.  
\end{proof}

\section{Proof of Theorem \ref{thm:GNC-A-stoc}}
\begin{proof}
Define $\eta_j=\frac{\epsilon_2}{L_2}\text{sign}(\v_j^\top\g(\x_j))$. Next, we analyze the objective decrease for $j$-th  NCG-S step conditioned on the event $\mathcal A=\{\|H(\x_j) - \nabla^2 f(\x_j)\|_2\leq \epsilon_3 \cap \|g(\x_j) - \nabla f(\x_j)\|\leq \epsilon_4\}$. 

By $L_2$-Lipschitz continuity of Hessian, 
we know that
\begin{align*}
	f(\x_{j+1}^1)-f(\x_j)\leq -\eta_j\nabla f(\x_j)^{\top}\v_j + \frac{\eta_j^2}{2}\v_j^\top \left(\nabla^2 f(\x_j)-H(\x_j)\right)\v_j+\frac{\eta_j^2}{2}\v_j^\top H(\x_j)\v_j+\frac{L_2}{6}|\eta_j|^3 .
	\end{align*}
	where $\x_{j+1}^1$ is an update of $\x_j$ following $\v_j$ in NCG-S. 
By the assumption $\epsilon_4\leq \epsilon_2^2/24L_2$ we have
\begin{align}
    & -\eta_j\nabla f(\x_j)^{\top}\v_j =  -\eta_j \g(\x_j)^{\top}\v_j + \eta_j(\g(\x_j) - \nabla f(\x_j))^{\top}\v_j\leq |\eta_j\epsilon_4| \leq \frac{\epsilon_2^3}{24L_2^2}\\
    & \v_j^\top \left(\nabla^2 f(\x_j)-H(\x_j)\right)\v_j\leq \epsilon_3\leq \epsilon_2/24
\end{align}
Then it follows that
\begin{align}
	f(\x^1_{j+1})-f(\x_j)\leq \frac{\epsilon_2^3}{24L_2^2}+\frac{\epsilon_2^3}{48L_2^2}+\frac{\epsilon_2^2\v_j^\top H(\x_j)\v_j}{2L_2^2}+\frac{\epsilon_2^3}{6L_2^2}=-\underbrace{\left(\frac{-\epsilon_2^2\v_j^\top H(\x_j)\v_j}{2L_2^2} - \frac{11\epsilon_2^3}{48L_2^2}\right)}\limits_{\Delta_1}.
\end{align}
Similarly, let $\x^2_{j+1}$ denote an update of $\x_j$ following $\g(\x_j)$ in NCG-S,  we have
\begin{align*}
	f(\x^2_{j+1})-f(\x_j)&\leq (\x^2_{j+1} - \x_j)^{\top}\nabla f(\x_j) + \frac{L_1}{2}\|\x^2_{j+1} - \x_j\|^2\\
	&= -\frac{1}{L_1}\g(\x_j)^{\top}\nabla f(\x_j) + \frac{\|\g(\x_j)\|^2}{2L_1}\\
	&=-\frac{1}{L_1}\g(\x_j)^{\top}\g(\x_j) + \frac{1}{L_1}\g(\x_j)^{\top}(\g(\x_j) - \nabla f(\x_j)) +  \frac{\|\g(\x_j)\|^2}{2L_1}\\
	&\leq - \frac{1}{2L_1}\|\g(\x_j)\|^2  + \frac{1}{4L_1}\|\g(\x_j)\|^2 + \frac{1}{L_1}\|\g(\x_j) - \nabla f(\x_j)\|^2\\
	&= - \frac{1}{4L_1}\|\g(\x_j)\|^2  + \frac{1}{L_1}\epsilon_4^2\leq - \underbrace{\frac{1}{4L_1}\|\g(\x_j)\|^2  + \frac{\epsilon_1^2}{8L_1}}\limits_{-\Delta_2}\\
\end{align*}
where we use $\epsilon_4\leq \frac{1}{2\sqrt{2}}\epsilon_1$. 
According to the update of NCG-S, if $\Delta_1>\Delta_2$, we have $\x_{j+1} = \x^1_{j+1}$ and then $f(\x_{j}) - f(\x_{j+1})\geq \Delta_1 = \max(\Delta_1, \Delta_2)$. If $\Delta_2\geq \Delta_1$, we have $\x_{j+1} = \x^2_{j+1}$ and then $f(\x_{j}) - f(\x_{j+1})\geq \Delta_2 = \max(\Delta_1, \Delta_2)$. Therefore, with probability $1- \delta'$ we have, 
\begin{align*}
f(\x_j) - f(\x_{j+1})\geq \max\left(\frac{1}{4L_1}\|\g(\x_j)\|^2   -  \frac{\epsilon_1^2}{8L_1}, \frac{-\epsilon_2^2\v_j^\top H(\x_j)\v_j}{2L_2^2} - \frac{11\epsilon_2^3}{48L_2^2}\right)
\end{align*}
If $\v_j^\top H(\x_j)\v_j\leq -\epsilon_2/2$, we have $\Delta_1\geq \frac{\epsilon_2^3}{48L_2^2}$ and
\begin{align*}
f(\x_j) - f(\x_{j+1})\geq  \frac{\epsilon_2^3}{48L_2^2}
\end{align*}
If $\|\g(\x_j)\|> \epsilon_1$, we have $\Delta_2\geq \frac{\epsilon_1^2}{8L_1}$ and 
\begin{align*}
f(\x_j) - f(\x_{j+1})\geq  \frac{\epsilon_1^2}{8L_1}
\end{align*}
Therefore, before the algorithm terminates, i.e., for all iterations $j\leq j_* -1$, we have either $\v_j^\top H(\x_j)\v_j\leq -\epsilon_2/2$ or $\|\g(\x_j)\|> \epsilon_1$. In either case, the following holds with probability $1-\delta'$
\begin{align*}
f(\x_j) - f(\x_{j+1})\geq  \min\left(\frac{\epsilon_1^2}{8L_1}, \frac{\epsilon_2^3}{48L_2^2}\right),
\end{align*}
from which we can derive the upper bound of $j_*$ as in the theorem. Next, we show that upon termination, we achieve an $(2\epsilon_1, 2\epsilon_2)$-second order stationary point with high probability. In particular, with probability $1-\delta'$ we have
\[
\|\nabla f(\x_{j_*})\|\leq \|\nabla f(\x_{j_*}) - \g(\x_{j_*})\| + \|\g(\x_{j_*})\| \leq \epsilon_4  + \epsilon_1 \leq 2\epsilon_1. 
\]
and  with probability $1-\delta'$
\begin{align*}
\lambda_{\min}(H(\x_{j_*}))\geq \v_{j_*}^{\top}H(\x_{j_*})\v_{j_*} - \max(\epsilon_2, \|\g(\x_{j_*})\|^\alpha)/2 \geq -\epsilon_2 
\end{align*}
In addition, with probability $1-\delta'$, we have
\[
\lambda_{\min}(\nabla^2 f(\x_{j_*}))\geq \lambda_{\min}(H(\x_{j_*})) - \epsilon_3 
\]
As a result, by using union bound, we have with probability $1-3j_*\delta'=1-3\delta$, we have
\begin{align*}
  \|\nabla f(\x_{j_*})\|\leq2\epsilon_1, \quad   \lambda_{\min}(\nabla^2 f(\x_{j_*}))\geq -2\epsilon_2
\end{align*}
Finally, the time complexity of each iteration follows Lemma~\ref{lem:approxPCA}. 
\end{proof}


\end{document}